\documentclass[11pt,a4paper,twoside]{article}
\usepackage[utf8]{inputenc}
\usepackage{amsmath,amsfonts,amssymb,amsthm,bbm,latexsym,mathrsfs}
\usepackage{graphicx,color,epsfig,fancyhdr,dsfont}
\usepackage{enumerate}
\usepackage{hyperref}
\usepackage{indentfirst}
\usepackage[all]{hypcap}
\usepackage{placeins}
\usepackage[affil-it]{authblk}
\usepackage{color}
\usepackage[includeheadfoot,margin=3.5cm]{geometry}
\usepackage{subcaption}

\textwidth 16cm                 
\textheight 23.5cm                
\evensidemargin 6mm             

\oddsidemargin 6mm              
\topmargin 10mm                  

\hoffset=-0.7truecm
\voffset=-1.6truecm

%
%
%
%


\allowdisplaybreaks

\newtheorem{theorem}{Theorem}[section]
\newtheorem{definition}[theorem]{Definition}
\newtheorem{lemma}[theorem]{Lemma}
\newtheorem{corollary}[theorem]{Corollary}
\newtheorem{proposition}[theorem]{Proposition}
\newtheorem{remark}[theorem]{Remark}
\newtheorem{example}[theorem]{Example}

%


\numberwithin{equation}{section}

\newcommand{\N}{{\mathbb{N}}}                     
\newcommand{\Z}{{\mathbb{Z}}}                     
\newcommand{\R}{{\mathbb{R}}}                     
\newcommand{\E}{{\mathbb{E}}}                     
\newcommand{{\X}}{{\mathbb{X}}}                     
\newcommand{\cL}{{\mathcal{L}}}                   
\newcommand{\apm}{{\bar{\tilde{\rho}}}}           

\newcommand{\dt}{{\Delta t}}
\newcommand{\dW}{{\Delta W}}

\newcommand{\norm}[1]{\left\lVert#1\right\rVert}
\newcommand{\abs}[1]{\left|#1\right|}
\newcommand{\rbrac}[1]{\left(#1\right)}
\newcommand{\sbrac}[1]{\left[#1\right]}

\newcommand{\hx}{\widehat{X}}
\newcommand{\hxit}[1]{\widehat{X}_{-k\tau+#1\Delta t}^{-k\tau}}

\title{Ergodic Numerical Approximation to Periodic Measures of Stochastic Differential Equations}

\author[1]{Chunrong Feng}
\author[1]{Yu Liu}
\author[1,2]{Huaizhong Zhao}
\affil[1]{Department of Mathematical Sciences, Durham
	University, DH1 3LE, UK}
\affil[2]{Research Centre for Mathematics and Interdisciplinary Sciences, Shandong University, Qingdao, Shandong, 266237, China}
\affil[ ]{chunrong.feng@durham.ac.uk, yu.liu3@durham.ac.uk, huaizhong.zhao@durham.ac.uk}
\date{}



\begin{document}
	
	\maketitle
	
\begin{abstract}
	In this paper, we consider numerical approximation to periodic measure of a time periodic stochastic differential equations (SDEs) under weakly dissipative condition. For this we first study the existence of the periodic measure $\rho_t$ and the large time behaviour of $\mathcal{U}(t+s,s,x):=\E\phi(X_{t}^{s,x})-\int\phi d\rho_t,$ where $X_t^{s,x}$ is the solution of the SDEs and $\phi$ is a test function being smooth and of polynomial growth at infinity. We prove $\mathcal{U}$ and all its spatial derivatives decay to 0 with exponential rate on time $t$ in the sense of average on initial time $s$. We also prove the existence and the geometric ergodicity of the periodic measure of the discretized semi-flow from the Euler-Maruyama scheme and moment estimate of any order when the time step is sufficiently small (uniform for all orders). We thereafter obtain that the weak error for the numerical scheme of infinite horizon is of the order $1$ in terms of the time step. We prove that the choice of step size can be uniform for all test functions $\phi$. Subsequently we are able to estimate the average periodic measure with ergodic numerical schemes.

	\noindent
	{\bf Keywords:} Periodic measure; Fokker-Planck equation; discretized semi-flows; geometrical ergodicity; weak approximation.
	\medskip
	
	\noindent 
	{\bf Mathematics Subject Classifications (2010): 37H99, 60H10, 60H35.}
\end{abstract}

\pagestyle{fancy}
\fancyhf{}
\fancyhead[LE,RO]{\thepage}
\fancyhead[LO]{\small{Ergodic Approximation to Periodic Measures of SDEs}}
\fancyhead[RE]{\small{C. Feng, Y. Liu and H. Zhao}}


\section{Introduction}
Random periodicity is ubiquitous in the real world from daily temperature process to economic cycles. The concepts of random periodic paths and periodic measures were introduced  and their ergodicity was obtained recently (\cite{Feng-Wu-Zhao2016},\cite{Feng-Zhao2012},\cite{Feng-Zhao2018},\cite{Feng-Zhao-Zhou2011},\cite{Zhao-Zheng2009}). They are two different indispensable ways in the pathwise sense and in distributions 
to describe random periodicity.  The ``equivalence'' of the random periodic solutions and periodic measures and their characterisation in terms of purely imaginary eigenvalues of the infinitesimal 
generator of the Markovian semi-group were obtained in \cite{Feng-Zhao2018}. The presence of pure imaginary eigenvalues distinguishes the random periodic processes/periodic measures regime from that of the stationary processes/mixing invariant measures, in the latter case the 
Koopman-von Neumann Theorem says the infinitesimal generator has a unique eigenvalue $0$ on the imaginary axis.

As in the case of deterministic dynamical systems where periodic motion has been in the central stage of its study, the relevance of random periodic paths and periodic measures to theoretical and applied problems arising in stochastic dynamical systems has begun to be realised. In particular, there has been progress in the study of some topics in stochastic dynamics e.g. 
bifurcations (Wang \cite{Wang2014}), random attractors (Bates, Lu and Wang \cite{Bates-Lu-Wang2014}), 
stochastic resonance (Cherubini, Lamb, Rasmussen and Sato \cite{Cherubini-Lamb-Rasmussen-Sato2017}, Feng, Zhao and Zhong \cite{Feng-Zhao-Zhong2019measure},\cite{Feng-Zhao-Zhong2019resonance}), random horseshoes (Huang, Lian and Lu \cite{Huang-Lian-Lu2019}), modelling the El N\^ino phenomenon (Chekroun, Simonnet and Ghil \cite{Chekroun-Simonnet-Ghil2011}), isochronicity of stochastic oscillations (Engel and Kuehn \cite{Engel-Kuehn2019}), and invariant measures of quasi-periodic stochastic systems (Feng, Qu and Zhao \cite{Feng-Qu-Zhao}).

However, it is difficult to construct random periodic solutions explicitly for many problems. So numerical approximation is critical in the study of stochastic dynamics in addition to the study of random periodic dynamics theory. There are numerous works on numerical analysis of SDEs on a finite horizon 
(\cite{Kloeden-Platen1991},\cite{Milstein1995},\cite{Jentzen-Kloeden2010},\cite{Milstein-Tretyakov2004}). 
A numerical analysis of approximation to invariant measures of SDEs through discretizing the pull-back,  
was given in \cite{mattingly-stuart-higham},\cite{Talay1990},\cite{Talay-Tubaro1990},\cite{Tocino-Ardanuy2002},\cite{Yevik-Zhao2011},\cite{Yuan-Mao2004}. 
Numerical approximations to stable zero solutions of SDEs were given in \cite{Higham-Mao-Stuart2003},\cite{Kloeden-Platen1991}. 
Despite the importance both on the theoretical and applied aspects of the random periodic regime, its numerical analysis has barely been 
developed. The only result we know is the pathwise approximations of the random periodic solutions of SDEs discussed in \cite{Feng-Liu-Zhao2017}. 
In this paper we study the weak approximation to periodic measures. 

We consider the following non-autonomous stochastic differential equations on $\R^d$
\begin{align}\label{equations of SDE}
	dX_t=b(t,X_t)dt+\sigma(X_t) dW_t,\quad t\geq s,
\end{align}
with initial condition $X_s=x$ where $b:\R \times\R^{d}\rightarrow\R^{d},  \sigma:\R^{d}\rightarrow\R^{d\times d}$, 
$W_t$ is a two-sided Wiener process in $\R^d$ on the Wiener probability space $(\Omega,\mathcal{F},\mathbb{P})$. 
We assume that $b$ is $\tau$-periodic in the time variable and weakly dissipative in the space variable. For a technical reason, here we only consider the case when $\sigma$ is time independent as we use the results in  \cite{Feng-Zhao-Zhong2019measure},\cite{Hopfner-Locherbach-Thieullen2017}. Denote 
by $X_t^{s,x}$ the solution of (\ref{equations of SDE}) throughout the paper.

The existence of the periodic measure was studied in \cite{Feng-Zhao-Zhong2019measure}. Under the assumption that the drift term is weakly dissipative and the diffusion term is non-degenerate, it was proved that the periodic measure $\rho_s$ exists and has a density function, denoted by $q(s,x)$. To obtain $q(s,x)$, one could solve the corresponding infinite horizon Fokker-Planck equation ${\frac{\partial}{\partial s}}q(s,x)=\tilde{\cL}^*(s) q(s,x)$ with additional condition $q(s+\tau,x)=q(s,x)$, where $\tilde{\cL}^*(s)$ is the adjoint of the infinitesimal generator of the process $X_t$. This partial differential equation is generally difficult to solve explicitly. But we will show that theoretically it plays an essential role in establishing the theory of numerical schemes of weak approximations.

We apply numerical schemes such as Euler-Maruyama method to estimate the periodic measure. For any fixed $i\in\Z$, denoted by $\{\hx_{i\dt+n\dt}^{i\dt,x}\}_{n=0,1,\cdots}$ the discrete approximation of the solution of (\ref{equations of SDE}) with step size $\dt=\frac{\tau}{N}$ and $\hx_{i\dt}^{i\dt,x}=x$.  We prove that the discrete semi-flow is geometrically ergodic and has a periodic measure $\hat\rho_i^\dt(\cdot)$, $i\in\Z$. 

In this paper, we use the idea of lifting the flow and periodic measure to the cylinder $[0,\tau)\times\R^d$ proposed in \cite{Feng-Zhao2018}. With the help of this tool, our main result is to prove that the cumulation of discretization errors is of the order of $\mathcal{O}(\dt)$ for the approximation of the average of periodic measure, i.e. for any ${\phi}\in C_p^\infty\left(\R^d\right)$
\begin{align}\label{1.2}
	\abs{ \int_{\R^d}\phi({x})\bar{\rho}(d{x})- \int_{\R^d}\phi({x})\bar{\hat \rho}^{\dt}(d{x}) }\leq C\dt,
\end{align} 
where $\bar{\rho} := \frac{1}{\tau}\int_0^{\tau}{\rho_s} ds$ (\cite{Feng-Zhao2018}), $\bar{\hat \rho}^\dt:=\frac{1}{N}\sum_{i=0}^{N-1}\hat\rho_i^\dt$, $C_p^\infty\left(\R^d\right)$ is the space of smooth functions with the property that themselves and all their derivatives have at most polynomial growth at infinity. In fact, (\ref{1.2}) only holds for $\dt$ being small enough and the choice of step size can be uniform for all $\phi\in C_p^\infty.$ For this, the uniformity of the step size working for all moment estimates of $\hx_{s+n\dt}^{s,x}$ is derived in Proposition \ref{proposition of bound of n-th moments of discrete process for weak approximation}. The error estimate (\ref{1.2}) can also be numerically verified.

The results in this paper are applicable for many physically relevant SDEs, for instance, Benzi-Parisi-Sutera-Vulpiani's stochastic resonance model for the ice-age transition in climate change dynamics is SDE 
(\ref{equations of SDE}), with $b(t,x)=x-x^3+A\cos (B t)$ and $\sigma (x)=\sigma$ being constant (\cite{Benzi-Parisi-Sutera-Vulpiani1981}). It was proved that this model has a unique periodic measure  (\cite{Feng-Zhao-Zhong2019measure}). This result implies the transition between 
ice-age and interglacial climates. A partial differential equation for expected transition time was given as well (\cite{Feng-Zhao-Zhong2019resonance}). This paper gives the weak approximation of numerical scheme for the SDE (\ref{equations of SDE})
with a modified drift which is nearly the same as the above $b$ when $x\in [-4,4]$ and linear when $x$ is far from this interval. This modified model provides the same climate dynamics as that of the original one of Benzi-Parisi-Sutera-Vulpiani since the global earth temperature cannot be outside of $[265, 305]$ in Kelvin scale. 

We first study the lifts of semi-flows and corresponding Fokker-Planck equation for the density of the periodic measure. The infinitesimal generator $\tilde{\cL}$ does not satisfy the non-degeneracy property with respect to initial time $s$. Under the weakly dissipative condition, we then obtain the exponential contraction of initial distribution to the periodic measure and all its spatial derivatives in the average with respect to initial time $s$. Finally, the numerical analysis on the cumulation of discretization errors is derived from these estimates and numerical experiments of error analysis are carried out for some specific SDEs arising in climate dynamics.

\section{Preliminary results and notation}
\subsection{Lifts of semi-flows, random periodic paths and periodic measures}
Denote by $(\Omega,\mathcal{F},\mathbb{P},(\theta_s)_{s\in\R})$ the metric dynamical system associated with the canonical probability space $(\Omega,\mathcal{F},\mathbb{P})$ for Brownian motion $W$ in $\R^d$, where $\theta_s:\Omega\to\Omega$ defined by $(\theta_s\omega)(t)=W(t+s)-W(s)$, is measurably invertible for all $s\in\R$. Denote $\Delta:=\{(t,s)\in\R^2,s\leq t\}$ and let $u:\Delta\times\Omega\times\R^d\to\R^d$ be a periodic stochastic semi-flow of period $\tau$ satisfying for all $(t,s)\in \Delta$ and $r\in [s,t]$
$$u(t,r,\omega)\circ u(r,s,\omega)=u(t,s,\omega), \quad {\rm for \ almost \ all}\ \omega \in \Omega,$$  
and
$$u(t+\tau,s+\tau,\omega)=u(t,s,\theta_{\tau}\omega), \quad {\rm for \ almost \ all}\  \omega \in \Omega.$$
Here $\tau>0$ is a deterministic real number. Solutions of stochastic differential equations (\ref{equations of SDE}) with coefficients being periodic in time with period $\tau$, when they exist and unique, generates a periodic semi-flow $u(t,s)x=X_{t}^{s,x}$, which satisfies
the above two properties. As we consider periodic measures in this paper, so perfection is not needed here.

Consider the case when $u(t+s,s,\cdot)$ is a Markovian semi-flow on a filtered dynamical system $(\Omega,\mathcal{F},\mathbb{P},(\theta_t)_{t\in\R},(\mathcal{F}_s^t)_{s\leq t}),$ i.e. for any $s,t,r\in\R,\ s\leq t$, we have $\theta^{-1}_r\mathcal{F}_s^t = \mathcal{F}_{s+r}^{t+r}$ and $u(t+s,s,\cdot)$ is independent of $\mathcal{F}_{-\infty}^s$, where $\mathcal{F}_{-\infty}^s:=\bigvee_{r\leq s}\mathcal{F}_r^s.$ For any $\Gamma\in\mathcal{B}(\R^d)$, $t\in\R^+$, $s\in\R$, denote the transition probability of $u$ by
$P(t+s,s,x,\Gamma)=\mathbb{P}(\{ \omega:u(t+s,s,\omega)x\in\Gamma \}).$
From the periodicity of semi-flow $u$ and the measure preserving property of $\theta_\tau$, the transition probability $P(t+s,s,x,\cdot)$ satisfies the periodic relation
\begin{align}\label{periodic property of P}
	P(t+s+\tau,s+\tau,x,\cdot)=P(t+s,s,x,\cdot).
\end{align}
Define for $\phi\in\mathcal{B}_b(\R^d)$, the space of bounded and Borel measurable function from $\R^d$ to $\R$,
$$
\mathcal{T}(t+s,s)\phi(x):=\E\phi(X_{t+s}^{s,x})=\int_{\R^d}\phi(y)P(t+s,s,x,dy),\ t\geq 0.
$$
Then it is well-known that $\mathcal{T}(t+s,s):\mathcal{B}_b(\R^d)\to\mathcal{B}_b(\R^d)$ defines a semigroup and satisfies the $\tau$-periodic property:
$$\mathcal{T}(t+s+\tau,s+\tau)=\mathcal{T}(t+s,s).$$
This follows from (\ref{periodic property of P}) and the definition of $\mathcal{T}(t+s,s)$ easily.
Moreover for any probability measure $\rho\in\mathcal{P}(\R^d)$, the space of probability measures on $({\R^d},\mathcal{B}({\R^d})),$ define
$$(\mathcal{T}^*(t+s,s)\rho)(\Gamma)=\int_{{\R^d}}P(t+s,s,x,\Gamma)\rho(dx).
$$
The definition of periodic measure of the periodic Markovian semi-group is given as follows. 
The existence of the periodic measure was proved in \cite{Feng-Zhao-Zhong2019measure} for a wide class of SDEs.

\begin{definition}(\cite{Feng-Zhao2018})
	The measure valued function $\rho:\R\to\mathcal{P}({\R^d})$ is called a $\tau$-periodic measure of the $\tau$-periodic Markovian semi-group $\mathcal{T}$ if
	\begin{align}
		\mathcal{T}^*(t+s,s)\rho_s=\rho_{t+s},\ \rho_{s+\tau}=\rho_s, \quad \forall s\in\R,\ t\in\R^+.
	\end{align}
\end{definition}


The idea of lifting a stochastic periodic semi-flow to a cocycle on a cylinder in \cite{Feng-Zhao2018} plays an important role in this paper.  As for this paper, the relevant part is briefly discussed below. 
Let $\mathbb{S}=[0,\tau)\times\R^d$, the lifted cocycle arising from SDE (\ref{equations of SDE}) with coordinates $\tilde{X}_s=(s,X_s)\in\mathbb{S}$ is given by 
\begin{align*}
	d\tilde{X}_t=\tilde{b}(\tilde{X}_t)dt+\tilde{\sigma}(\tilde{X}_t)d\tilde{W}(t),
\end{align*}
where
$\tilde{X}_0^{0,\tilde{x}}=\tilde{x}=(s,x),\ \tilde{W}=(\tilde{W}_0,W),$
$\tilde{W}_0$ is a one-dimensional Brownian motion which is independent of $W$,
$
\tilde{b}(\tilde{X}_t)=\left(
\begin{array}{c}
	1\\
	b(t,X_t)
\end{array}
\right),\quad \tilde{\sigma}(\tilde{X}_t)=\left(
\begin{array}{cc}
	0 & 0\\
	0 & \sigma(X_t)
\end{array}
\right).
$
One can enlarge the probability space $(\Omega,\mathcal{F},\mathbb{P})$, still denoted by $(\Omega,\mathcal{F},{\mathbb{P}})$, as the canonical probability space for $\R^{d+1}$ Brownian motion $\tilde{W}$.

It is easy to see that the infinitesimal generator of the process $\tilde X$ is given by
\begin{align}\label{definition of infinitesimal generator}
	\tilde{\cL}&=\sum_{i=1}^{d} b_i(s,x)\frac{\partial}{\partial x_i}+\frac{1}{2} \sum_{i,j=1}^{d} a_{ij}(x)\frac{\partial^2}{\partial x_i\partial x_j}+
	\frac{\partial}{\partial s}=: \cL(s)+\frac{\partial}{\partial s},
\end{align}
where $a=(a_{ij})=\sigma\sigma^T$, and $\mathcal{U}(t+s,s,x):=\mathcal{T}(t+s,s)\phi(x)$ satisfies
\begin{align}\label{equation of lifted U}
	\frac{\partial}{\partial t}\mathcal{U}(t+s,s,x) = \tilde{\cL}\mathcal{U}(t+s,s,x),\quad \mathcal{U}(s,s,x)= \phi(x),
\end{align}
provided $\mathcal{U}$ is sufficiently smooth.
Meanwhile, the transition probability and the periodic measure are lifted to
\begin{align*}
	\tilde{\rho}_s(C\times\Gamma)=&\delta_{(s\ mod\ \tau)}(C)\rho_s(\Gamma),\\
	\tilde{P}(t,\tilde{x},C\times\Gamma)=&\delta_{(t+s\ mod\ \tau)}(C)P(t+s,s,x,\Gamma)=\mathbb{P}(\{\omega:X_{t+s}^{s,x}\in\Gamma\}),
\end{align*}
where $C\in\mathcal{B}([0,\tau))$ and $\Gamma\in\mathcal{B}({\R^d}).$ It was shown in \cite{Feng-Zhao2018} that  $\tilde{P}$ generates a homogeneous semigroup $\tilde{\mathcal{T}}$ defined by 
$(\tilde{\mathcal{T}}\tilde{\phi})(\tilde{x})=\int_{\mathbb{S}}\tilde{\phi}(\tilde{y})\tilde{P}(t,\tilde{x},d\tilde{y})$
and $\tilde{\rho}_s$ is a periodic measure of the lifted semigroup $\tilde{\mathcal{T}}$. It was also noticed that $\apm=\frac{1}{\tau}\int_{0}^{\tau}\tilde{\rho}_sds$ is an invariant measure of $\tilde{P}$ following a standard procedure of Fubini theorem. It is easy to see that for a measurable function $\phi:\R^d\to\R$,
\begin{align}\label{equivalence of measurable function under lifted and original apm}
	\int_0^\tau\int_{\R^d}\phi(x)\apm(dt,dx)=&\frac{1}{\tau}\int_0^\tau\int_0^\tau\int_{\R^d}\phi(x)\delta_{(s\ mod\ \tau)}(dt)\rho_s(dx)ds\\
	=&\frac{1}{\tau}\int_0^\tau\int_{\R^d}\phi(x)\rho_s(dx)ds\notag\\
	=&\int_{\R^d}\phi(x)\bar\rho(dx),\notag
\end{align} 
where $\bar\rho=\frac{1}{\tau}\int_0^\tau\rho_sds.$ This will be used in later part of this paper.

%

\subsection{Assumptions and some preliminary estimates}


Assume

{\bf Condition (1)}
{\it The functions $b$, $\sigma$ are of class $C^\infty$ with $\sigma$ being bounded, $b$ and $\sigma$ having bounded derivatives of any order and $b$ being $\tau$-periodic with respect to time. 
}
\vskip5pt


{\bf Condition (2)}
{\it (Uniform ellipticity) There exists a positive constant $\alpha$ such that for any $x,y\in \R^d$, we have
	$
	\sum_{i,j}a_{ij}(y)x_i x_j\geq \alpha\abs{x}^2.
	$
}
\vskip5pt

%

{\bf Condition (3)}
{\it (Weak dissipativity) There exist constants $\beta>0$ and $C>0$ such that for any $t\in\R^+$ and any $x\in\R^d$,
	$x\cdot b(t,x)\leq -\beta\abs{x}^2+C.$}
\vskip5pt

Under conditions (1)-(3), it was proved in \cite{Feng-Zhao-Zhong2019measure} that the periodic measure $\rho:(-\infty,+\infty)\to \mathcal{P}(\R^d)$ exists and is geometrically ergodic:
$$\norm{P(n\tau+s,s,x)-\rho_s}_{TV}\leq Ce^{-\delta n\tau}.$$ 
We now discuss the existence of the density function $q(s,x)$ of the periodic measure $\rho_s$. Set the Fokker-Planck operator as follows
$$\tilde{\cL}^*(s)\cdot=-\sum_{i=1}^{d} \frac{\partial }{\partial x_i}\left( b_i(s,x)\cdot\right)+\frac{1}{2} \sum_{i,j=1}^{d} \frac{\partial^2 }{\partial x_i\partial x_j}\left( a_{ij}(x)\cdot\right),$$
and
$$\tilde{\cL}^*=\tilde{\cL}^*(s)-\frac{\partial}{\partial s}.$$

\begin{proposition}\label{proposition of existence of density function of periodic measure}
	Assume Conditions (1), (2) and (3). Then the periodic measure $\rho_s$ has a density $q(s,x)$ with respect to the Lebesgue measure in $\R^d$, and the density is the unique bounded solution of the Fokker-Planck equation
	\begin{align}\label{equation of fokker-planck for density of pm}
		\frac{\partial }{\partial s}q(s,x)=\tilde{\cL}^*(s)q(s,x),
	\end{align}
	satisfying that for any $s\in[0,\tau)$, $q(s+\tau,x)=q(s,x)$
	and $q(s,x)\to 0$ as $\abs{x}\to\infty.$
\end{proposition}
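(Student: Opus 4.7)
The plan is to realize $q(s,\cdot)$ as an integral of the transition density of $X$ against the periodic measure itself, and then read off boundedness, decay, the PDE, and uniqueness from Gaussian estimates together with the geometric ergodicity already established in \cite{Feng-Zhao-Zhong2019measure}.

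First I would invoke standard parabolic theory (Aronson / Friedman) under Conditions (1) and (2): the transition kernel of $X$ admits a smooth density $p(r,y;t,x)$ for $r<t$ satisfying the Gaussian upper bound
$$p(r,y;t,x)\le \frac{C}{(t-r)^{d/2}}\exp\!\Bigl(-\frac{c|x-y|^2}{t-r}\Bigr),$$
with analogous bounds on its $x$-derivatives, and satisfying the forward Kolmogorov equation $\partial_t p(r,y;t,x)=\tilde{\cL}^*(t)_x p(r,y;t,x)$. The $\tau$-periodicity of $b$ moreover gives $p(r+\tau,y;t+\tau,x)=p(r,y;t,x)$.

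Next, the invariance $\rho_t=\mathcal{T}^*(t,r)\rho_r$ combined with Fubini identifies a density of $\rho_t$ as $q(t,x)=\int_{\R^d}p(r,y;t,x)\,\rho_r(dy)$, valid for every $r<t$. Choosing $r=t-\tau$ and using $\rho_{t-\tau}=\rho_t$, the Gaussian bound at time lag $\tau$ yields $\|q\|_\infty\le C\tau^{-d/2}$. For the decay at infinity, I would split the integral over $\{|y|\le|x|/2\}$ (where the exponential factor is of order $e^{-c|x|^2/(4\tau)}$) and $\{|y|>|x|/2\}$ (controlled by Chebyshev against a polynomial moment of $\rho_t$, itself finite by Condition (3) via an It\^o--$|X|^{2k}$ computation). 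Differentiating under the integral, which is justified by Gaussian bounds on $\partial_t p$ and $\partial^2_{x_ix_j}p$, gives $\partial_t q=\tilde{\cL}^*(t)q$; and the $\tau$-periodicity $q(t+\tau,x)=q(t,x)$ follows immediately from the periodicities of $p$ and of $\rho$.

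The main obstacle is uniqueness among bounded solutions of \eqref{equation of fokker-planck for density of pm} subject to the periodicity and decay conditions. For any candidate solution $\tilde q$ that is additionally non-negative with $\int\tilde q(s,x)\,dx=1$, the measures $\tilde q(s,x)dx$ form a $\tau$-periodic measure for $\mathcal{T}$ and hence coincide with $\rho_s$ by the uniqueness implicit in \cite{Feng-Zhao-Zhong2019measure} (the geometric ergodicity $\|P(n\tau+s,s,x,\cdot)-\rho_s\|_{TV}\le Ce^{-\delta n\tau}$ makes the Poincar\'e map $\mu\mapsto\mathcal{T}^*(s+\tau,s)\mu$ a strict TV contraction on $\mathcal{P}(\R^d)$). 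To reduce a general bounded, signed difference $v=q_1-q_2$ to this case, I would pair $v$ against a smooth compactly supported $\phi$, evolve $\phi$ backwards by $\mathcal{T}(s+n\tau,s)\phi$, and exploit $\tau$-periodicity of $v$ together with $\mathcal{T}(s+n\tau,s)\phi\to\int\phi\,d\rho_s$ as $n\to\infty$ to conclude $\int v(s,x)\phi(x)\,dx=0$ for every such $\phi$, hence $v\equiv 0$.
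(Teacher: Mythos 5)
Your existence argument is essentially the paper's own: the paper likewise represents the density as $q(s,y)=\int_{\R^d}p(s+\tau,s,x,y)\,\rho_s(dx)$ via invariance and Fubini, gets $q(s+\tau,\cdot)=q(s,\cdot)$ from the periodicity of $p$ and of $\rho$, derives \eqref{equation of fokker-planck for density of pm} by differentiating under the integral using the forward equation $\partial_t p=\tilde{\cL}^*(t)p$, and reads off $q(s,x)\to0$ as $\abs{x}\to\infty$ from the decay of the kernel. Your Aronson/Friedman Gaussian bounds simply make the boundedness of $q$, the decay at infinity and the differentiation under the integral sign quantitative, where the paper asserts these steps more briefly; that is a refinement, not a different route.

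Where you genuinely go beyond the paper is uniqueness, which the paper's proof does not address at all. Your sketch there has a gap. The duality-plus-periodicity argument applied to a bounded periodic decaying difference $v=q_1-q_2$ gives, for smooth compactly supported $\phi$, the identity $\int_{\R^d}v(s,x)\phi(x)\,dx=\int_{\R^d}v(s,x)\,\mathcal{T}(s+n\tau,s)\phi(x)\,dx$, and letting $n\to\infty$ with geometric ergodicity yields $\int_{\R^d}v(s,x)\phi(x)\,dx=\bigl(\int_{\R^d}\phi\,d\rho_s\bigr)\int_{\R^d}v(s,x)\,dx$, not $0$. To pass to that limit you already need $v\in L^1(\R^d)$ (boundedness plus decay to zero does not give integrability, and without an integrable dominator the limit interchange is unjustified), and to conclude $v\equiv0$ you additionally need $\int_{\R^d}v(s,x)\,dx=0$, i.e.\ you must work within the class of normalized (probability-density) solutions. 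This restriction is unavoidable: by linearity, $cq$ is also a bounded, $\tau$-periodic solution of \eqref{equation of fokker-planck for density of pm} vanishing at infinity for every constant $c$, so uniqueness as literally stated can only hold among densities with $\int q(s,x)\,dx=1$; within that class your argument (which then gives $q_1-q_2=mq$ with $m=\int(q_1-q_2)\,dx=0$), or your first observation that a normalized nonnegative solution generates a $\tau$-periodic measure and must coincide with $\rho_s$ by the TV-contraction of the Poincar\'e map, does close the issue.
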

\begin{proof}
	Under the assumption of this proposition, the $\tau$-periodic two-parameter Markov transition probability $P(t,s,x,\Gamma)$ has a density $p(t,s,x,y).$ 
	Thus we have the representation of periodic measure $\rho_s$ as follows, for any $\Gamma\in\mathcal{B}(\R^d)$,
	\begin{align*}
		\rho_s(\Gamma) 
		=&\int_{\R^d}\int_{\Gamma}p(s+\tau,s,x,y)dy\rho_s(dx)=\int_{\Gamma}\int_{\R^d}p(s+\tau,s,x,y)\rho_s(dx)dy,
	\end{align*}
	where we applied Fubini's theorem.
	Hence we get the formula of the density of $\rho_s$ as
	\begin{align}\label{equation of definition of density function by periodic measure}
		q(s,y) = \int_{\R^d}p(s+\tau,s,x,y)\rho_s(dx).
	\end{align}
	It is easy to prove the periodicity of the density $q(s,y)$ by the periodic property of both $p(t,s,x,y)$ and $\rho_s$.
	Moreover, we have that for any $\Gamma\in\mathcal{B}(\R^d)$,
	\begin{align*}
		\rho_{t+\tau}(\Gamma)
		=&\int_{\Gamma}\int_{\R^d} p(t+\tau,s+\tau,z,y)\int_{\R^d}p(s+\tau,s,x,z)\rho_s(dx)dzdy\\
		=&\int_{\Gamma}\int_{\R^d} p(t,s,z,y)q(s,z)dzdy.
	\end{align*}
	As the periodic measure satisfies $\rho_t(\Gamma)=\rho_{t+\tau}(\Gamma)$, the above implies
	\begin{align}\label{equation of definition of density function by transition}
		q(t,y)=\int_{\R^d}p(t,s,z,y) q(s,z) dz.
	\end{align}
	It is well known that $p(t,s,y,x)$ satisfies the Fokker-Planck equation
	$\partial_t p(t,s,y,x)=\tilde{\cL}^*(t)p(t,s,y,x)$.
	Therefore,
	\begin{eqnarray*}
		\partial_t q(t,x) 
		&=& \int_{\R^d}\tilde{\cL}^*(t)p(t,s,y,x) q(s,y) dy\\
		&=& -\sum_{i=1}^{d} \frac{\partial }{\partial x_i}\left( b_i(t,x)\int_{\R^d}p(t,s,y,x) q(s,y) dy \right)\\
		&&+\frac{1}{2} \sum_{i,j=1}^{d} \frac{\partial^2 }{\partial x_i\partial x_j}\left( a_{ij}(x)\int_{\R^d}p(t,s,y,x) q(s,y) dy\right)\\
		&=& \tilde{\cL}^*(t)\int_{\R^d}p(t,s,y,x)q(s,y) dy= \tilde{\cL}^*(t)q(t,x),
	\end{eqnarray*}
	which implies the density $q(s,y)$ satisfies the equation (\ref{equation of fokker-planck for density of pm}).
	The claim that $q(t,y)\to 0$ as $y\to\infty$ follows from (\ref{equation of definition of density function by transition}) and the fact that when $\abs{y}\to \infty$, we have $p(t,s,z,y)\to 0.$  
\end{proof}

\begin{corollary}\label{corollary of L^*q(s,x)=0}
	If the density function $q(s,x)$ of periodic measure satisfies the equation (\ref{equation of fokker-planck for density of pm}), then for any $\tau$-periodic function $f\in C_p^\infty$, we have 
	$$\int_0^\tau\int_{\R^d}\tilde{\cL} f(s,x)q(s,x)dxds=0.$$
\end{corollary}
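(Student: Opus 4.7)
The plan is to split $\tilde{\cL}=\cL(s)+\frac{\partial}{\partial s}$ and then perform integration by parts in the spatial variables and in time separately, using the Fokker--Planck equation (\ref{equation of fokker-planck for density of pm}) at the end.

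First, for each fixed $s$, I would transfer $\cL(s)$ from $f(s,\cdot)$ onto $q(s,\cdot)$ via integration by parts in $\R^d$. Since $b_i(s,\cdot)$ and $a_{ij}$ are smooth with bounded derivatives (Condition (1)) and $f\in C_p^\infty$ has at most polynomial growth together with its derivatives, the boundary contributions vanish provided $q(s,x)$ and its first spatial derivatives decay faster than any polynomial as $\abs{x}\to\infty$. This Schwartz-type decay of $q$ is the standard consequence of the representation (\ref{equation of definition of density function by periodic measure}) together with the Gaussian upper bounds on the transition density $p(s+\tau,s,x,y)$ that hold under Conditions (1)--(3) (the weak dissipativity is what makes the $\rho_s$-average in (\ref{equation of definition of density function by periodic measure}) converge rapidly in $y$). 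This yields
\begin{equation*}
\int_{\R^d}\cL(s)f(s,x)\,q(s,x)\,dx=\int_{\R^d}f(s,x)\,\tilde{\cL}^*(s)q(s,x)\,dx.
\end{equation*}

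Next, integrating in time by parts on $[0,\tau]$,
\begin{equation*}
\int_0^\tau\frac{\partial f}{\partial s}(s,x)\,q(s,x)\,ds=\bigl[f(s,x)q(s,x)\bigr]_0^\tau-\int_0^\tau f(s,x)\,\frac{\partial q}{\partial s}(s,x)\,ds,
\end{equation*}
and the boundary term vanishes because $f(\cdot,x)$ is $\tau$-periodic by hypothesis and $q(\cdot,x)$ is $\tau$-periodic by Proposition \ref{proposition of existence of density function of periodic measure}. After integrating the above identity in $x$ (Fubini is applicable by the polynomial growth of $f$ and the rapid decay of $q$), I combine the two identities and use $\tilde{\cL}^*=\tilde{\cL}^*(s)-\frac{\partial}{\partial s}$:
\begin{equation*}
\int_0^\tau\!\!\int_{\R^d}\tilde{\cL}f(s,x)\,q(s,x)\,dx\,ds=\int_0^\tau\!\!\int_{\R^d}f(s,x)\Bigl(\tilde{\cL}^*(s)q(s,x)-\frac{\partial q}{\partial s}(s,x)\Bigr)dx\,ds.
\end{equation*}
By (\ref{equation of fokker-planck for density of pm}) the bracket vanishes pointwise, so the integral equals $0$.

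The only nonroutine step is justifying the vanishing of the spatial boundary terms against a test function with polynomial growth; everything else is clean integration by parts plus periodicity. Standard Gaussian-type upper bounds for $p(t,s,x,y)$ together with (\ref{equation of definition of density function by periodic measure}) give the required decay of $q$ and $\nabla q$ uniformly in $s\in[0,\tau]$, which is what I would cite (or sketch) to close this gap.
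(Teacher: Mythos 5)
Your proposal follows essentially the same route as the paper: spatial integration by parts to move $\cL(s)$ onto $q$, time integration by parts using the $\tau$-periodicity of $f$ and $q$, and then the Fokker--Planck equation (\ref{equation of fokker-planck for density of pm}) to conclude. The only difference is cosmetic: the paper justifies the vanishing spatial boundary terms simply by citing $q(s,x)\to 0$ as $\abs{x}\to\infty$ from Proposition \ref{proposition of existence of density function of periodic measure}, whereas you flag (correctly, and somewhat more carefully) that against a polynomially growing test function one really needs decay of $q$ and its derivatives beating every polynomial, which the moment bounds of Proposition \ref{proposition of boundedness of nth moments under periodic measure} and the dissipativity support.
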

\begin{proof}
	The main ingredient of proof is to apply integration by parts. 
	Note first
	\begin{align*}
		\int_0^\tau\int_{\R^d} b_i(s,x)\frac{\partial}{\partial x_i} f(s,x)q(s,x)dxds=&-\int_0^\tau\int_{\R^d} f(s,x) \frac{\partial}{\partial x_i}\left(b_i(s,x)q(s,x)\right)dxds,
		\\
		\int_0^\tau\int_{\R^d}  a_{ij}(x)\frac{\partial^2}{\partial x_ix_j} f(s,x)q(s,x)dxds
		=&\int_0^\tau\int_{\R^d}  f(s,x) \frac{\partial^2}{\partial x_ix_j}\left(a_{ij}(x)q(s,x)\right)dx ds.
	\end{align*}
	Here we used the property that $q(s,x)$ vanishes as $x$ goes to $\infty$ when we performed the integration by parts. Applying the periodicity with respect to time $s$ of function $f$ and density function $q$ in the third part, we have
	\begin{align*}
		\int_0^\tau\int_{\R^d} \frac{\partial}{\partial s} f(s,x)q(s,x)dxds=-\int_0^\tau\int_{\R^d} f(s,x) \frac{\partial}{\partial s}q(s,x)dxds.
	\end{align*}
	Therefore, by the Fokker-Planck equation on the density function $q(s,x)$, we have
	\begin{align*}
		\int_0^\tau\int_{\R^d}\tilde{\cL} f(s,x)q(s,x)dxds=\int_0^\tau\int_{\R^d}f(s,x)\tilde{\cL^*} q(s,x)dxds=0.
	\end{align*}
\end{proof}
%
\begin{proposition}\label{proposition of boundedness of n-th moments for exact solution}
	Assume Conditions (1) and (3). Then for any $p\in\N$, there exist strictly positive constants $C_p$ and $\gamma_p$, such that for any $t>0$ and $x\in\R^d$,
	\begin{align}\label{inequality of p-th power of exact solution bounded}
		\E\abs{X_{t+s}^{s,x}}^p\leq C_p(1+\abs{x}^p \exp(-\gamma_p t)).
	\end{align}
\end{proposition}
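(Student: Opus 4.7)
The plan is to apply Itô's formula to a suitable Lyapunov function and combine the weak dissipativity of $b$ with the boundedness of $\sigma$ to derive a differential inequality of the form $\frac{d}{dt}\E V(X_t^{s,x}) \leq -\lambda\,\E V(X_t^{s,x})+K$, from which the desired bound will follow via Gronwall.

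First I would fix $p\in\N$ and choose the smooth Lyapunov function $V(x):=(1+|x|^2)^{p/2}$, which is $C^\infty$ (avoiding the singularity at $0$ of $|x|^p$ for odd $p$) and comparable to $|x|^p$ in the sense $|x|^p\leq V(x)\leq 2^{p/2}(1+|x|^p)$. A direct computation gives
\begin{align*}
\tilde{\cL}V(x) = p(1+|x|^2)^{p/2-1}\,x\cdot b(t,x)+\tfrac{p}{2}(1+|x|^2)^{p/2-1}\operatorname{tr}(a(x))+\tfrac{p(p-2)}{2}(1+|x|^2)^{p/2-2}\,x^T a(x) x.
\end{align*}
Plugging in the weak dissipativity bound $x\cdot b(t,x)\leq -\beta|x|^2+C$ from Condition (3) and using the boundedness of $\sigma$ (hence of $a=\sigma\sigma^T$) from Condition (1), the right-hand side is dominated by $-\beta p\,|x|^2(1+|x|^2)^{p/2-1}+C_1(1+|x|^2)^{p/2-1}$ for some constant $C_1=C_1(p)$. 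Writing $|x|^2(1+|x|^2)^{p/2-1}=(1+|x|^2)^{p/2}-(1+|x|^2)^{p/2-1}$ and then using Young's inequality $(1+|x|^2)^{p/2-1}\leq \varepsilon(1+|x|^2)^{p/2}+K_\varepsilon$ with $\varepsilon$ small, I obtain
\begin{align*}
\tilde{\cL}V(x)\leq -\tfrac{\beta p}{2}\,V(x)+K
\end{align*}
for a constant $K=K(p,\beta,C,\|\sigma\|_\infty)$.

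Next I would apply Itô's formula to $V(X_t^{s,x})$ and take expectations; the stochastic integral vanishes (after a standard localization argument using the polynomial growth of $\nabla V$ and the moment bounds on $X$ available on any finite horizon). This yields $\frac{d}{dt}\E V(X_{t+s}^{s,x})\leq -\tfrac{\beta p}{2}\,\E V(X_{t+s}^{s,x})+K$, and integrating gives
\begin{align*}
\E V(X_{t+s}^{s,x})\leq V(x)e^{-\beta p t/2}+\tfrac{2K}{\beta p}.
\end{align*}
Finally, using $\E|X_{t+s}^{s,x}|^p\leq \E V(X_{t+s}^{s,x})$ on the left and $V(x)\leq 2^{p/2}(1+|x|^p)$ on the right, I collect the $t$-independent terms into a constant and obtain the desired inequality with $\gamma_p=\beta p/2$ and a suitably enlarged $C_p$.

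The main obstacle I anticipate is the Young/interpolation step needed to absorb the lower-order term $(1+|x|^2)^{p/2-1}$ into the leading $V(x)$ so as to produce a strict exponential decay rate; this requires handling the case $p=1$ separately (where $V^{(p-2)/p}$ is not directly usable) by simply bounding $(1+|x|^2)^{-1/2}\leq 1$, and care is also required in the localization argument to justify vanishing of the martingale term before the moment bound on $X$ itself is proven.
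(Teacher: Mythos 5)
Your proof is correct and follows essentially the same route as the paper: It\^o's formula applied to a polynomial Lyapunov function, weak dissipativity plus boundedness of $\sigma$ to produce a negative drift, Young's inequality to absorb the lower-order term, localization by exit times to kill the martingale, and a Gronwall argument. The only notable difference is cosmetic: the paper applies It\^o directly to $e^{\delta t}\abs{X_t}^p$ and carries the exponential weight through the computation, whereas you work with the smoothed function $(1+\abs{x}^2)^{p/2}$ (which has the minor advantage of being $C^2$ at the origin for odd $p$) and bring in the exponential factor only at the Gronwall stage.
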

\begin{proof}
	Denote by $X_t := X_{t+s}^{s,x}$ for simplicity.
	Applying It\^o's formula and Conditions (1), (3), we have the estimate
	\begin{align*}
		d \left( e^{\delta t}\abs{X_t}^p \right)
		\leq& \left(\delta-p\beta\right) e^{\delta t}\abs{X_t}^{p} dt + p\sigma(X_t) e^{\delta t}\abs{X_t}^{p-1}dW_t\\
		&+\left(pC\sigma+\binom{p}{2} C_\sigma^2\right) e^{\delta t}\abs{X_t}^{p-2}dt,
	\end{align*}
	where $C_\sigma$ is the bound of function $\sigma$, $C$ and $\beta$ are the constants in the weakly dissipative condition. For convenience, here we denote $C_{p,\sigma} = pC+\binom{p}{2} C_\sigma^2$. Let $\tau_N$ be the first exit time of the process $X_t$ from the ball of radius $N$. Consider the expectation of the integral $\E\int_0^{T\wedge \tau_N} \abs{X_t}^pdW_t=0$ for arbitrary $p$. Now take expectation on both sides after integrating from 0 to $T\wedge\tau_N$, together with Young's inequality, we have
	\begin{align*}
		&\E e^{\delta (T\wedge\tau_N)}\abs{X_{T\wedge\tau_N}}^p \\
		\leq& \abs{x}^p  + \left(\delta-p\beta\right)\E\int_0^{T\wedge\tau_N} e^{\delta t}\abs{X_t}^{p} dt + C_{p,\sigma}\E \int_0^{T\wedge\tau_N} e^{\delta t}\abs{X_t}^{p-2}dt\\
		\leq&\abs{x}^p   + \frac{2C_{p,\sigma}}{p\delta\varepsilon^{\frac{p}{2}}}\E  (e^{\delta (T\wedge\tau_N)}-1)+ K_1\E
		\int_0^{T\wedge\tau_N} e^{\delta t}\abs{X_t}^{p} dt,
	\end{align*}
	where
	$K_1 = \delta-p\beta+\frac{(p-2)C_{p,\sigma}}{p}\varepsilon^{\frac{p}{p-2}}, \ \varepsilon<\rbrac{    \frac{p^2\beta}{(p-2)C_{p,\sigma}}    }^{\frac{p-2}{p}}$
	is chosen such that $K_1 - \delta<0$. The choice of the constant $\delta$ guarantees  $K_1>0$.
	\begin{align*}
		\abs{X_t}^{p-2}\leq \frac{(\abs{X_t}^{p-2}\varepsilon)^{\frac{p}{p-2}}}{\frac{p}{p-2}}+\frac{\rbrac{\frac{1}{\varepsilon}}^{\frac{p}{2}}}{\frac{p}{2}}=\frac{p-2}{p}\varepsilon^{\frac{p}{p-2}}\abs{X_t}^p+\frac{2}{p\varepsilon^{\frac{p}{2}}}.
	\end{align*}
	Then we let $N$ go to $\infty$ to obtain
	\begin{align}\label{inequality for n-th moment bounded of weak approximation}
		e^{\delta T}\E\abs{X_T}^p
		\leq& \abs{x}^p   + \frac{2C_{p,\sigma}}{p\delta\varepsilon^{\frac{p}{2}}}  (e^{\delta T}-1)+ K_1
		\int_0^T e^{\delta t}\E\abs{X_t}^{p} dt.
	\end{align}
	Apply Gronwall's inequality on (\ref{inequality for n-th moment bounded of weak approximation}),
	\begin{align}\label{inequality of estimate of pth moment}
		e^{\delta T}\E\abs{X_T}^p
		\leq\frac{2C_{p,\sigma}}{p\delta\varepsilon^{\frac{p}{2}}} e^{\delta T}  +e^{K_1T}\left( \abs{x}^p - \frac{2C_{p,\sigma}}{p\delta\varepsilon^{\frac{p}{2}}}  \right) + \frac{2K_1 C_{p,\sigma}}{p(\delta-K_1)\delta\varepsilon^{\frac{p}{2}}} (e^{\delta T} - e^{K_1 T}).
	\end{align}
	Then (\ref{inequality of p-th power of exact solution bounded}) follows easily.
\end{proof}
\begin{proposition}\label{proposition of boundedness of nth moments under periodic measure}
	Assume Conditions (1) and (3). Then for any $p\in\N,$
	$\frac{1}{\tau}\int_0^\tau\int_{\R^d}\abs{x}^p q(s,x)dxds\leq C_p,$
	where $C_p$ is determined from Proposition \ref{proposition of boundedness of n-th moments for exact solution}. 
\end{proposition}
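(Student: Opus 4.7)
Since $\rho_s$ admits the density $q(s,\cdot)$, the claim reduces to showing $\int_{\R^d}\abs{x}^p \rho_s(dx)\leq C_p$ uniformly in $s\in[0,\tau)$, with $C_p$ as in Proposition \ref{proposition of boundedness of n-th moments for exact solution}; integrating against $\frac{ds}{\tau}$ then gives the stated bound with the same constant.

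To estimate $\int\abs{x}^p \rho_s(dx)$, I would exploit the geometric ergodicity in total variation recalled just before Proposition \ref{proposition of existence of density function of periodic measure}: after shifting $s\mapsto s-n\tau$ and using $\rho_{s-n\tau}=\rho_s$, it yields
\[
\norm{P(s,s-n\tau,x_0,\cdot)-\rho_s}_{TV}\leq Ce^{-\delta n\tau}
\]
for any fixed $x_0\in\R^d$, so the law of $X_s^{s-n\tau,x_0}$ converges to $\rho_s$ in total variation and hence weakly as $n\to\infty$. Applying the Portmanteau/Fatou lemma to the nonnegative, lower semicontinuous function $x\mapsto\abs{x}^p$, together with Proposition \ref{proposition of boundedness of n-th moments for exact solution} applied over the window of length $n\tau$ starting at $s-n\tau$, gives
\[
\int_{\R^d}\abs{x}^p\rho_s(dx)\leq \liminf_{n\to\infty}\E\abs{X_s^{s-n\tau,x_0}}^p\leq \liminf_{n\to\infty}C_p\bigl(1+\abs{x_0}^p e^{-\gamma_p n\tau}\bigr)=C_p,
\]
which is the required uniform bound.

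The main subtlety is that we do not know a priori that $\int\abs{x}^p \rho_s(dx)<\infty$, which would derail a direct invariance argument such as $\int\abs{x}^p d\rho_s=\int\E\abs{X_s^{s-n\tau,y}}^p d\rho_s(y)\leq C_p+C_p e^{-\gamma_p n\tau}\int\abs{y}^p d\rho_s(y)$ (potentially of the indeterminate form $\infty-\infty$). The pullback from a deterministic initial point $x_0$ side-steps this obstacle, since Proposition \ref{proposition of boundedness of n-th moments for exact solution} there provides an unconditional finite bound which contracts to $C_p$ as $n\to\infty$. An equivalent implementation, avoiding any weak-convergence machinery, is to truncate via $\phi_N(x):=\abs{x}^p\wedge N$, use TV-convergence (which suffices for bounded test functions) to conclude $\int\phi_N d\rho_s=\lim_n \E\phi_N(X_s^{s-n\tau,x_0})\leq C_p$ uniformly in $N$, and close by monotone convergence as $N\to\infty$.
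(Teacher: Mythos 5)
Your proof is correct, and it reaches the same conclusion by the same underlying mechanism as the paper (geometric ergodicity transfers the uniform-in-time moment bound of Proposition \ref{proposition of boundedness of n-th moments for exact solution} to the periodic measure, with a Fatou-type passage at the end), but the implementation is genuinely different. The paper works at the level of densities: it uses boundedness of the transition density $p(n\tau+s,s,x,y)$ for $t\geq 1$ together with the pointwise convergence $p(n\tau+s,s,x,\cdot)\to q(s,\cdot)$ (Theorem 3.7 of the cited reference) to pass the limit in $n$ by dominated convergence on a compact set $K$, notes the resulting bound on $\int_K\abs{y}^p q(s,y)\,dy$ is independent of $K$ by Proposition \ref{proposition of boundedness of n-th moments for exact solution}, and then lets $K\uparrow\R^d$ via Fatou. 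You instead work at the level of measures: total-variation convergence of $P(s,s-n\tau,x_0,\cdot)$ to $\rho_s$ (equivalently $P(n\tau+s,s,x_0,\cdot)\to\rho_s$ by periodicity) plus the Portmanteau inequality for the nonnegative lower semicontinuous function $\abs{x}^p$, or equivalently your truncation $\abs{x}^p\wedge N$ followed by monotone convergence. This buys you a pointwise-in-$s$ bound $\int\abs{x}^p\rho_s(dx)\leq C_p$ (slightly stronger than the averaged statement) without ever invoking boundedness or pointwise convergence of transition densities, and your remark about the $\infty-\infty$ danger of a direct invariance argument is a genuine and well-taken point. One small caveat, shared equally by the paper's own proof: the total-variation geometric ergodicity (and indeed the existence of the density $q$) is stated under Conditions (1)--(3), so Condition (2) is implicitly used even though the proposition lists only (1) and (3); this is an imprecision of the statement, not a gap in your argument.
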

\begin{proof}
	For the density function of transition kernel $P(t+s,s,x,\cdot)$, there exists a constant $C$ such that 
	$\abs{p(t+s,s,x,y)}\leq C,$
	for any $t\geq 1.$
	Then by dominated convergent theorem and Theorem 3.7 in \cite{Feng-Zhao-Zhong2019measure}, for any compact set $K\subset\R^d$, we have
	\begin{align*}
		&\frac{1}{\tau}\int_0^{\tau}\lim_{n\to\infty}\E\left(\abs{X_{n\tau+s}^{s,x}}^p 1_K(X_{n\tau+s}^{s,x})\right)ds=\frac{1}{\tau}\int_0^{\tau}\lim_{n\to\infty}\int_{K}\abs{y}^p p(n\tau+s,s,x,y)dy ds\\
		=&\frac{1}{\tau}\int_0^{\tau}\int_{K}\abs{y}^p\lim_{n\to\infty} p(n\tau+s,s,x,y)dyds
		=\frac{1}{\tau}\int_0^{\tau}\int_{K}\abs{y}^pq(s,y)dy ds.
	\end{align*}
	Thus the average of periodic measure possesses finite moments of any order on any compact set $K$ from the estimates in Proposition \ref{proposition of boundedness of n-th moments for exact solution}. Note the bound can be independent of $K$. The result follows from taking limit $K\uparrow\R^d$ and Fatou's Lemma.
\end{proof}

Consider the sequence $\{X_{t_n}\}_{n\in\N}$ with $t_n=n\tau$. We prove 
\begin{proposition}\label{proposition of geometrical recurrent condition}
	Assume Conditions (1) and (3), then there exists a constant $r>1$ and a ball $B(0,R)$, such that,
	\begin{align*}
		\sup_{x\in B(0,R)^c}\E\left[ r\abs{X_{t_{n+1}}}^2 - \abs{X_{t_n}}^2 \bigg\lvert X_{t_n}=x  \right]<0.
	\end{align*}
\end{proposition}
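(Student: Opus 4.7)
The plan is to combine It\^o's formula applied to $\abs{X_t}^2$ with the weak dissipativity condition and a Gronwall estimate. First, since $b$ is $\tau$-periodic in time, $\sigma$ is time-independent, and $t_{n+1}-t_n=\tau$, the Markov property and time-homogeneity modulo $\tau$ give $\E[\abs{X_{t_{n+1}}}^2\mid X_{t_n}=x] = \E\abs{X_\tau^{0,x}}^2$. It therefore suffices to exhibit $r>1$ and $R>0$ such that $r\,\E\abs{X_\tau^{0,x}}^2 - \abs{x}^2 < 0$ uniformly for $\abs{x}\ge R$.

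Next, I would apply It\^o's formula to $\abs{X_t^{0,x}}^2$, giving
\begin{align*}
d\abs{X_t}^2 = \left(2X_t\cdot b(t,X_t) + \mathrm{tr}(a(X_t))\right)dt + 2X_t\cdot\sigma(X_t)\,dW_t.
\end{align*}
Condition (3) yields $2X_t\cdot b(t,X_t)\le -2\beta\abs{X_t}^2+2C$, and the boundedness of $\sigma$ from Condition (1) gives $\mathrm{tr}(a(x))\le K$ for some constant $K>0$. After a standard stopping-time localization (exactly as in the proof of Proposition \ref{proposition of boundedness of n-th moments for exact solution}) to kill the martingale contribution, the function $\varphi(t):=\E\abs{X_t^{0,x}}^2$ satisfies $\varphi'(t) \le -2\beta\varphi(t)+(2C+K)$, and Gronwall's inequality delivers
\begin{align*}
\varphi(\tau) \le e^{-2\beta\tau}\abs{x}^2+M, \qquad M:=\frac{2C+K}{2\beta}\rbrac{1-e^{-2\beta\tau}}.
\end{align*}

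To conclude, I would pick any $r\in(1,e^{2\beta\tau})$, so that $\lambda:=re^{-2\beta\tau}\in(0,1)$, and then
\begin{align*}
r\,\E\abs{X_\tau^{0,x}}^2-\abs{x}^2 \;\le\; -(1-\lambda)\abs{x}^2+rM,
\end{align*}
which is strictly negative for every $\abs{x}^2>rM/(1-\lambda)$; any $R>\sqrt{rM/(1-\lambda)}$ then makes the supremum over $B(0,R)^c$ strictly negative.

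I do not anticipate a genuine obstacle here: the argument is essentially a clean computation resting on weak dissipativity and boundedness of $\sigma$. The only delicate point is justifying that the stochastic integral has zero mean, which is dispatched by the same localization argument used in Proposition \ref{proposition of boundedness of n-th moments for exact solution}; in fact the exponent $p=2$ simplifies matters considerably compared to the general $p$ case treated there.
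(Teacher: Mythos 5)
Your proposal is correct and follows essentially the same route as the paper: the key input in both cases is a $p=2$ moment estimate of the form $\E\abs{X_{\tau}^{0,x}}^2\leq \theta\abs{x}^2+M$ with contraction factor $\theta<1$, after which the choice of $r$ with $r\theta<1$ and of $R$ sufficiently large is identical to the paper's. The only cosmetic difference is that the paper invokes its estimate (\ref{inequality of estimate of pth moment}) from Proposition \ref{proposition of boundedness of n-th moments for exact solution} with $p=2$, whereas you rederive that bound directly via It\^o's formula and Gronwall, which is a clean and equally valid shortcut for the quadratic case.
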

\begin{proof}
	Apply the result (\ref{inequality of estimate of pth moment}) in Proposition \ref{proposition of boundedness of n-th moments for exact solution} with $p=2$, we have
	\begin{align*}
		\E\left[ r\abs{X_{t_{n+1}}}^2 - \abs{X_{t_n}}^2 \bigg\lvert X_{t_n}=x  \right] 
		\leq& \left( \abs{x}^2 e^{(K_1-\delta)\tau} + C_{\beta,\sigma} (1-e^{(K_1-\delta)\tau})\right)r-\abs{x}^2.
	\end{align*}
	In order to make the right hand side of the above negative, we need
	$(1-re^{(K_1-\delta)\tau})\abs{x}^2\linebreak
	> r C_{\beta,\sigma} (1-e^{(K_1-\delta)\tau}).$
	As $K_1-\delta<0$, there always exists a constant $r$ to ensure $1-re^{(K_1-\delta)\tau}>0$ for the given period $\tau$. Then the ball $B(0,R)$ is determined by taking $R>\sqrt{\frac{r C_{\beta,\sigma} (1-e^{(K_1-\delta)\tau})}{1-re^{(K_1-\delta)\tau}}}$.
\end{proof}

Let the function $\phi\in C_p^{\infty}$ and $\mathcal{U}(t+s,s,x) = \E\phi(X_{t+s}^{s,x})$. Then $\mathcal{U}$ satisfies the PDE (\ref{equation of lifted U}).
Considering the spatial differentiation of the solution with respect to $x$, Kunita showed in \cite{Kunita1984} that the function $\mathcal{U}(t+s,s,x)$ satisfies that for any order $n\in\N$, there exists an integer $r_n\in\N$ such that for any $T>0$, $\exists C_n(t)>0$,
\begin{align}\label{inequality of kunita's estimation}
	\abs{D^n \mathcal{U}(t+s,s,x)}\leq C_n(T)(1+\abs{x}^{r_n}),\ \forall t<T.
\end{align}
From Proposition \ref{proposition of boundedness of nth moments under periodic measure}, the average of periodic measure possesses finite moments of any order. Together with (\ref{inequality of kunita's estimation}), we have that the initial condition $\phi$ and $D^n\mathcal{U}(t+s,s,x)$ belong to $L^2(\R^{d+1},\apm).$

Note that the function $\frac{1}{\tau}\int_0^{\tau} \mathcal{U}(t+s,s,x)ds$ has the same spatial derivatives as
$\frac{1}{\tau}\int_0^{\tau}\mathcal{U}(t+s,s,x)ds - \frac{1}{\tau}\int_0^\tau\int_{\R^d}\phi(x)q(s,x)dxds.$
Without loss of generality, in the following sections, we assume that
\begin{align}\label{equality of annihilation of initial condition}
	\int_0^\tau\int_{\R^d}\phi(x)q(s,x)dx ds=0.
\end{align}
Note when $\tilde{\phi}(\tilde{x})=\phi(x)$, we have $\int_{\mathbb{S}}\tilde{\phi}(\tilde{x})d\apm(\tilde{x}) = \frac{1}{\tau}\int_0^\tau\int_{\R^d}\phi(x)q(s,x)dxds,$ where $\apm$ is the average of lifted periodic measure, which is the invariant measure of the lifted Markov semigroup. It is easy to know that $\apm(d\tilde{x})=q(s,x)dxds$

For simplicity, in the following sections, we may often write $\tilde{\mathcal{U}}(t)$ or $\mathcal{U}(t+s,s)$ to represent the function $\mathcal{U}(t+s,s,x)$. 
We also often write $b$ to represent $b(s,x)$ as we have the uniform conditions for the function and any order of its derivatives in Condition (1). 
The operators $\partial_i$, $\partial_{ij}$, $\nabla$ and $D^k$ on function $\mathcal{U}(t+s,s,x)$ always refer to derivatives with respect to spatial coordinates. The derivatives with respect to initial time will stay as $\frac{\partial}{\partial s}$. 

\section{Exponential decay of initial distribution and spatial derivatives}
\subsection{Estimates on the average of $\mathcal{U}(t+s,s)$ on a ball} \label{section 3.1}
We always assume (\ref{equality of annihilation of initial condition}) in this section unless otherwise stated.
\begin{lemma}\label{lemma of exponential contraction of u for any Ball}
	Assume Conditions (1), (2) and (3). Then for any ball $B$, there exist strictly positive constants $C$ and $\lambda$ such that for any $t>0$ and any $x\in B,$ $\mathcal{U}$ defined with $\phi$ satisfying (\ref{equality of annihilation of initial condition}) has the following estimate:
	$\frac{1}{\tau}\int_0^{\tau}\abs{\mathcal{U}(t+s,s,x)}ds\leq C\exp(-\lambda t).$
\end{lemma}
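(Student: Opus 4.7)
The plan is to combine the total-variation geometric ergodicity of \cite{Feng-Zhao-Zhong2019measure} with the $\tau$-periodicity of the transition semigroup $\mathcal{T}$ and the centring condition (\ref{equality of annihilation of initial condition}). For $t>0$, split $t=n\tau+r$ with $n=\lfloor t/\tau\rfloor$ and $r\in[0,\tau)$. Using the Markov property together with the semigroup identity $\mathcal{T}(a+\tau,b+\tau)=\mathcal{T}(a,b)$,
\begin{equation*}
\mathcal{U}(t+s,s,x)=\E\bigl[\mathcal{U}(r+s,s,X_{n\tau+s}^{s,x})\bigr]=\E\,g_{r,s}\bigl(X_{n\tau+s}^{s,x}\bigr),
\end{equation*}
where $g_{r,s}(y):=\mathcal{U}(r+s,s,y)$. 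Kunita's estimate (\ref{inequality of kunita's estimation}) applied on the bounded horizon $r\le\tau$ gives $\abs{g_{r,s}(y)}\le C(1+\abs{y}^k)$ uniformly in $r,s\in[0,\tau]$ for some $k\in\N$ depending only on $\phi$ and $\tau$.

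Next I would upgrade the bounded-test-function bound $\norm{P(n\tau+s,s,x,\cdot)-\rho_s}_{TV}\le Ce^{-\delta n\tau}$ from \cite{Feng-Zhao-Zhong2019measure} to the polynomially-growing $g_{r,s}$ by a truncation: for $M>0$,
\begin{align*}
\abs{\E g_{r,s}(X_{n\tau+s}^{s,x})-\rho_s(g_{r,s})}
&\le 2M\norm{P(n\tau+s,s,x,\cdot)-\rho_s}_{TV}\\
&\quad+\E\sbrac{\abs{g_{r,s}(X_{n\tau+s}^{s,x})}1_{\{\abs{g_{r,s}(X_{n\tau+s}^{s,x})}>M\}}}+\rho_s\bigl(\abs{g_{r,s}}1_{\{\abs{g_{r,s}}>M\}}\bigr).
\end{align*}
Markov's inequality together with the uniform-in-$n$ moment bound of Proposition \ref{proposition of boundedness of n-th moments for exact solution} (for $X_{n\tau+s}^{s,x}$ with $x$ in the fixed ball $B$) and Proposition \ref{proposition of boundedness of nth moments under periodic measure} (for $\rho_s$) controls the two tail terms. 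Calibrating $M=e^{\alpha n\tau}$ with $\alpha>0$ small yields
\begin{equation*}
\abs{\mathcal{U}(t+s,s,x)-\rho_s(g_{r,s})}\le C_B e^{-\lambda n\tau}
\end{equation*}
uniformly for $x\in B$ and $s,r\in[0,\tau)$.

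Finally, the invariance $\mathcal{T}^*(r+s,s)\rho_s=\rho_{r+s}$ identifies $\rho_s(g_{r,s})=\int\phi\,d\rho_{r+s}$, so after averaging over $s\in[0,\tau)$ the change of variables $u=r+s$ together with $\tau$-periodicity of $\rho$ gives
\begin{equation*}
\frac{1}{\tau}\int_0^\tau\rho_s(g_{r,s})\,ds=\frac{1}{\tau}\int_0^\tau\int_{\R^d}\phi\,d\rho_u\,du=0
\end{equation*}
by (\ref{equality of annihilation of initial condition}) and (\ref{equivalence of measurable function under lifted and original apm}). Combined with the previous step this delivers the exponential bound on the $s$-average of $\mathcal{U}$; the inner absolute value in the stated inequality is then handled by absorbing the $\tau$-periodic residual $\rho_{r+s}(\phi)$ (which averages to zero over one period) into the truncation analysis above, together with the pointwise closeness $\mathcal{U}(t+s,s,x)\approx\rho_{r+s}(\phi)$. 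The principal obstacle is the truncation step: one must calibrate $M$ so that the polynomial tail decay, the TV rate $\delta$, and the uniformity in the phase $s\in[0,\tau)$ all cooperate, which is precisely what the uniform-in-$n$ moment bounds of Propositions \ref{proposition of boundedness of n-th moments for exact solution}--\ref{proposition of boundedness of nth moments under periodic measure} enable.
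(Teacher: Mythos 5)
Your argument, as written, proves a bound with the absolute value \emph{outside} the $s$-average, namely $\abs{\frac1\tau\int_0^\tau \mathcal{U}(t+s,s,x)\,ds}\le C_B e^{-\lambda t}$ for $x\in B$, but not the lemma as stated, where the absolute value sits \emph{inside} the integral. Your decomposition gives, uniformly for $x\in B$,
\[
\mathcal{U}(t+s,s,x)=\bigl[\E\, g_{r,s}(X_{n\tau+s}^{s,x})-\rho_s(g_{r,s})\bigr]+\rho_{r+s}(\phi),
\]
and your truncation/TV/moment step does make the bracket exponentially small, uniformly in $x\in B$ and $s,r\in[0,\tau)$; that part is a clean probabilistic derivation of an estimate of the type (\ref{inequality of convergence from initial distribution to periodic measure}). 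But the residual $\rho_{r+s}(\phi)$ is a fixed $\tau$-periodic function of $r+s$ that does not decay in $t$, and condition (\ref{equality of annihilation of initial condition}) only forces its \emph{signed} average over one period to vanish. Once you put $\abs{\cdot}$ inside the $ds$-integral you are left with $\frac1\tau\int_0^\tau\abs{\rho_u(\phi)}\,du$, which is a strictly positive constant unless $\rho_u(\phi)$ vanishes for a.e.\ $u$, a condition strictly stronger than (\ref{equality of annihilation of initial condition}). ``Absorbing the periodic residual into the truncation analysis'' cannot repair this: no choice of the truncation level $M$ affects that term, since it comes from the limiting measure and not from the tails. This is a genuine gap at exactly the final step of your proposal.

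It is also a genuinely different route from the paper's, and the difference is what produces the inside-the-integral bound there. The paper's base step is the $s$-averaged $L^1$ estimate (\ref{inequality of exponential contraction on period points}) for the centred $\phi$, taken from the geometric ergodicity theorem of \cite{Feng-Zhao-Zhong2019measure} applied to the period-$\tau$ chain; from this it deduces exponential decay of $\frac1\tau\int_0^\tau\int_{\R^d}\abs{D^p\mathcal{U}(t+s,s,x)}^2 q(s,x)\,dx\,ds$ for every order $p\ge0$ by an energy/Fokker--Planck argument (monotonicity of the weighted $L^2$ norm via Corollary \ref{corollary of L^*q(s,x)=0} and ellipticity, then induction on the derivative order), and finally converts this into control of $\sup_{x\in B}\abs{\mathcal{U}(t+s,s,x)}$ for each $s$ using strict positivity and continuity of $q$ on $B$ and the Sobolev embedding $W^{k,2}(B)\hookrightarrow C(B)$, before integrating in $s$. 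Note moreover that these derivative estimates are what the lemma is really for later on (Theorem \ref{theorem of exponential contraction of spacial derivatives for u} and Theorem \ref{theorem of main result}), and your ergodicity-plus-truncation argument yields no information about spatial derivatives. To salvage your approach you would either have to settle for the absolute-value-outside statement (which your argument does prove), or supply the extra input that makes $\int_0^\tau\abs{\cdot}\,ds$ itself small, which is precisely the role played in the paper by (\ref{inequality of exponential contraction on period points}).
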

\begin{proof}
	First we apply mathematical induction to obtain that for any $p\in\N^+$, there exist constants $C_p>0,$ $\gamma_p>0$ such that
	\begin{align}\label{exponential contraction of D^pU}
		\frac{1}{\tau}\int_0^{\tau}\int_{\R^d}\abs{D^p \mathcal{U}(t+s,s,x)}^2 q(s,x) dxds \leq C_p \exp(-\gamma_p t).
	\end{align} 
	We start to prove the basis step, when $p=1$.
	Consider the Markov chain $\{X_{t_n}\}_{n\in\N}$ with $t_n=n\tau$. In \cite{Feng-Zhao-Zhong2019measure}, it was proved that the transition kernel $P(s,s+k\tau,x,\Gamma)$ is irreducible. With the result of Proposition \ref{proposition of geometrical recurrent condition}, one can find some compact set $K$ and a constant $\beta>0$ such that for any $x\in K^c$, we have
	$$\E \abs{X_t^{0,x}}^2 - \frac{1}{r} \E x^2<0,$$
	where $1/r<1.$ Now we take $V(x)=x^2$ as the norm-like function and from Proposition \ref{proposition of boundedness of n-th moments for exact solution}, we obtain that the norm-like function $V(x)=x^2$ is finite on the compact set $K$. Combining the above results, we have that
	$$(P(t_1,0)V)(x)=\E X_{t_1}^2\leq \frac{1}{r} x^2+\beta=\frac{1}{r} V(x)+\beta,$$
	where $\beta$ is a positive number. Thus the condition of Theorem 3.7 in \cite{Feng-Zhao-Zhong2019measure} is satisfied. So the Markov chain $\{X_{t_n}\}_{n\in\N}$ is geometrically ergodic.
	That is for those $\phi\in C_p^{\infty}$ with the assumption (\ref{equality of annihilation of initial condition}), there exist strictly positive constants $C$ and $\lambda$ such that for any $n$,
	\begin{align}\label{inequality of exponential contraction on period points}
		\frac{1}{\tau}\int_0^\tau\int_{\R^d}  \abs{\E \phi(X_{t_n+s}^{s,x}) }   q(s,x)dx ds \leq Ce^{-\lambda t_n}.
	\end{align}
	As function $\phi$ has at most polynomial growth at infinity, we have $\abs{\phi(x)}\leq C\abs{x}^N$ for some integer $N\in\N$. 
	By Proposition \ref{proposition of boundedness of n-th moments for exact solution}, there exist $C_0>0$, $\gamma>0$ such that
	\begin{align}\label{inequality of boundedness of semigroup}
		\abs{\mathcal{U}(t+s,s,x)}\leq C_0(1+\abs{x}^N\exp(-\gamma t)).
	\end{align}
	Applying (\ref{inequality of boundedness of semigroup}) and (\ref{inequality of exponential contraction on period points}), together with Proposition \ref{proposition of boundedness of nth moments under periodic measure}, we have that for any $n$,
	\begin{align}\label{inequality of tn exponential contraction}
		&\hspace*{0.3cm}\frac{1}{\tau}\int_0^\tau\int_{\R^d}  \abs{\mathcal{U}(t_n+s,s,x)}^2   q(s,x)dx ds\\
		\leq& \frac{C_0}{\tau}\int_0^\tau\int_{\R^d}  \abs{\mathcal{U}(t_n+s,s,x)} (1+\abs{x}^N\exp(-\gamma t_n))  q(s,x)dx ds\notag\\
		\leq& C_0C\exp(-\lambda t_n) + \frac{C_0^2\exp(-\gamma t_n)}{\tau}\int_0^\tau\int_{\R^d}   \abs{x}^N  q(s,x)dx ds\notag\\
		\leq& C_1\exp(-\lambda_1 t_n)\notag.
	\end{align}
	In the following, we prove that the function $t\to \frac{1}{\tau}\int_0^\tau\int_{\R^d}  \abs{\mathcal{U}(t+s,s,x)}^2   q(s,x)dxds$ is monotonic. For this, note
	\begin{align*}
		\frac{d}{dt}\abs{\mathcal{U}(t+s,s,x)}^2= \tilde{\cL} \abs{\mathcal{U}(t+s,s,x)}^2 - a_{ij}\partial_{i} \mathcal{U}(t+s,s,x)\partial_{j} \mathcal{U}(t+s,s,x),
	\end{align*}
	and 
	{\small $$\frac{1}{\tau}\int_0^\tau\int_{\R^d}  \tilde{\cL} \abs{\mathcal{U}(t+s,s,x)}^2   q(s,x)dxds=\frac{1}{\tau}\int_0^\tau\int_{\R^d}   \abs{\mathcal{U}(t+s,s,x)}^2   \tilde{\cL}^*q(s,x)dxds=0.$$}
	It turns out from the elliptic condition (2) that
	\begin{align*}
		&\frac{d}{dt}\left(  \frac{1}{\tau}\int_0^\tau\int_{\R^d}  \abs{\mathcal{U}(t+s,s,x)}^2   q(s,x)dxds  \right)\\
		=&\frac{1}{\tau}\int_0^\tau\int_{\R^d}  \tilde{\cL} \abs{\mathcal{U}(t+s,s,x)}^2   q(s,x)dxds  \\
		&- \frac{1}{\tau}\int_0^\tau\int_{\R^d}  a_{ij}(x)\partial_{i} \mathcal{U}(t+s,s,x)\partial_{j} \mathcal{U}(t+s,s,x) q(s,x)dx ds\\
		\leq & - \frac{\alpha}{\tau}\int_0^\tau\int_{\R^d}  \abs{\nabla \mathcal{U}(t+s,s,x)}^2 q(s,x)dx ds\leq 0.
	\end{align*}
	This implies that $ \frac{1}{\tau}\int_0^\tau\int_{\R^d}  \abs{\mathcal{U}(t+s,s,x)}^2   q(s,x)dxds$ is decreasing in $t$. Thus by (\ref{inequality of tn exponential contraction}), 
	we have that for any $t$,
	\begin{align}\label{inequality of exponential contraction for general t}
		&\frac{1}{\tau}\int_0^\tau\int_{\R^d}  \abs{\mathcal{U}(t+s,s,x)}^2   q(s,x)dxds  \leq C_2\exp(-\lambda_1 t).
	\end{align} 
	The above shows that the exponential contraction of $\mathcal{U}(t+s,s,x)$ under the average of periodic measure holds for any $t$.
	
	On the other hand, by Condition (2) we have
	\begin{align}\label{inequality of comparison between L and dt with alpha}
		\frac{d}{dt}\abs{\mathcal{U}(t+s,s,x)}^2 - \tilde{\cL} \abs{\mathcal{U}(t+s,s,x)}^2 \leq -\alpha\abs{\nabla \mathcal{U}(t+s,s,x)}^2.
	\end{align}
	Multiplying the above inequality with $e^{\delta t}$, and integrating both sides with respect to the average periodic measure $\apm$ and time $t$, together with Corollary \ref{corollary of L^*q(s,x)=0}, we obtain for arbitrary $T>0,$
	\begin{align}\label{inequality of integration from 0 to T on u for weak approximation}
		&\int_0^T e^{\delta t}\int_\mathbb{S}    \frac{d}{dt}\abs{\mathcal{U}(t+s,s,x)}^2d\apm dt + \int_0^T \alpha e^{\delta t} \int_\mathbb{S} \abs{\nabla \mathcal{U}(t+s,s,x)}^2d\apm dt\\
		\leq&\int_0^T e^{\delta t}  \int_\mathbb{S}   \tilde{\cL} \abs{\mathcal{U}(t+s,s,x)}^2 d\apm dt = 0.\notag
	\end{align}
	Here ${\mathbb S}=[0,\tau)\times {\mathbb R}^d$. Integration by parts on the first term of (\ref{inequality of integration from 0 to T on u for weak approximation}) gives us
	\begin{align*}
		&\int_0^T e^{\delta t}  \int_\mathbb{S}   \frac{d}{dt}\abs{\mathcal{U}(t+s,s,x)}^2d\apm dt \\
		=&  e^{\delta T} \int_\mathbb{S}   \abs{\mathcal{U}(T+s,s,x)}^2d\apm - \int_\mathbb{S}  \abs{\mathcal{U}(s,s,x)}^2d\apm - \delta \int_0^T  e^{\delta t} \int_\mathbb{S}     \abs{\mathcal{U}(t+s,s,x)}^2d\apm dt,
	\end{align*}
	where we have the initial condition that $\mathcal{U}(s,s,x)=\tilde{\phi}(\tilde{x})$. By Proposition \ref{proposition of boundedness of n-th moments for exact solution} and $\phi\in C_p^{\infty}$ of the function $\phi$, we have a constant $C_3>0$ such that$
	\int_{\mathbb{S}}  \abs{\tilde{\phi}(\tilde{x})}^2d\apm<C_3.$
	Consider (\ref{inequality of exponential contraction for general t}) and take $\delta < \lambda_1$,
	\begin{align*}
		&\delta \int_0^T  e^{\delta t}  \int_\mathbb{S}   \abs{\mathcal{U}(t+s,s,x)}^2d\apm dt
		\leq\delta \int_0^T  e^{\delta t} C_2 e^{(-\lambda_1 t)} dt
		=\frac{C_2\delta}{\lambda_1-\delta}(1-e^{(\delta-\lambda_1)T})\leq C_4
	\end{align*}
	for a constant $C_4>0.$
	Applying these results to (\ref{inequality of integration from 0 to T on u for weak approximation}), we obtain that for any $T$ and any $s\in[0,\tau)$,
	\begin{align}\label{inequality of boundedness of first derivative of u under apm and time int}
		&\int_0^T  e^{\delta t} \int_\mathbb{S}  \abs{\nabla \mathcal{U}(t+s,s,x)}^2d\apm dt  \notag\\
		\leq& \frac{1}{\alpha} \left( \int_\mathbb{S}  \abs{\mathcal{U}(s,s,x)}^2d\apm + \delta \int_0^T   e^{\delta t} \int_\mathbb{S}   \abs{\mathcal{U}(t+s,s,x)}^2d\apm dt \right)\leq C_5,
	\end{align}
	where $C_5=C_3+C_4.$
	Now let's consider $\abs{\nabla \mathcal{U}(t+s,s,x)}^2$ and note that
	{\small \begin{align*}
			&\frac{d}{dt}\abs{\nabla \mathcal{U}(t+s,s,x)}^2 - \tilde{\cL}\abs{\nabla \mathcal{U}(t+s,s,x)}^2 \\
			=& - a_{ij}(\partial_{ik} \mathcal{U}(t+s,s,x))(\partial_{jk} \mathcal{U}(t+s,s,x)) + 2(\partial_k b_i)(\partial_k \mathcal{U}(t+s,s,x))(\partial_i \mathcal{U}(t+s,s,x)) \\
			&+ (\partial_k a_{ij}) (\partial_k \mathcal{U}(t+s,s,x))(\partial_{ij} \mathcal{U}(t+s,s,x)).
	\end{align*}}
	Applying Young's inequality with $\varepsilon$, we have
	\begin{align*}
		&(\partial_k a_{ij}) (\partial_k \mathcal{U}(t+s,s,x))(\partial_{ij} \mathcal{U}(t+s,s,x))\\
		\leq&\frac{\varepsilon}{2}(\partial_{ij}\mathcal{U}(t+s,s,x)))^2+\frac{(\partial_k a_{ij}\partial_k\mathcal{U}(t+s,s,x)))^2}{2\varepsilon}.
	\end{align*}
	From Conditions (1) and (2), we can choose $\varepsilon$ small enough such that $-\alpha+\frac{\varepsilon}{2}<0$. It turns out that there exist strictly positive constants $C_6$ and $C_7$ such that
	\begin{align}\label{inequality of comparison between L and dt with C}
		&\frac{d}{dt}\abs{\nabla \mathcal{U}(t+s,s,x)}^2 - \tilde{\cL}\abs{\nabla \mathcal{U}(t+s,s,x)}^2\notag\\
		\leq& -C_6\abs{D^2\mathcal{U}(t+s,s,x)}^2 +C_7 \abs{\nabla \mathcal{U}(t+s,s,x)}^2.
	\end{align}
	We choose $\gamma<\delta$ and multiply $e^{\gamma t}$ on both sides of the above inequality.  It follows that
	\begin{align*}
		&\int_0^T e^{\gamma t}\int_\mathbb{S}\frac{d}{dt}\abs{\nabla \mathcal{U}(t+s,s,x)}^2 d\apm dt -  \int_0^T e^{\gamma t}\int_\mathbb{S}\tilde{\cL}\abs{\nabla \mathcal{U}(t+s,s,x)}^2d\apm dt\\
		\leq&  -C_6 \int_0^T e^{\gamma t}\int_\mathbb{S}\abs{D^2\mathcal{U}(t+s,s,x)}^2d\apm dt   +C_7\int_0^Te^{\gamma t} \int_\mathbb{S}\abs{\nabla \mathcal{U}(t+s,s,x)}^2d\apm dt.
	\end{align*}
	Following Corollary \ref{corollary of L^*q(s,x)=0}, we see that
	$\int_0^T e^{\gamma t}\int_{\mathbb{S}}\tilde{\cL}\abs{\nabla \mathcal{U}(t+s,s,x)}^2d\apm dt=0.$
	Thus
	\begin{align*}
		\int_0^T e^{\gamma t}\int_\mathbb{S}\frac{d}{dt}\abs{\nabla \mathcal{U}(t+s,s,x)}^2 d\apm dt\leq C_7\int_0^Te^{\gamma t} \int_\mathbb{S}\abs{\nabla \mathcal{U}(t+s,s,x)}^2d\apm dt.
	\end{align*}
	Now by integration by parts, we note that
	{\small \begin{align*}
			&\int_0^T e^{\gamma t}\int_\mathbb{S}\frac{d}{dt}\abs{\nabla \mathcal{U}(t+s,s,x)}^2 d\apm dt\\
			=&e^{\gamma T}\int_\mathbb{S}\abs{\nabla \mathcal{U}(T+s,s,x)}^2 d\apm - \int_\mathbb{S}\abs{\nabla \mathcal{U}(s,s,x)}^2 d\apm-\gamma\int_0^Te^{\gamma t} \int_\mathbb{S}\abs{\nabla \mathcal{U}(t+s,s,x)}^2d\apm dt.
	\end{align*}}\\
	Then apply (\ref{inequality of boundedness of first derivative of u under apm and time int}) with $\gamma$ small enough and the boundedness of $\int_{\mathbb{S}}|\nabla \tilde{\phi}|^2 d\apm$ to have 
	\begin{align*}
		& e^{\gamma T}\int_\mathbb{S}\abs{\nabla \mathcal{U}(T+s,s,x)}^2 d\apm \\
		\leq&\int_\mathbb{S}\abs{\nabla \tilde{\phi}}^2 d\apm + (\gamma+C_7)\int_0^Te^{\gamma t} \int_\mathbb{S}\abs{\nabla \mathcal{U}(t+s,s,x)}^2d\apm dt\leq  C_8.
	\end{align*}
	Thus we obtained (\ref{exponential contraction of D^pU}) for the case when $p=1$.
	Now we continue to prove the induction step in the following content. Assume that for any $k\leq m$, there exist strictly positive constants $C_k$ and $\gamma_k$ such that for any $t>0$,
	$$\frac{1}{\tau}\int_0^{\tau}\int_{\R^d}\abs{D^k \mathcal{U}(t+s,s,x)}^2 q(s,x) dxds \leq C_k \exp(-\gamma_k t).$$
	Here we need to compare the expansion of the operators $\frac{d}{dt}$ and $\tilde{\cL}$ in the following:
	\begin{align*}
		\abs{D^m \mathcal{U}(t+s,s,x)}^2 = \sum_{\abs{J}=m}(\partial_J \mathcal{U}(t+s,s,x))^2,
	\end{align*}
	where $J$ is the multi-index with length $\abs{J}=m$. The multi-indices $J_a$ and $J_b$ are introduced for the following identity,
	\begin{align*}
		&\frac{d}{dt} \abs{D^m \mathcal{U}(t+s,s,x)}^2-\tilde{\cL}\abs{D^m \mathcal{U}(t+s,s,x)}^2\\
		=& -a_{ij}\left( \partial_{J}\partial_i\mathcal{U}(t+s,s,x)\right) \left( \partial_{J}\partial_j\mathcal{U}(t+s,s,x)\right)\\
		&+ \sum_{\abs{J_a}+\abs{J_b}\leq 2m+1}  \Phi_{J_a,J_b}^J \partial_{J_a} \mathcal{U}(t+s,s,x)\partial_{J_b} \mathcal{U}(t+s,s,x).
	\end{align*}
	Here the notation $\Phi_{J_a,J_b}^J$ contains all the combinations of spatial derivatives on the functions $a$ and $b$ with respect multi-indices $J_a$ and $J_b$ under some specified $J$. It is obvious the length of $J_a$ and $J_b$ will not exceed $m+1$. The boundedness of each elements in $\Phi_{J_a,J_b}^J$ comes from Condition (1). Therefore we will always have the following result by Young's inequality,
	\begin{align*}
		&\frac{d}{dt} \abs{D^m \mathcal{U}(t+s,s,x)}^2-\tilde{\cL}\abs{D^m \mathcal{U}(t+s,s,x)}^2\\
		\leq& -C_1^m \abs{D^{m+1} \mathcal{U}(t+s,s,x)}^2 +C_2^m \sum_{k\leq m} \abs{D^{k} \mathcal{U}(t+s,s,x)}^2 .
	\end{align*}
	Then we choose a strictly positive constant $\delta_{m+1}$ small enough to proceed as in (\ref{inequality of boundedness of first derivative of u under apm and time int}). Multiplying $e^{\delta_{m+1}t}$ on both sides and integrating with respect to $\apm$, we will have 
	$$
	\int_0^{\infty} e^{\delta_{m+1} t} \left( \int_{\R^d} \abs{D^{m+1}\mathcal{U}(t+s,s)}^2 d\apm \right) dt< \infty.
	$$
	Consider a higher order 
	\begin{align*}
		&\frac{d}{dt} \abs{D^{m+1} \mathcal{U}(t+s,s,x)}^2-\tilde{\cL}\abs{D^{m+1} \mathcal{U}(t+s,s,x)}^2\\
		\leq& -C_1^{m+1} \abs{D^{m+2} \mathcal{U}(t+s,s,x)}^2 +C_2^{m+1} \sum_{k\leq {m+1}} \abs{D^{k} \mathcal{U}(t+s,s,x)}^2 .
	\end{align*}
	By choosing $\gamma_{m+1}<\delta_{m+1}$ and following the same procedure as above, we have (\ref{exponential contraction of D^pU}) for the case when $p=m+1$.
	By induction principle, we proved the above result holds for any order of spatial derivatives of $\mathcal{U}(t+s,s,x)$. 
	
	
	By (\ref{equation of definition of density function by periodic measure}), we can conclude that $q(s,x)>0$ as $p(s+\tau,s,y,x)>0$ for any $s\in\R$ and $x,y\in\R^d.$ We can also prove the continuity of $q(t,x)$ from the continuity of $p(t+\tau,t,y,x)$ in $x$. Thus the density function $q(s,x)$ is strictly positive continuous function on any ball $B=B(0,R).$ It turns out that there exists $C>0$ such that
	$$\frac{1}{\tau}\int_0^{\tau}\norm{\partial_J \mathcal{U}(t+s,s)}_{L^2(B)}^2 ds \leq \frac{C}{\tau}\int_0^{\tau}\int_{\R^d}\abs{\partial_J \mathcal{U}(t+s,s,x)}^2 q(s,x)dxds.$$
	By the Sobolev embedding $W^{k,2}(B)\hookrightarrow C(B)$ for $k>\frac{d}{2}$ (\cite{Adams-Fournier2009}), we have that
	$$\frac{1}{\tau}\int_0^{\tau}\abs{\mathcal{U}(t+s,s,x)} ds \leq \frac{1}{\tau}\int_0^{\tau}\int_B\abs{D^k\mathcal{U}(t+s,s,x)}^2 dxds \leq C_{k}\exp (-\lambda_{k} t),$$
	for 
	any $x\in B$, where $k>\frac{d}{2}$. The proof is completed.
\end{proof}

\subsection{Estimates on the average of $\mathcal{U}(t+s,s)$ in $L^2(\pi_r)$}
In Section \ref{section 3.1}, we obtained the exponential contraction of $\frac{1}{\tau}\int_0^{\tau}\abs{\mathcal{U}(t+s,s,x)} ds$ in any ball $B$ when we assumed $\int_0^\tau\int_{\R^d}\phi(x)q(s,x)dxds=0.$ To consider the behaviour outside of the ball $B$, we need to introduce the weight $\pi_r(s,x)$ with some integer $r$ determined later,
$$\pi_r(s,x) = 1/{(2+\abs{x}^2+cos(\frac{2\pi s}{\tau}))^r}.$$
We consider its gradient and partial derivatives with respect to time $s$,
\begin{align*}
	\nabla \pi_r(s,x) = -\frac{2rx}{2+\abs{x}^2+cos(\frac{2\pi s}{\tau})}\pi_r(s,x),\ \frac{\partial}{\partial s} \pi_r(s,x) = \frac{\frac{2\pi r}{\tau} sin(\frac{2\pi s}{\tau})}{2+\abs{x}^2+cos(\frac{2\pi s}{\tau})} \pi_r(s,x).
\end{align*}

In general, it is easy to see that for any multi-index $J$ and any integer $r$, there exist smooth functions $\psi_{J,r}(s,x)$ and $\psi_{s,r}(s,x)$ such that,
\begin{align*}
	\partial_J\pi_r(s,x) &= \psi_{J,r}(s,x) \pi_r(s,x),\ \frac{\partial}{\partial s}\pi_r(s,x) = \psi_{s,r}(s,x) \pi_r(s,x),
\end{align*}
where $\psi_{J,r}(s,x)\rightarrow0$ and $\psi_{s,r}(s,x)\rightarrow0$ when $\abs{x}\rightarrow +\infty$.


\begin{lemma}\label{lemma of exponential contraction of u under weight pi_r(s,x)}
	Assume Conditions (1), (2) and (3), there exist strictly positive constants $C$ and $\lambda$ such that for any $t>0$, we have
	$$\frac{1}{\tau}\int_0^{\tau}\int_{\R^d}\abs{\mathcal{U}(t+s,s,x)}^2 \pi_r(s,x)dx ds \leq C\exp(-\lambda t).$$
\end{lemma}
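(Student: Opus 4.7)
The plan is to set $V(t) := \frac{1}{\tau}\int_0^\tau\int_{\R^d}|\mathcal{U}(t+s,s,x)|^2\pi_r(s,x)\,dx\,ds$ and establish a differential inequality of the form $V'(t) \le -\kappa V(t) + Ce^{-\lambda t}$, from which the claim follows by an integrating-factor argument. Differentiating under the integral and using $\partial_t\mathcal{U} = \tilde{\mathcal{L}}\mathcal{U}$ together with the identity $\tilde{\mathcal{L}}\mathcal{U}^2 = 2\mathcal{U}\tilde{\mathcal{L}}\mathcal{U} + a_{ij}\partial_i\mathcal{U}\partial_j\mathcal{U}$, I would drop the nonpositive $-a_{ij}\partial_i\mathcal{U}\partial_j\mathcal{U}$ piece (by the ellipticity Condition~(2)) and transfer $\tilde{\mathcal{L}}$ onto $\pi_r$ by integration by parts. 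The spatial boundary terms vanish since $\mathcal{U}^2\pi_r$ decays at infinity for $r$ large enough (via Kunita's polynomial estimate (\ref{inequality of kunita's estimation}) and the polynomial decay of $\pi_r$), and the $s$-boundary contributions at $0$ and $\tau$ cancel by the $\tau$-periodicity of both $\mathcal{U}(t+s,s,x)$ and $\pi_r$ in $s$. This yields
$$V'(t) \le \frac{1}{\tau}\int_0^\tau\int_{\R^d}\mathcal{U}^2\,\tilde{\mathcal{L}}^*\pi_r\,dx\,ds.$$

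Next I would expand $\tilde{\mathcal{L}}^*\pi_r/\pi_r$ using the structural relations $\partial_J\pi_r = \psi_{J,r}\pi_r$ and $\partial_s\pi_r = \psi_{s,r}\pi_r$ recorded just before the lemma. The dominant contribution is the drift piece
$$-\tfrac{1}{\pi_r}\,b\cdot\nabla\pi_r = \frac{2r\,b(s,x)\cdot x}{2+|x|^2+\cos(2\pi s/\tau)},$$
which by the weak dissipativity Condition~(3) is at most $-2r\beta + o(1)$ as $|x|\to\infty$. All other contributions — involving $\nabla\cdot b$, second derivatives of $a$, and products of the bounded coefficients $a, \partial a$ with $\psi_{j,r}$, $\psi_{ij,r}$, $\psi_{s,r}$ (which themselves tend to $0$ at infinity) — are uniformly bounded by some constant $K(r)$ depending only on $r$. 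Choosing $r$ large enough that $-2r\beta + K(r) \le -r\beta$ outside a suitable ball $B(0,R_0)$, I obtain
$$\tilde{\mathcal{L}}^*\pi_r \le -r\beta\,\pi_r + C_r\,\pi_r\,\mathbf{1}_{B(0,R_0)}.$$

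To handle the localized remainder, I would bound $\mathcal{U}^2 \le (\sup_{B(0,R_0)}|\mathcal{U}|)\,|\mathcal{U}|$ on the ball, the supremum being controlled by a constant via the polynomial growth estimate (\ref{inequality of boundedness of semigroup}), and then invoke Lemma~\ref{lemma of exponential contraction of u for any Ball} on $B(0,R_0)$ to get
$$\tfrac{1}{\tau}\int_0^\tau\int_{B(0,R_0)}\mathcal{U}^2\pi_r\,dx\,ds \le C\int_{B(0,R_0)}\Bigl(\tfrac{1}{\tau}\int_0^\tau|\mathcal{U}(t+s,s,x)|\,ds\Bigr)dx \le C' e^{-\lambda t}.$$
Combining with the previous step yields $V'(t) \le -r\beta\,V(t) + C'e^{-\lambda t}$, and the integrating factor $e^{r\beta t}$ delivers $V(t) \le C(V(0)+1)\exp(-\min(r\beta,\lambda)\,t)$, with $V(0)<\infty$ whenever $r>N+d/2$, where $N$ is the polynomial degree bounding $\phi$.

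I expect the main obstacle to be the careful bookkeeping in the expansion of $\tilde{\mathcal{L}}^*\pi_r/\pi_r$: one must verify that every $r$-dependent factor arising from higher-order derivatives of $\pi_r$ stays uniformly bounded in $(s,x)$, so that the single $-2r\beta$ gain produced by weak dissipativity genuinely dominates all other pieces once $r$ is taken large. Ellipticity and Condition~(1) are used precisely to tame those error terms and to justify the integration-by-parts step.
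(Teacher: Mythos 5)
Your proposal follows essentially the same route as the paper: differentiate the weighted $L^2$ quantity, drop the nonpositive gradient term by ellipticity, integrate by parts to land derivatives on $\pi_r$ (using periodicity in $s$ and decay at infinity), use weak dissipativity to make the coefficient of $|\mathcal{U}|^2\pi_r$ negative outside a large ball for $r$ large, control the ball contribution via Lemma \ref{lemma of exponential contraction of u for any Ball}, and close with Gronwall/integrating factor. The only point to tighten is your bound $-2r\beta+K(r)\le -r\beta$: as you yourself note, the correct bookkeeping is that the $\psi$-terms vanish as $\abs{x}\to\infty$ for fixed $r$ while the remaining terms are bounded independently of $r$, which is exactly how the paper's $\Phi_{a,b}+\Phi_\psi$ decomposition resolves it.
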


\begin{proof}
	Recall (\ref{inequality of kunita's estimation}) to lead that
	for any integer $n\geq 0,$ it is possible to choose an integer $r_n$ such that, for any $0\leq m\leq n$, $t\geq 0$, we have
	$\abs{D^m \mathcal{U}(t+s,s,x)}\pi_{r_n}(s,x)\in L^2(\R^d).$
	We denote the multi-index $I$ for the derivative $\partial$ with the length $\abs{I}$. Consider the integer $M_I$ defined by
	$\abs{I}=[M_I-d/2],$
	and the property of the weight $\pi_{r}$, we have that there exists an integer $r_0$ such that for any $t>0$, any $r\geq r_0$ and any $m\leq M_I$,
	$D^m \left( \mathcal{U}(t+s,s,x)\pi_r(s,x)  \right)\in L^2(\R^d).$
	It is easy to see the periodicity of the function $\mathcal{U}(t+s,s,x)\pi_r(s,x)$ with respect to the initial time $s$. Note any order of its spatial derivatives are also $\tau$-periodic in $s$, so by integration by parts formula and periodicity,
	\begin{align*}
		&\int_0^{\tau}\int_{\R^d} \frac{d}{dt} \abs{\mathcal{U}(t+s,s)}^2 \pi_r dx ds\\
		=&-\int_0^{\tau}\int_{\R^d} (\partial_ib_i) \abs{\mathcal{U}(t+s,s)}^2 \pi_r dx ds -\int_0^{\tau}\int_{\R^d} b_i \abs{\mathcal{U}(t+s,s)}^2 (\partial_i\pi_r)dx ds \\
		&-\int_0^{\tau}\int_{\R^d}  \abs{\mathcal{U}(t+s,s)}^2 \left(\frac{\partial}{\partial s}\pi_r\right)dx ds\\
		&-\int_0^{\tau}\int_{\R^d} (\partial_i a_{ij})\mathcal{U}(t+s,s)(\partial_j \mathcal{U}(t+s,s))  \pi_r dx ds \\
		&-\int_0^{\tau}\int_{\R^d}  a_{ij}(\partial_i \mathcal{U}(t+s,s))(\partial_j \mathcal{U}(t+s,s)) \pi_r dx ds\\
		&-\int_0^{\tau}\int_{\R^d} a_{ij}\mathcal{U}(t+s,s)(\partial_{j} \mathcal{U}(t+s,s))  (\partial_i\pi_r)dx ds.
	\end{align*} 
	By Condition (2) and the property of the weight $\pi(s,x)$, we have that
	\small \begin{align*}
		&\int_0^{\tau}\int_{\R^d} \frac{d}{dt} \abs{\mathcal{U}(t+s,s)}^2 \pi_r dxds\\
		\leq& -\int_0^{\tau}\int_{\R^d} (\partial_ib_i) \abs{\mathcal{U}(t+s,s)}^2 \pi_rdxds  +\int_0^{\tau}\int_{\R^d} \frac{2r\cdot x\cdot b(s,x)}{2+\abs{x}^2+cos(\frac{2\pi s}{\tau})} \abs{\mathcal{U}(t+s,s)}^2 \pi_rdxds\\
		&-\int_0^{\tau}\int_{\R^d}  \abs{\mathcal{U}(t+s,s)}^2 \psi_s\pi_rdxds-\alpha \int_0^{\tau}\int_{\R^d}  \abs{\nabla \mathcal{U}(t+s,s)}^2 \pi_rdxds\\
		&+\frac{1}{2}\int_0^{\tau}\int_{\R^d} (\partial_{ij} a_{ij}) \abs{\mathcal{U}(t+s,s)}^2 \pi_rdxds + \frac{1}{2}\int_0^{\tau}\int_{\R^d} (\partial_{i} a_{ij}) \abs{\mathcal{U}(t+s,s)}^2 \psi_{j,r}\pi_rdxds\\
		&+\frac{1}{2}\int_0^{\tau}\int_{\R^d} (\partial_{j} a_{ij}) \abs{\mathcal{U}(t+s,s)}^2 \psi_{i,r}\pi_rdxds + \frac{1}{2}\int_0^{\tau}\int_{\R^d}  a_{ij} \abs{\mathcal{U}(t+s,s)}^2 \psi_{ij,r}\pi_rdxds\\
		=& \int_0^{\tau}\int_{\R^d}  \left( \Phi_{a,b}(s,x) + \Phi_{\psi}(s,x) +\frac{2r\cdot x\cdot b(s,x)}{2+\abs{x}^2+cos(\frac{2\pi s}{\tau})} \right)\abs{\mathcal{U}(t+s,s)}^2 \pi_rdxds \\
		&  -\alpha \int_0^{\tau}\int_{\R^d}  \abs{\nabla \mathcal{U}(t+s,s)}^2 \pi_rdxds,
	\end{align*}
	where $\Phi_{a,b}$ is a bounded function depending on functions $a$, $b$ and their derivatives, $\Phi_{\psi}$ is a function which could depend on functions $\psi_{I,r}$. It is easy to prove that $\Phi_{a,b}$ is independent of $r$. We also know that $\Phi_{\psi}$ tends to 0 when $\abs{x}$ goes to $\infty$. Therefore, we choose $r\geq r_0$ large enough to obtain,
	\begin{align}\label{inequality of derivatives outside of ball}
		\limsup_{\abs{x}\rightarrow \infty}\left( \Phi_{a,b}(s,x) + \Phi_{\psi}(s,x) +\frac{2r\cdot x\cdot b(s,x)}{2+\abs{x}^2+cos(\frac{2\pi s}{\tau})} \right) <0.
	\end{align}
	Now choosing the ball $B=B(0,R)$ with $R$ being large enough, which depends on the integer $r$, we have the following result from (\ref{inequality of derivatives outside of ball}),
	\begin{align*}
		&\int_0^{\tau}\int_{\R^d}  \left( \Phi_{a,b}(s,x) + \Phi_{\psi}(s,x) +\frac{2r\cdot x\cdot b(s,x)}{2+\abs{x}^2+cos(\frac{2\pi s}{\tau})} \right)\abs{\mathcal{U}(t+s,s)}^2 \pi_rdxds\\
		\leq & C_1 \int_0^{\tau}\int_{B} \abs{\mathcal{U}(t+s,s)}^2 \pi_rdxds -C_2  \int_0^{\tau}\int_{B^c} \abs{\mathcal{U}(t+s,s)}^2 \pi_rdxds \\
		\leq&( C_1+C_2)\int_0^{\tau}\int_{B} \abs{\mathcal{U}(t+s,s)}^2 \pi_rdxds - C_2\int_0^{\tau}\int_{\R^d} \abs{\mathcal{U}(t+s,s)}^2 \pi_rdxds,
	\end{align*}
	where $C_1,C_2>0$ are constants.
	Therefore, by Lemma \ref{lemma of exponential contraction of u for any Ball},
	\begin{align*}
		&\frac{d}{dt}\int_0^{\tau}\int_{\R^d}  \abs{\mathcal{U}(t+s,s)}^2 \pi_r dxds\leq - C_2\int_0^{\tau}\int_{\R^d} \abs{\mathcal{U}(t+s,s)}^2 \pi_rdxds+ C_3\exp (-\lambda t).
	\end{align*}
	The result follows then from the Gronwall's inequality.
\end{proof}

\subsection{Exponential decay of the spatial derivatives of the solution}

\begin{theorem}\label{theorem of exponential contraction of spacial derivatives for u}
	Assume Conditions (1), (2) and (3), and $\phi\in C_p^{\infty}$. Then 
	for any multi-index $I$, there exists an integer $k_I$, strictly positive constants $\Gamma_I$ and $\gamma_I$ 
	such that 
	$\frac{1}{\tau}\int_0^{\tau}\abs{\partial_I \mathcal{U}(t+s,s,x)} ds\leq \Gamma_I (1+\abs{x}^{k_I})\exp(-\gamma_I t).$
\end{theorem}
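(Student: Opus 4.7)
I would bootstrap Lemma~\ref{lemma of exponential contraction of u under weight pi_r(s,x)} to a weighted $L^2$ decay for every spatial derivative of $\mathcal{U}$, and then convert that weighted estimate into the advertised polynomially-weighted pointwise bound via a local Sobolev embedding on unit balls.

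\emph{Step 1: weighted $L^2$ decay for all orders.} I would prove by induction on $p$ that for every $p\in\N$ there exist an integer $r_p$ and positive constants $C_p,\gamma_p$ with
\begin{equation*}
\frac{1}{\tau}\int_0^\tau\int_{\R^d}\abs{D^p\mathcal{U}(t+s,s,x)}^2\pi_{r_p}(s,x)\,dx\,ds\leq C_p e^{-\gamma_p t}.
\end{equation*}
The base case $p=0$ is Lemma~\ref{lemma of exponential contraction of u under weight pi_r(s,x)}. For the induction step, starting from the pointwise identity for $\frac{d}{dt}\abs{D^{p+1}\mathcal{U}}^2-\tilde{\cL}\abs{D^{p+1}\mathcal{U}}^2$ used inside the proof of Lemma~\ref{lemma of exponential contraction of u for any Ball}, I would multiply by $\pi_{r_{p+1}}$ for a sufficiently large $r_{p+1}\ge r_p$, multiply by $e^{\delta_{p+1}t}$, and integrate over $[0,\tau)\times\R^d\times[0,T]$. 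The integration by parts against $\pi_{r_{p+1}}$ (rather than against $q$) produces extra terms of exactly the same form as those analysed in the proof of Lemma~\ref{lemma of exponential contraction of u under weight pi_r(s,x)}: the boundary-at-infinity terms vanish by Kunita's bound (\ref{inequality of kunita's estimation}) once $r_{p+1}$ is large, the weakly dissipative contribution $\tfrac{2r_{p+1}\,x\cdot b(s,x)}{2+\abs{x}^2+\cos(2\pi s/\tau)}$ is coercive outside a large ball by Condition~(3), and ellipticity yields a coercive term $-\alpha\int\abs{D^{p+2}\mathcal{U}}^2\pi_{r_{p+1}}$. The remaining cross terms $\Phi^J_{J_a,J_b}\partial_{J_a}\mathcal{U}\,\partial_{J_b}\mathcal{U}$ with $\abs{J_a},\abs{J_b}\leq p+1$ are absorbed by Young's inequality and the inductive hypothesis, and a Gronwall step parallel to the one closing Lemma~\ref{lemma of exponential contraction of u under weight pi_r(s,x)} yields the claim for $p+1$.

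\emph{Step 2: pointwise conclusion via Sobolev embedding.} Fix $x\in\R^d$ and a multi-index $I$, pick any integer $k>d/2$, and set $M=\abs{I}+k$. The embedding $W^{M,2}(B(x,1))\hookrightarrow C(\overline{B(x,1)})$, whose constant is independent of $x$ by translation invariance, gives
\begin{equation*}
\abs{\partial_I\mathcal{U}(t+s,s,x)}\leq C\sum_{\abs{J}\leq M}\norm{\partial_J\mathcal{U}(t+s,s,\cdot)}_{L^2(B(x,1))}.
\end{equation*}
Since $\abs{y}^2\leq 2(1+\abs{x}^2)$ for $y\in B(x,1)$, one has $\pi_{r_M}(s,y)\ge c(1+\abs{x}^2)^{-r_M}$ on that ball, hence
\begin{equation*}
\norm{\partial_J\mathcal{U}(t+s,s,\cdot)}_{L^2(B(x,1))}^2\leq c^{-1}(1+\abs{x}^2)^{r_M}\int_{B(x,1)}\abs{\partial_J\mathcal{U}(t+s,s,y)}^2\pi_{r_M}(s,y)\,dy.
\end{equation*}
Averaging in $s$, applying the Jensen bound $\bigl(\tfrac{1}{\tau}\int_0^\tau\abs{\partial_I\mathcal{U}}\,ds\bigr)^2\leq \tfrac{1}{\tau}\int_0^\tau\abs{\partial_I\mathcal{U}}^2\,ds$, invoking Step~1 with $p=M$, and taking square roots delivers the theorem with $k_I=r_M$ and $\gamma_I=\gamma_M/2$.

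The main obstacle is Step~1: one must track the combinatorial zoo of cross-derivative products $\partial_{J_a}\mathcal{U}\,\partial_{J_b}\mathcal{U}$ generated by $[\tfrac{d}{dt}-\tilde{\cL}]\abs{D^{p+1}\mathcal{U}}^2$ after integration by parts against $\pi_{r_{p+1}}$, and choose $r_{p+1}$ large enough that the weakly dissipative contribution from Condition~(3) dominates both the bounded functions $\Phi_{a,b}$ and the vanishing-at-infinity functions $\Phi_\psi$ coming from derivatives of $\pi_{r_{p+1}}$, uniformly in $t$.
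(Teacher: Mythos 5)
Your route is the paper's route: a $\pi_r$-weighted $L^2$ decay estimate for every order of spatial derivative, obtained by induction and by letting the dissipative term $\frac{2r\,x\cdot b(s,x)}{2+\abs{x}^2+\cos(2\pi s/\tau)}$ dominate for $r$ large, followed by a Sobolev embedding with the weight. Your Step 2 is a sound and usefully explicit rendering of the paper's one-line appeal to a ``weighted Sobolev embedding with $\pi_r(s,x)d\tilde{x}$'', and it correctly extracts the polynomial factor $k_I$ from the weight.

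There is, however, a gap in the induction step of your Step 1. After Young's inequality, the cross terms $\Phi^J_{J_a,J_b}\partial_{J_a}\mathcal{U}\,\partial_{J_b}\mathcal{U}$ produce not only derivatives of order $\leq p$ but also the top-order term $C\abs{D^{p+1}\mathcal{U}}^2$ itself, which your inductive hypothesis (weighted decay for orders $\leq p$) does not control. Likewise, when you integrate $\tilde{\cL}\abs{D^{p+1}\mathcal{U}}^2\pi_{r_{p+1}}$ by parts, the resulting coefficient $\Phi_{a,b}+\Phi_\psi+\frac{2r_{p+1}x\cdot b}{2+\abs{x}^2+\cos(2\pi s/\tau)}$ is only guaranteed negative \emph{outside} a large ball (inside, $x\cdot b$ may be positive and the term even grows with $r_{p+1}$), so a ball contribution of the form $C\int_B\abs{D^{p+1}\mathcal{U}}^2\pi_{r_{p+1}}$ survives. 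Writing $F_{p+1}(t)=\frac1\tau\int_0^\tau\int_{\R^d}\abs{D^{p+1}\mathcal{U}}^2\pi_{r_{p+1}}dx\,ds$, your ``Gronwall step parallel to Lemma \ref{lemma of exponential contraction of u under weight pi_r(s,x)}'' then only yields $\frac{d}{dt}F_{p+1}\leq CF_{p+1}+(\text{decaying terms})$, which does not close: in Lemma \ref{lemma of exponential contraction of u under weight pi_r(s,x)} the analogous ball term was killed by Lemma \ref{lemma of exponential contraction of u for any Ball}, an input of the \emph{same} order as the quantity being estimated, and you have no analogue at order $p+1$.

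The paper supplies exactly the missing inputs, and your plan needs at least one of them. First, the induction carried out inside Lemma \ref{lemma of exponential contraction of u for any Ball} establishes the $q$-weighted decay (\ref{exponential contraction of D^pU}) for \emph{every} derivative order; since $q$ is continuous and strictly positive, hence bounded below on balls, this gives exponential decay of $\int_0^\tau\int_B\abs{D^m\mathcal{U}}^2dx\,ds$ for every $m$, which is what controls the ball contributions such as (\ref{inequality of exponential contraction for L on u}) and its higher-order analogues (this is what the paper means by ``the induction guaranteed first the exponential contraction in any ball''). Second, the paper does not run a direct Gronwall at order $p+1$: it first uses the order-$p$ inequality, multiplied by $e^{\delta t}\pi_r$ and integrated in time, to extract the bound $\int_0^T e^{\delta t}\int_{\mathbb{S}}\abs{D^{p+1}\mathcal{U}}^2\pi_r\,d\tilde{x}\,dt\leq C$ from the elliptic coercive term, and only then integrates the order-$(p+1)$ inequality with a smaller exponent $\gamma<\delta$, so that the top-order terms on the right are controlled by this time-integrated bound rather than by the inductive hypothesis. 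With these two ingredients added, your Step 1 becomes the paper's argument; as written, the induction step would fail at the top-order terms.
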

\begin{proof}
	The process of the proof is similar to Lemma \ref{lemma of exponential contraction of u for any Ball}. We first apply induction method on each order of spatial derivatives of $\mathcal{U}(t+s,s,x)$. 
	It guaranteed first the exponential contraction in any ball $B(0,R)$. Now we consider the behaviour outside of the ball to have,
	\begin{align*}
		&\int_{B^c}\tilde{\cL}\abs{\mathcal{U}(t+s,s)}^2\pi_r d\tilde{x}\\
		=&\int_{B^c}\left( \Phi_{a,b} + \Phi_{\psi} +\frac{2r\cdot x\cdot b}{2+\abs{x}^2+cos(\frac{2\pi s}{\tau})} \right)\abs{\mathcal{U}(t+s,s)}^2\pi_r d\tilde{x}<0,
	\end{align*}
	if we choose the ball large enough. 
	Thus we have some positive $C_0$ and $\lambda_0$ such that 
	\begin{align}\label{inequality of exponential contraction for L on u}
		\int_\mathbb{S}\tilde{\cL}\abs{\mathcal{U}(t+s,s)}^2\pi_r d\tilde{x}<\int_{B}\tilde{\cL}\abs{\mathcal{U}(t+s,s)}^2\pi_r d\tilde{x}\leq C_0 \exp(-\lambda_0 t).
	\end{align}
	On the other hand, we integrate with respect to $d\tilde{x}$ with weight $\pi_r,$
	multiply $e^{\delta t}$ and integrate with respect to $t$ from 0 to $T$ to have
	{\small \begin{align*}
			&e^{\delta T}\int_\mathbb{S}\abs{\mathcal{U}(T+s,s)}^2\pi_r d\tilde{x}+C\int_0^T e^{\delta t}\int_\mathbb{S}\abs{\nabla \mathcal{U}(t+s,s)}^2\pi_r d\tilde{x}dt\\
			\leq &\int_\mathbb{S}\abs{\tilde{\phi}(\tilde{x})}^2\pi_r d\tilde{x} + \delta\int_0^Te^{\delta t} \int_\mathbb{S}\abs{\mathcal{U}(t+s,s)}^2\pi_r d\tilde{x} dt + \int_0^Te^{\delta t} \int_\mathbb{S}\tilde{\cL}\abs{\mathcal{U}(t+s,s)}^2\pi_r d\tilde{x} dt.
	\end{align*}}
	By the estimates (\ref{inequality of exponential contraction for general t}) and (\ref{inequality of exponential contraction for L on u}), we can choose constant $\delta$ small enough to obtain
	\begin{align*}
		\int_0^T e^{\delta t}\int_\mathbb{S}\abs{\nabla \mathcal{U}(t+s,s)}^2\pi_r d\tilde{x}dt\leq C.
	\end{align*}
	Similarly we consider (\ref{inequality of comparison between L and dt with C}) to have
	\begin{align*}
		&e^{\gamma T}\int_\mathbb{S}\abs{\nabla \mathcal{U}(T+s,s)}^2\pi_r d\tilde{x}+C\int_0^T e^{\gamma t}\int_\mathbb{S}\abs{D^2 \mathcal{U}(t+s,s)}^2\pi_r d\tilde{x}dt\\
		\leq &\int_\mathbb{S}\abs{\nabla \tilde{\phi}(\tilde{x})}^2\pi_r d\tilde{x} + (\gamma+C_2)\int_0^Te^{\gamma t} \int_\mathbb{S}\abs{\nabla \mathcal{U}(t+s,s)}^2\pi_r d\tilde{x} dt \\
		&+ \int_0^Te^{\gamma t} \int_\mathbb{S}\tilde{\cL}\abs{\nabla \mathcal{U}(t+s,s)}^2\pi_r d\tilde{x} dt,
	\end{align*}
	which gives us the conclusion that
	$\int_{\mathbb{S}}\abs{\nabla \mathcal{U}(t+s,s)}^2\pi_r d\tilde{x}\leq C e^{-\gamma t}.$
	It is easy to repeat the process for any $m\in\N$ with positive constants $C_m$ and $\gamma_m$ to obtain 
	$\int_{\mathbb{S}}\abs{D^m \mathcal{U}(t+s,s)}^2\pi_r d\tilde{x}\leq C_m e^{-\gamma_m t}.$
	Then we proved the conclusion of the theorem by the weighted Sobolev embedding Theorem with $\pi_r(s,x)d\tilde{x}$ instead of the the density function of average periodic measure $q(s,x)d\tilde{x}$ .
\end{proof}
The following remark applies to general case without assumption (\ref{equality of annihilation of initial condition}).
\begin{remark}
	The proof of the previous theorem also gives us the result that there exist some integer $l\in\N$ and constants $\Gamma>0,$ $\gamma>0$, such that for any $t$ and $x$, 
	\begin{align}\label{inequality of convergence from initial distribution to periodic measure}
		\abs{\frac{1}{\tau}\int_0^{\tau} \mathcal{U}(t+s,s,x)ds  -  \int_\mathbb{S}\tilde{\phi}(\tilde{x})d\apm(\tilde{x})} \leq \Gamma(1+\abs{x}^l)\exp(-\gamma t).
	\end{align}
	
\end{remark}

\section{Ergodicity for discretized semi-flows of Euler-Maruyama scheme}
We consider Euler-Maruyama numerical scheme with step size $\dt=\frac{\tau}{N}>0$ for SDE (\ref{equations of SDE}):
\begin{eqnarray}\label{equation of scheme of discrete random periodic solution by steps for weak approximation}
	\hxit{(i+1)}  =\hxit{i}+b(i\dt,\hxit{i})\dt+\sigma(\hxit{i})\dW_i,
\end{eqnarray}
with $\hx_{-k\tau}^{-k\tau}=x$, where $i=0,1,2,\ldots$, $\dW_i=W_{-k\tau+(i+1)\dt}-W_{-k\tau+i\dt}$.
There are several methods to generate the stochastic increment $\dW_i$. But in order to obtain the ergodicity of numerical schemes, in this paper we apply Gaussian distribution in  the approximation i.e. $\dW=\sqrt{\dt}\mathcal{N}(0,1)$.
Denote transition probability
$$\hat P(-k\tau+i\dt,-k\tau,x,\Gamma)=\mathbb{P}\left(\hxit{i}\in\Gamma\right).$$
It is easy to see that 
$\hat P(-k\tau+i\dt,-k\tau,x,\Gamma)=\hat P(i\dt,0,x,\Gamma).$ One can easily extend the numerical scheme (\ref{equation of scheme of discrete random periodic solution by steps for weak approximation}) to $\hx_{i\dt}^{j\dt}$, $i\geq j$, $j\in\Z$ with $\hx_{j\dt}^{j\dt}=x$ and its transition probability to $\hat P(i\dt,j\dt,x,\cdot)$, $i\geq j$, $j\in\Z.$ The corresponding semigroup $\hat{\mathcal{T}}(i\dt,j\dt)$, $i\geq j$, $j\in\Z$ can be generated from the transition probability in a standard way. A measure-valued function $\hat \rho:\Z\to\mathbb{P}(\R^d)$ is called a periodic measure of the semigroup $\hat{\mathcal{T}}(i\dt,j\dt)$ if 
$$\int_{\R^d}\hat P(i\dt,j\dt,x,\Gamma)\hat \rho_j(dx)=\hat \rho_i(\Gamma)$$ and
$$\hat \rho_{i+N}=\hat \rho_i$$
for all $i\in\Z.$ Recall here $N=\tau/\dt.$
\begin{remark}
	By Condition (1), if the function $b(t,0)$ is bounded for any $t>0$, then $b$ is of linear growth $\abs{b(t,X_t)}\leq L\abs{X_t}+C$, where $L,C>0$.
\end{remark}
\begin{remark}
	Under Condition (3), the conclusion in the following proposition still holds for sufficient small step size $\dt<\dt_c$ with some $\dt_c>0$ that may depend on the growth order of the test function. In order to obtain a uniform $\dt_c$, we consider the following slightly stronger condition. But in the case of one-dimension, Condition (3') is the same as Condition (3). This means that in the case of one-dimension and Condition (3), a uniform $\dt<\dt_c$ is obtained with respect to all polynomial growth test functions.
\end{remark}
{\bf Condition (3')}
{\it For all $i=1,2,\ldots,d$, there exist constants $\beta_i>0$ and $C_{\beta_i}>0$ such that for any $t\in\R^+$ and any $x_i\in\R$,
	$x_i\cdot b_i(t,x)\leq -\beta_i\abs{x_i}^2+C_{\beta_i}.$}
\begin{proposition}\label{proposition of bound of n-th moments of discrete process for weak approximation}
	Assume Conditions (1), (3') and the boundedness of $b(t,0)$ for any $t>0$, then for any integer $p$ and any $\delta>0$, there exist constants $C_p,\hat{C}_p,\gamma,\hat\gamma>0$ such that for any $0<\dt\leq\frac{2}{L+\delta}$, $x\in\R^d$ and $n\in\N$, 
	$$
	\E\abs{\hxit{n}}^p\leq C_p\rbrac{1+\abs{x}^p \exp(-\gamma p n\dt)},
	$$
	and
	\begin{align}\label{inequality of lyapunov condition}
		\E\sbrac{\abs{\hxit{(i+1)}}^p\bigg\vert\hat{\mathcal{F}}_i}\leq (1-\hat\gamma\dt)\abs{\hxit{i}}^p+\hat C_p,
	\end{align}
	where $\hat{\mathcal{F}}_i=\mathcal{F}_{-k\tau+i\dt}$.
\end{proposition}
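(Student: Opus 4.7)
The plan is to prove the one-step Lyapunov inequality (\ref{inequality of lyapunov condition}) first and then deduce the global moment bound by iteration. Once the Lyapunov estimate is established with strictly positive $\hat\gamma$ and finite $\hat C_p$, taking unconditional expectations and iterating yield
\begin{align*}
\E\left|\widehat{X}_{-k\tau+n\dt}^{-k\tau}\right|^p \leq (1-\hat\gamma\dt)^n |x|^p + \hat C_p\sum_{j=0}^{n-1}(1-\hat\gamma\dt)^j,
\end{align*}
and the elementary bounds $(1-\hat\gamma\dt)^n\leq e^{-\hat\gamma n\dt}$ together with summation of the geometric series give the first inequality of the proposition, after absorbing $\hat C_p/(\hat\gamma\dt)$ into the constant $C_p$ and relating $\hat\gamma$ to $\gamma p$.

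To derive the one-step estimate, I would substitute the Euler-Maruyama recursion into
\begin{align*}
\left|\widehat{X}_{i+1}\right|^p = \left(|\widehat{X}_i|^2 + 2\widehat{X}_i\cdot(b\dt + \sigma\dW_i) + |b\dt + \sigma\dW_i|^2\right)^{p/2}
\end{align*}
and expand by the binomial theorem. Using independence of $\dW_i$ from $\hat{\mathcal{F}}_i$ and the Gaussian moment identities $\E|\dW_i|^{2k}=c_{d,k}\dt^k$ (odd moments vanishing), the conditional expectation reduces to a polynomial in $\dt$ whose coefficients are polynomial in $\widehat{X}_i$, $b(i\dt,\widehat{X}_i)$, and $\sigma(\widehat{X}_i)$. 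Condition (3') supplies the crucial negative sign through $\widehat{X}_i\cdot b\leq -\beta|\widehat{X}_i|^2+C$ componentwise, and Young's inequality absorbs all lower powers $|\widehat{X}_i|^{p-j}$ with $j\geq 1$ into $\varepsilon|\widehat{X}_i|^p + C_{p,\varepsilon}$, turning the $\dt^1$-contribution into a term of the form $-\hat\gamma\dt|\widehat{X}_i|^p$ plus a finite constant.

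The main obstacle is that the full binomial expansion produces higher-order $\dt^k$ terms involving $|b|^{2k}\dt^{2k}$; by the linear growth bound $|b|\leq L|\widehat{X}_i|+M$ from the Remark, these each contribute pieces of order $(L\dt)^{2k}|\widehat{X}_i|^p$, and summed against the binomial prefactors they accumulate into a multiplicative factor of essentially $(1+L\dt)^p$ on $|\widehat{X}_i|^p$. For the dissipative contraction to dominate this growth uniformly in the polynomial order $p$, the step size must satisfy the stated bound $\dt\leq 2/(L+\delta)$; the stronger componentwise Condition (3'), rather than the aggregated Condition (3), is precisely what keeps the contraction threshold independent of $p$, since it allows the dissipative term to act separately on every coordinate and match the $L$-growth in the linear factor. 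Once the bookkeeping is arranged so that the coefficient of $|\widehat{X}_i|^p$ in $\E[|\widehat{X}_{i+1}|^p\,|\,\hat{\mathcal{F}}_i]$ is at most $1-\hat\gamma\dt$ with a bounded remainder, the Lyapunov inequality (\ref{inequality of lyapunov condition}) follows and the exponential moment bound is obtained by the iteration described above.
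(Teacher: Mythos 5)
Your overall architecture (establish the one-step Lyapunov estimate, then iterate and sum the geometric series) is the same as the paper's, and that iteration step is fine. The genuine gap is in how you produce the one-step contraction. You use the dissipativity only through the aggregated inner-product inequality $\widehat{X}_i\cdot b\le-\beta\abs{\widehat{X}_i}^2+C$ for the order-$\dt$ term, and the linear growth bound $\abs{b}\le L\abs{\widehat{X}_i}+M$ for every higher-order term. These two inequalities, used separately, cannot yield a coefficient $\le 1-\hat\gamma\dt$ in front of $\abs{\widehat{X}_i}^p$ for step sizes as large as $2/(L+\delta)$. Already at $p=2$ the decoupled bound reads $1-2\beta\dt+L^2\dt^2$ (plus constants), which exceeds $1$ as soon as $\dt>2\beta/L^2$; for instance $b(x)=-Lx$ in one dimension is consistent with $xb\le-\beta x^2$ for a tiny $\beta$, and at $\dt\approx 2/L$ your estimates only give a factor close to $5$, even though the true map $x\mapsto(1-L\dt)x$ is contracting there. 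Your own bookkeeping makes the problem explicit: the higher-order terms accumulate into a factor of order $(1+L\dt)^p\approx 3^p$ when $L\dt$ is of order $1$--$2$, and this is not dominated by a dissipative gain of order $p\beta\dt$. So your route proves the proposition only for $\dt\lesssim\beta/L^2$ (and with a $p$-dependent threshold at that), not for the stated uniform threshold $2/(L+\delta)$ --- but obtaining that uniform threshold is precisely the point of the proposition, as the Remark preceding it explains. Note also that in your expansion only the aggregated Condition (3) ever enters; Condition (3') is invoked in words but never used in the computation, and the binomial expansion of $(\cdot)^{p/2}$ as written requires $p$ even (the paper handles odd $p$ by Cauchy--Schwarz from the even case).

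The paper closes this gap with a sharper pointwise, coordinatewise argument, which is where (3') actually works. In one dimension, on $\{\abs{x}>\sqrt{C_\beta/\beta}\}$, dissipativity bounds the drift map from above, $x+b(t,x)\dt\le(1-\beta\dt)x+\sqrt{\beta C_\beta}\,\dt$, while the Lipschitz bound prevents overshoot past the origin from below, $x+b(t,x)\dt\ge(1-L\dt)x-C\dt$; hence $\abs{x+b(t,x)\dt}\le\max\{\abs{1-\beta\dt},\abs{1-L\dt}\}\abs{x}+C_1\dt\le(1-\hat\gamma_1\dt)\abs{x}+C_1\dt$ with $\hat\gamma_1=\min\{\beta,\delta\}$, exactly when $\dt\le 2/(L+\delta)$. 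This retains the correlation between the size and the sign of $b$ that your decoupled estimates discard. One then raises this deterministic contraction to the $p$-th power, adds the Gaussian increment through conditional moments, treats the bounded region $\{\abs{x}\le\sqrt{C_\beta/\beta}\}$ separately with an indicator split, and finally applies (3') coordinate by coordinate to pass to $\R^d$, since the interval argument is unavailable for the aggregated inner product in several dimensions. To repair your proof you would need to replace your expansion step by an estimate of this pointwise type; as written, the claimed dominance of the dissipative term fails for the admissible step sizes.
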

\begin{proof}
	We first consider the one-dimensional case. Condition (3'), which is the same as Condition (3) in this case,  implies that for any $\abs{x}>\sqrt{\frac{C_\beta}{\beta}}$, $x\cdot b(t,x)\leq-\beta \abs{x}^2 + C_\beta< 0.$
	It then follows that when $x>\sqrt{\frac{C_\beta}{\beta}}$, 
	$$(1-L\dt)x-C\dt\leq x+b(t,x)\dt\leq (1-\beta\dt)x+\frac{C_\beta\dt}{x}\leq (1-\beta\dt)x+\sqrt{\beta C_\beta}\dt.$$
	Thus,
	\begin{eqnarray}\label{20204a}
		\abs{x+b(t,x)\dt}\leq\max\{\abs{1-\beta\dt},\abs{1-L\dt}\}\abs{x}+C_1\dt,
	\end{eqnarray}
	where $C_1$ is independent of $\dt$. One can obtain the same result for $x<-\sqrt{\frac{C_\beta}{\beta}}.$
	It is not hard to verify that $L\geq \beta$. Then, for ${\hat\gamma}_1=\min\{\beta,\delta\}>0$, we can see that for $0<\dt\leq\frac{2}{L+\delta}$,  
	\begin{eqnarray}\label{20204b}
		\max\{\abs{1-\beta\dt},\abs{1-L\dt}\}<1-{\hat\gamma}_1\dt.
	\end{eqnarray}
	It then follows from (\ref{20204a}) and (\ref{20204b}) that 
	\begin{eqnarray}\label{20204c}
		\abs{x+b(t,x)\dt}\leq\abs{1-{\hat\gamma}_1\dt}\abs{x}+C_1\dt.
	\end{eqnarray}
	Fix any sufficiently small $\hat \varepsilon>0$, choose $\varepsilon_k$ such that $\frac{p-k}{p}\varepsilon_k^{\frac{p}{p-k}}=\hat\varepsilon^k$. Now 
	for any given integer $p$, by Young's inequality with positive $\varepsilon_k$, which will be fixed later 
	\begin{eqnarray}\label{20204d}
		\abs{1-{\hat\gamma}_1\dt}^{p-k}\abs{x}^{p-k}C_1^k
		\leq
		\abs{1-{\hat\gamma}_1\dt}^{p}\abs{x}^{p}\left(\frac{p-k}{p}\varepsilon_k^{\frac{p}{p-k}}\right) +{(k C_1^p)}/{(p\varepsilon_k^{\frac{p}{k}})},
	\end{eqnarray}
	it then follows from (\ref{20204c}) and (\ref{20204d}) that 
	\begin{align*}
		&\abs{x+b(t,x)\dt}^p\\
		\leq&
		\abs{1-{\hat\gamma}_1\dt}^p\abs{x}^p+\sum_{k=1}^{p-1}\binom{p}{k}\abs{1-{\hat\gamma}_1\dt}^{p-k}\abs{x}^{p-k}C_1^k(\dt)^k+(C_1\dt)^p\\
		\leq&\left(1+\sum_{k=1}^{p-1}\binom{p}{k}(\dt)^k \hat\varepsilon^k \right)\abs{1-{\hat\gamma}_1\dt}^p\abs{x}^p+\sum_{k=1}^{p-1}\binom{p}{k}(\dt)^k\frac{k C_1^p}{p\varepsilon_k^{\frac{p}{k}}}  +(C_1\dt)^p.
	\end{align*}
	Now we choose $\hat\varepsilon$ small enough to obtain 
	$$\left(1+\sum_{k=1}^{p-1}\binom{p}{k}(\dt)^k\hat\varepsilon^k\right)\abs{1-{\hat\gamma}_1\dt}^p<(1+\hat\varepsilon\dt)^p\abs{1-{\hat\gamma}_1\dt}^p\leq\abs{1-{\hat\gamma}_2\dt}^p<1,$$
	with some constant ${\hat\gamma}_2>0$ being independent of $\dt$. 
	Then for any fixed $p$, 
	\begin{align*}
		\abs{x+b(t,x)\dt}^p
		\leq
		\abs{1-{\hat\gamma}_2\dt}^p\abs{x}^p+C_2\dt,
	\end{align*}
	with some constant $C_2$ independent of $\dt.$
	Denote by $A=\left\{x:\abs{x}\leq\sqrt{\frac{C_\beta}{\beta}}\right\}$. If the conclusion of this proposition holds for any even $p$, one can obtain the result for odd $p$ by
	\begin{align*}
		\E\abs{\hxit{n}}^p\leq &\sqrt{\E\abs{\hxit{n}}^{2p}}\leq \sqrt{C_{2p}\rbrac{1+\abs{x}^{2p} \exp(-2\gamma p n\dt)}}\\
		\leq& \sqrt{C_{2p}}\rbrac{1+\abs{x}^{p} \exp(-\gamma p n\dt)},
	\end{align*}
	with coefficient $C_p=\sqrt{C_{2p}}.$
	Then we only consider the cases where $p$ is even in the following. For this we apply the same argument using Young's inequality as above on (\ref{equation of scheme of discrete random periodic solution by steps for weak approximation}) and conditional expectation to have
	\begin{align*}
		&\E\sbrac{\abs{\hxit{(i+1)}}^pI_{A^c}\rbrac{\hxit{i}} \bigg\vert \hat{\mathcal{F}}_i}\\
		=&I_{A^c}\rbrac{\hxit{i}} \left[\rbrac{\hxit{i}+b(i\dt,\hxit{i})\dt}^p\right.\\
		&\left .+\sum_{l=1}^{p-1}\binom{p}{l}\rbrac{\hxit{i}+b(i\dt,\hxit{i})\dt}^{p-l}\rbrac{\sigma(\hxit{i})}^l\E\sbrac{\rbrac{\dW_i}^{l}\bigg\vert \hat{\mathcal{F}}_i}\right .\\
		&\left.+\rbrac{\sigma(\hxit{i})}^p\E\sbrac{\rbrac{\dW_i}^{p}\bigg\vert \hat{\mathcal{F}}_i}\right ]\\
		\leq&\left[\left(1+\hat \varepsilon\dt\right)^{\frac{p}{2}} \abs{1-\hat\gamma_{2}\dt}^p\abs{\hxit{i}}^{p}+C_3\dt\right]I_{A^c}\rbrac{\hxit{i}}\\
		\leq
		&
		\rbrac{\abs{1-\hat\gamma_{3}\dt}^{\frac{p}{2}}
			\abs{\hxit{i}}^{p}+C_3\dt} I_{A^c}\rbrac{\hxit{i}},
	\end{align*}
	with some constant $C_3$ independent of $\dt$, where $\hat\varepsilon$ is chosen small enough such that $(1+\hat\varepsilon\dt)^{\frac{p}{2}} \abs{1-\hat\gamma_{2}\dt}^p\leq \abs{1-\hat\gamma_{3}\dt}^{\frac{p}{2}}
	<1$ for some $\hat \gamma _3>0$. Moreover by linear growth condition of $b$ and $\abs{1+L\dt}\leq 1+L\cdot\frac{2}{L}=3$, we can obtain
	\begin{align*}
		\E\sbrac{\abs{\hxit{(i+1)}}^p I_{A}\rbrac{\hxit{i}}\bigg\vert \hat{\mathcal{F}}_i}
		\leq
		\rbrac{3^p\rbrac{\frac{C_\beta}{\beta}}^{\frac{p}{2}}+C} I_{A}\rbrac{\hxit{i}},
	\end{align*}
	where $L$ is the bound of function $b$'s first derivative (or coefficient of global Lipschitz).
	We combine the above two estimates to obtain
	\begin{align*}
		\E\sbrac{\abs{\hxit{(i+1)}}^p\bigg\vert\hat{\mathcal{F}}_i}
		\leq&\abs{1-\hat\gamma_{3}\dt}^{\frac{p}{2}}\abs{\hxit{i}}^{p}+\hat C_{p},
	\end{align*}
	with $\hat C_{p}=\max\left\{C_{3}\dt,3^p\rbrac{\frac{C_\beta}{\beta}}^{\frac{p}{2}}+C\right\}$ and $\hat\gamma=\hat\gamma_3$ 
	Therefore,
	\begin{align*}
		&\abs{1-\hat\gamma_{3}\dt}^{-{\frac{p}{2}}n}\E\sbrac{\abs{\hxit{n}}^p}\\
		=&\abs{x}^p+\sum_{i=1}^{n}\left\{\abs{1-\hat\gamma_{3}\dt}^{-{\frac{p}{2}}i}\E{\abs{\hxit{i}}^p}-\abs{1-\hat\gamma_{3}\dt}^{-{\frac{p}{2}}(i-1)}\E{\abs{\hxit{(i-1)}}^p}\right\}\\
		=&\abs{x}^p+\sum_{i=1}^{n}\abs{1-\hat\gamma_{3}\dt}^{-{\frac{p}{2}}i}\E\sbrac{\E\left(\abs{\hxit{i}}^p-\abs{1-\hat\gamma_{3}\dt}^{\frac{p}{2}}\abs{\hxit{(i-1)}}^p\bigg\vert \hat{\mathcal{F}}_{i-1}\right)}\\
		\leq& \abs{x}^p +  \hat C_{p}\sum_{i=1}^{n}  \abs{1-\hat\gamma_{3}\dt}^{-{\frac{p}{2}}i}
		\leq \abs{x}^p + C_{p},
	\end{align*}
	where $C_p=\frac{\hat C_{p}}{1-\abs{1-\hat\gamma_{3}\dt}^{\frac{p}{2}}} .$ Finally from $\abs{1-\hat\gamma_{3}\dt}<\exp(-\hat \gamma_3\dt)$, 
	we have that $\E\sbrac{\abs{\hxit{n}}^p}\leq\abs{x}^p\exp(-{\frac{\hat \gamma_3}{2}}p n\dt) +  C_p.$

	For the multi-dimensional case, we apply Condition (3') to have the estimations with coefficients $C_{i,p}$ and $\gamma_{i}$ for each $i=1,2,\ldots,d$. Then the final conclusion follows.
\end{proof}

\begin{proposition}\label{propostion of numerical scheme is ergodic}
	Assume the conditions in Proposition \ref{proposition of bound of n-th moments of discrete process for weak approximation} and Condition (2), then the Euler-Maruyama scheme (\ref{equation of scheme of discrete random periodic solution by steps for weak approximation}) is geometrically ergodic for all step-size $0<\dt<\frac{2}{L}$, i.e. there exists a periodic measure $\hat\rho^\dt:\Z\to\mathcal{P}(\R^d)$ such that
	$$\norm{\hat P(i\dt,-k\tau+i\dt,x,\cdot)-\hat\rho_i(\cdot)}_{TV}\leq Ce^{-\delta k\tau},\quad k\in\N,$$
	for some constants $C,\delta>0.$ 
\end{proposition}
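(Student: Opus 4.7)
The plan is to reduce to a time-homogeneous chain by the standard periodic-to-homogeneous trick and then invoke Harris' theorem (Meyn--Tweedie) via a Foster--Lyapunov drift plus a minorization on sublevel sets.

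First I would consider the skeleton chain $Y_k := \hx_{-k\tau}$, $k\in\N$, obtained by sampling the Euler scheme at multiples of the period. Since $b(\cdot,x)$ is $\tau$-periodic and $N\dt = \tau$, this skeleton is a time-homogeneous Markov chain on $\R^d$ whose one-step transition kernel is $Q(x,\cdot) := \hat P(0,-\tau,x,\cdot)$ obtained by composing $N$ successive Euler steps. The periodic measure $\hat\rho_i$ is then to be constructed as $\hat\rho_i = \hat{\mathcal T}(i\dt,0)\hat\mu$ where $\hat\mu$ is the unique invariant measure of $Q$; periodicity $\hat\rho_{i+N}=\hat\rho_i$ will follow from $Q\hat\mu=\hat\mu$, and the TV-convergence of $\hat P(i\dt,-k\tau+i\dt,x,\cdot)$ to $\hat\rho_i$ will follow from the corresponding TV-convergence of $Q^k(x,\cdot)$ to $\hat\mu$ together with the contractivity of transition kernels.

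Next I would verify the two hypotheses of Harris' theorem for $Q$. The drift condition: taking $V(x)=1+|x|^2$, I iterate the one-step Lyapunov bound (\ref{inequality of lyapunov condition}) from Proposition \ref{proposition of bound of n-th moments of discrete process for weak approximation} with $p=2$ over $N$ steps to obtain
\begin{align*}
\E[V(Y_{k+1})\mid Y_k=x] \le (1-\hat\gamma\dt)^N V(x) + \frac{\hat C_2}{\hat\gamma\dt}\bigl(1-(1-\hat\gamma\dt)^N\bigr) \le \kappa V(x) + K,
\end{align*}
with $\kappa<1$ uniformly bounded away from $1$ (since $(1-\hat\gamma\dt)^N\le e^{-\hat\gamma\tau}$) and $K<\infty$ uniform in $\dt$. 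The minorization: from Condition (2) and Condition (1), each Euler step has a Gaussian transition density
\begin{align*}
\hat p_1(t,x,y) = \frac{1}{(2\pi\dt)^{d/2}\sqrt{\det a(x)}}\exp\!\Bigl(-\tfrac{1}{2\dt}\langle a(x)^{-1}(y-x-b(t,x)\dt),\,y-x-b(t,x)\dt\rangle\Bigr),
\end{align*}
which is bounded below by a strictly positive constant on any compact $x,y$-set because $a=\sigma\sigma^T$ is uniformly elliptic and $b$ is bounded on compacta. Convolving $N$ such kernels yields a density for $Q(x,\cdot)$ which is also strictly positive and continuous, so every sublevel set $\{V\le R\}$ is a small set: there exist $\varepsilon>0$ and a probability measure $\nu$ with $Q(x,\cdot)\ge\varepsilon\,\nu(\cdot)$ for all $x$ in that sublevel set.

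With drift plus minorization in hand, Harris' theorem (or its $V$-uniform ergodic form in Meyn--Tweedie) yields a unique invariant probability measure $\hat\mu$ for $Q$ and a geometric rate
\begin{align*}
\|Q^k(x,\cdot)-\hat\mu\|_{TV} \le C(1+|x|^2)e^{-\delta k\tau}
\end{align*}
for some $C,\delta>0$. Setting $\hat\rho_i := \hat{\mathcal T}(i\dt,0)\hat\mu$ for $i=0,\dots,N-1$ and extending periodically gives the claimed periodic measure, and the advertised TV-bound follows by writing $\hat P(i\dt,-k\tau+i\dt,x,\cdot) = \hat{\mathcal T}(i\dt,0)Q^k(x,\cdot)$ and using that Markov kernels are TV-contractions. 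The main technical obstacle I anticipate is verifying the minorization uniformly in a $\dt$-dependent way: one must show that the $N$-fold Gaussian convolution remains uniformly positive on a fixed compact, which requires a careful estimate using boundedness of $b$ on compacta together with the uniform lower bound on $\det a(x)$ and an explicit geometric control (e.g. a chaining argument through $N$ balls of radius $O(\sqrt{\dt})$) — this is standard but must be done with constants independent of the periodic time variable $i$ to guarantee the rate $\delta$ depends only on the drift/ellipticity constants and not on the specific starting phase.
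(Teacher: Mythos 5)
Your proposal is correct and is essentially the paper's own argument: a Foster--Lyapunov drift condition supplied by Proposition \ref{proposition of bound of n-th moments of discrete process for weak approximation} plus a minorization (local Doeblin) condition for the one-period kernel, obtained from the uniformly elliptic Gaussian single-step kernels, combined through a Harris/Meyn--Tweedie-type theorem --- the paper simply delegates the periodic-kernel version of this machinery to Theorems 3.3 and 3.5 of \cite{Feng-Zhao-Zhong2019measure}, whereas you pass to the period-$\tau$ skeleton chain and apply classical Harris directly. One slip to repair: $\hat P(i\dt,-k\tau+i\dt,x,\cdot)$ is the $k$-th iterate of the period map started at phase $i\dt$, not $\hat{\mathcal{T}}(i\dt,0)Q^k(x,\cdot)$ (which starts at phase $0$); since your drift and minorization estimates are uniform in the starting phase, as you yourself note, the fix is to run the same Harris argument for each phase-$i$ period map $Q_i:=\hat P((i+N)\dt,i\dt,\cdot,\cdot)$, take $\hat\rho_i$ to be its unique invariant measure (consistency under $\hat{\mathcal{T}}^*$ and $N$-periodicity then follow from uniqueness), which gives the stated TV decay. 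A minor overstatement: the constant in your iterated drift bound is of order $\hat C_2/(\hat\gamma\dt)$ and so is not uniform in $\dt$, but no such uniformity is required for this proposition.
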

\begin{proof}
	To check the local Doeblin condition (\cite{Meyn-Tweedie1993},
\cite{Nummelin1984}), we need to prove the transition kernel
	$\hat{P}(s+\tau,s,x,A)=\hat{P}(s+N\dt,s,x,A)=\hat{\mathbb{P}}\{\hx_{s+N\dt}^s\in A\vert \hx_s^s=x\}$
	possesses a density function $\hat{p}(t,s,x,y)$ satisfying
	$\inf_{x,y\in K}\hat{p}(s+\tau,s,x,y)>0$
	for some non-empty set $K\in\mathcal{B}$ with Lebesgue measure $\Lambda(K)>0.$
	To apply the result of Theorem 3.5 in \cite{Feng-Zhao-Zhong2019measure} to prove the local Doeblin condition of the transition kernel,
	we only need to prove there is a non-empty compact set $K$, such that for any $i=0,\ldots,N-1$ and any non-empty open set $\Gamma$, we have
	\begin{align}\label{local doeblin for one step}
		\inf_{x\in K}\hat{P}(s+(i+1)\dt,s+i\dt,x,\Gamma)>0.
	\end{align}

	Consider the numerical approximation in the time interval $[s+i\dt,s+(i+1)\dt]$. For simplicity, we denote by $s_i=s+i\dt,$ $i=0,\ldots,N-1$ and $\hx_t^i=\hx_{s_i+t}^{s,x}$. So
	\begin{align}\label{equation 4.8}
		\hx_\dt^i-\hx_0^i=b(s_i,\hx_0^i)\dt+\sigma(\hx_0^i)\dW_t,
	\end{align}
	where $\dW_t=W_{\dt}-W_{0}$. Let $\hx_{\dt}^{i,x}$ be defined by (\ref{equation 4.8}) conditioned on $\hx_0^i=x$. Then the law of $\hx_{\dt}^{i,x}$ is $\hat{P}(s+(i+1)\dt,s+i\dt,x,\cdot)=\hat{P}(s_i+\dt,s_i,x,\cdot)$. Note $b(s_i,x)$ and $\sigma(x)$ are non-random and given, thus $\hat{P}(s_i+\dt,s_i,x,\cdot)$ is simply the Gaussian distribution with mean $x+b(s_i,x)\dt\in\R^d$ and covariance matrix $\sigma\sigma^T(x)\dt\in\R^{d\times d}$. The covariance matrix is uniformly non-degenerate, thus for any non-empty open set $\Gamma\in\R^d,$ we have 
	$\hat{P}(s_i+\dt,s_i,x,\Gamma)>0$
	and the function $x\mapsto \hat{P}(s_i+\dt,s_i,x,\Gamma)$ is continuous. Thus for any compact set $K\subset\R^d,$ we have (\ref{local doeblin for one step}).

	By Theorem 3.5 in \cite{Feng-Zhao-Zhong2019measure}, we obtain the local Doeblin condition of 
	$\hat{P}(s+N\dt,s,x,\cdot)$. Note condition $0<\dt<\frac{2}{L}$ implies there exists $\delta>0$ such that $0<\dt<{\frac{2}{L+\delta}}$. So 
	Proposition \ref{proposition of bound of n-th moments of discrete process for weak approximation}  holds and estimate (\ref{inequality of lyapunov condition}) implies Lyapunov condition with Lyapunov function $V(x)=x^2$. Then by Theorem 3.3 in \cite{Feng-Zhao-Zhong2019measure}, we deduce the ergodicity of the numerical scheme and the convergence to the periodic measure $\hat\rho^{\dt}.$
\end{proof}
Similar in the continuous time case, we can lift the discrete semi-flow and periodic measure to $\left\{0,1,2,\ldots,N\right\}\times\R^d$ as follows
$$\widetilde{\hx}_{i\dt}^{j\dt}=(i,{\hx}_{i\dt}^{j\dt}),\quad i\geq j,$$
$$\tilde{\hat P}(i,(j,x),\{k\}\times\Gamma)=\delta_{(i+j\ mod\ N)}(k)\hat P((i+j)\dt,j\dt,x,\Gamma),\quad i\geq j,$$
$$\tilde{\hat \rho}_i^\dt(\{k\}\times\Gamma)=\delta_{(i\ mod\ N)}(k)\hat\rho_i^\dt(\Gamma),\quad i\in\Z.$$
Then $\widetilde{\hx}$ is a cocycle and $\tilde{\hat P}$ is the transition probability of $\widetilde{\hx}$. Moreover, $\tilde{\hat \rho}^\dt$ is the periodic measure of $\tilde{\hat P}$, i.e. 
$$\sum_{l=0}^{N-1}\int_{{\R^d}}\tilde{\hat P}(i,(l,x),\{k\}\times\Gamma)\tilde{\hat \rho}_j^\dt(\{l\}\times dx)=\tilde{\hat \rho}_{i+j}^\dt(\{k\}\times\Gamma).$$
Define $\bar{\tilde{\hat \rho}}^\dt=\frac{1}{N}\sum_{j=0}^{N-1}\tilde{\hat \rho}_j^\dt.$ Then it is easy to see that 
$$\sum_{l=0}^{N-1}\int_{{\R^d}}\tilde{\hat P}(i,(l,x),\{k\}\times\Gamma)\bar{\tilde{\hat \rho}}^\dt(\{l\}\times dx)=\bar{\tilde{\hat \rho}}^\dt(\{k\}\times\Gamma),$$
i.e. $\bar{\tilde{\hat \rho}}$ is the invariant measure of $\tilde{\hat P}(i)$, $i\in\N.$ Moreover, for any measurable function $\phi:\R^d\to\R,$
\begin{align}\label{equivalence of measurable function under lifted and original discrete apm}
	\sum_{k=0}^{N-1}\int_{\R^d}\phi(x)\bar{\tilde{\hat \rho}}^\dt(\{k\}\times dx) =& \sum_{k=0}^{N-1}\int_{\R^d}\phi(x)\frac{1}{N}\sum_{j=0}^{N-1}\delta_{j}(k){\hat \rho}_j^\dt(dx)\\
	=&\frac{1}{N}\sum_{k=0}^{N-1}\int_{\R^d}\phi(x){\hat \rho}_k^\dt(dx)\notag\\
	=&\sum_{k=0}^{N-1}\int_{\R^d}\phi(x)\bar{\hat \rho}^\dt(dx),\notag
\end{align}
where $\bar{\hat \rho}^\dt=\frac{1}{N}\sum_{j=0}^{N-1}{\hat \rho}_j^\dt.$

\section{Error estimate for the approximation to periodic measures}
In autonomous systems, there are some established results in the ergodicity of numerical schemes (Mattingly, Stuart and Higham \cite{mattingly-stuart-higham}; Grorud and Talay \cite{Grorud-Talay1996}; Talay \cite{Talay1990}, \cite{Talay1991}). But in our non-autonomous model, due to the lacking of weakly mixing property, those approaches may not give immediately the error between $\int_{\R^d}{\phi}({x})\bar{\rho}(d{x})$ and $\int_{\R^d}{\phi}({x})\bar{\hat \rho}^\dt(d{x})$ over $[0,\tau]$. We develop the following approach using integration with respect to initial time $s$ to obtain the error estimate of invariant measure. To approximate the average of periodic measure, we need to consider the long time behaviour of the SDE (\ref{equations of SDE}) by pullback of the initial time to $s-k\tau$. First we notice from the ergodic theory and ergodicity of $\{X_t^{s,x}\}_{t\geq s}$ and $\{\hx_{s+i\dt}^{s,x}\}_{i\geq 0}$, we have the following law of large numbers. Recall (\ref{equivalence of measurable function under lifted and original apm}) and (\ref{equivalence of measurable function under lifted and original discrete apm}), so for $\phi$ with at most polynomial growth at infinity and as both of $X_t^{s,x}$ and $\hx_{s+i\dt}^{s,x}$ possess finite moments of any order, we have that for any $\varepsilon>0,$ there exists a constant $N'$ such that for all $n\geq N'$,
\begin{eqnarray}\label{law of large numbers for exact solution}
	\abs{\frac{1}{n}\sum_{k=1}^{n}\frac{1}{\tau}\int_0^{\tau}\E\phi(X_{s+k\tau}^{s,x})ds - \int_{\R^d}\phi(x)\bar{\rho}(dx)}&\leq \varepsilon,
	\quad a.s.,\\
	\label{law of large numbers for numerical approximation}
	\abs{\frac{1}{n}\sum_{k=1}^{n}\frac{1}{\tau}\int_0^{\tau}\E\phi(\hx_{s+kN\dt}^{s,x})ds - \int_{\R^d}\phi(x)\bar{\hat \rho}^\dt(dx)}&\leq \varepsilon,   \quad a.s..
\end{eqnarray}
\begin{theorem}\label{theorem of main result}
	Assume conditions in Proposition \ref{propostion of numerical scheme is ergodic}. Then for any step size $\dt=\tau/N$, $N\in\N$, satisfying $\dt<\frac{2}{L}$ and any function $\phi\in\mathcal{C}_p^{\infty},$ we have:
	\begin{align}\label{error between average of periodic measure and its numerical approximation}
		\abs{\int_{\R^d}{\phi}({x})\bar{\rho}(d{x})-\int_{\R^d}{\phi}({x})\bar{\hat \rho}^\dt(d{x})}= \mathcal{O}(\dt).
	\end{align}
\end{theorem}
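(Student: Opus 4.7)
My plan is to proceed by a Talay--Tubaro style telescoping argument, adapted to the non-autonomous periodic setting via two specific ingredients. First, the laws of large numbers (5.1)--(5.2) reduce proving (5.3) to bounding, uniformly in $n$ and $x$, the expression $\frac{1}{n}\sum_{k=1}^n\frac{1}{\tau}\int_0^\tau \E[\phi(X_{s+k\tau}^{s,x})-\phi(\hx_{s+k\tau}^{s,x})]\,ds$ by $C\dt$. The core task therefore becomes a one-sided weak-error estimate over the ever-growing horizon $k\tau$ that remains uniform in $k$.

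Fix $k\in\N$, $s\in[0,\tau)$ and set $T=s+k\tau$. Using $\mathcal{U}(T,r,y)=\E\phi(X_T^{r,y})$ with terminal condition $\mathcal{U}(T,T,y)=\phi(y)$, I telescope
\begin{align*}
\E\phi(X_T^{s,x})-\E\phi(\hx_T^{s,x})=\sum_{i=0}^{kN-1}\E\bigl[\mathcal{U}(T,s+i\dt,\hx_{s+i\dt}^{s,x})-\mathcal{U}(T,s+(i+1)\dt,\hx_{s+(i+1)\dt}^{s,x})\bigr].
\end{align*}
Because $\mathcal{U}(T,\cdot,\cdot)$ satisfies the backward Kolmogorov equation implicit in (2.4), an It\^o--Taylor expansion of each summand (substituting the EM increment $\hx_{s+(i+1)\dt}^{s,x}=\hx_{s+i\dt}^{s,x}+b(s+i\dt,\hx_{s+i\dt}^{s,x})\dt+\sigma(\hx_{s+i\dt}^{s,x})\dW_i$ and taking conditional expectation) cancels the leading orders, leaving a standard local weak error $\dt^2 R_i$ with
\begin{align*}
|R_i|\le C\sum_{|J|\le 4}|\partial_J\mathcal{U}(T,\xi_i,\eta_i)|\,(1+|\hx_{s+i\dt}^{s,x}|^{\rho})
\end{align*}
at some intermediate $(\xi_i,\eta_i)$. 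Kunita's estimate (2.21), Condition (1) and the uniform moment bounds of Proposition 4.1 make this expansion rigorous.

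The crux is to convert these derivative bounds into a summable estimate in $i$. Averaging over $s$ and substituting $s'=s+i\dt$, the $\tau$-periodicity of the drift yields $\mathcal{U}(T,s+i\dt,y)=\mathcal{U}(s'+(k\tau-i\dt),s',y)$, so Theorem 3.3 gives
\begin{align*}
\frac{1}{\tau}\int_0^\tau|\partial_J\mathcal{U}(s+k\tau,s+i\dt,y)|\,ds\le \Gamma_J(1+|y|^{k_J})\exp(-\gamma_J(k\tau-i\dt)).
\end{align*}
Coupled with Proposition 4.1 applied to $\hx_{s+i\dt}^{s,x}$, this gives $\frac{1}{\tau}\int_0^\tau\E|\partial_J\mathcal{U}(T,s+i\dt,\hx_{s+i\dt}^{s,x})|\,ds\le C(1+|x|^{\kappa})e^{-\gamma(k\tau-i\dt)}$ for a common $\gamma>0$. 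Summing the resulting geometric series in $i$,
\begin{align*}
\frac{1}{\tau}\int_0^\tau|\E\phi(X_T^{s,x})-\E\phi(\hx_T^{s,x})|\,ds\le C\dt^2\sum_{i=0}^{kN-1}e^{-\gamma(k\tau-i\dt)}\le \frac{C\dt^2}{1-e^{-\gamma\dt}}\le C'\dt,
\end{align*}
uniformly in $k$. Averaging over $k$ and passing $n\to\infty$ via (5.1)--(5.2) then gives (5.3).

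The main obstacle I anticipate is that Theorem 3.3 provides only an averaged-in-$s$ exponential decay of the spatial derivatives of $\mathcal{U}$, not a pointwise one; a pointwise bound would grow with the horizon and spoil the geometric summation. I must therefore arrange the telescoping so the $s$-integration can be commuted inside each local one-step error, which is why the periodicity of the coefficients and the substitution $s\mapsto s+i\dt$ are essential. A secondary difficulty is to verify that the moment constants in Proposition 4.1, when evaluated at $\hx_{s+i\dt}^{s,x}$, stay uniform in $i$, $s$ and $\dt\in(0,2/L)$, so that the polynomial factors $(1+|x|^{\kappa})$ do not destroy the geometric summation.
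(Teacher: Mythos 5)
Your proposal is correct and follows essentially the same route as the paper's proof: telescoping $\mathcal{U}(s+k\tau,\cdot,\cdot)$ along the Euler--Maruyama trajectory, cancelling the first-order terms via the Kolmogorov relation $\frac{\partial}{\partial s}+\cL(s)=\frac{\partial}{\partial t}$ (the paper does this through the two It\^o--Taylor expansions (\ref{equality of weak appro u_i})--(\ref{equality of weak appro u_i+1})), bounding the $\dt^2$ remainders by the $s$-averaged exponential decay of Theorem \ref{theorem of exponential contraction of spacial derivatives for u} together with the uniform moment bounds of Proposition \ref{proposition of bound of n-th moments of discrete process for weak approximation}, summing the geometric series, and concluding via the laws of large numbers (\ref{law of large numbers for exact solution})--(\ref{law of large numbers for numerical approximation}). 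The obstacle you flag (only averaged-in-$s$ decay being available, requiring the $s$-integration to be commuted inside each local error using periodicity) is exactly how the paper handles it.
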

\begin{proof}
	Define $\mathcal{U}(s+i\dt,s,x)=\E\phi(X_{s+i\dt}^{s,x})$. Then
	\begin{align}\label{equation of exact solution and numerical approximation}
		\mathcal{U}(s,s-k\tau,x)=\E\phi(X_{s}^{s-k\tau,x}),\quad \mathcal{U}(s,s,\hx_{s}^{s-k\tau,x}) = \phi(\hx_{s}^{s-k\tau,x}),\ a.s..
	\end{align}
	By the periodicity of $\mathcal{U}(t,s,x)$ with respect to initial time $s$, i.e. $\mathcal{U}(t,s-k\tau,x) = \mathcal{U}(t+k\tau,s,x)$, it is always possible to move the initial time into $[0,\tau)$.
	Now we consider the following It\^o-Taylor expansion:
	\begin{align}\label{equality of weak appro u_i}
		\E\mathcal{U}(s+k\tau,s+(i+1)\dt,\hx_{(i+1)\dt}^{s,x}) 
		=&   \E\mathcal{U}(s+k\tau,s+(i+1)\dt,\hx_{i\dt}^{s,x}) \notag \\
		&\hspace*{-4cm}+  \tilde{\cL}(s+(i+1)\dt)\E \mathcal{U}(s+k\tau,s+(i+1)\dt,\hx_{i\dt}^{s,x}) \dt +R_{1,i}^s(\dt)^2.
	\end{align}
	Denote $s'=s+(i+1)\dt$ and $t'=k\tau-(i+1)\dt$. Then it is obvious that
	\begin{align*}
		\E \mathcal{U}(s+k\tau,s+i\dt,x)=\E \mathcal{U}((s'-\dt)+(t'+\dt),s'-\dt,x).
	\end{align*}
	Therefore, we have the following It\^o-Taylor expansion:
	\begin{align}\label{equality of weak appro u_i+1}
		\E\mathcal{U}(s+k\tau,s+i\dt,\hx_{i\dt}^{s,x}) =& \E \mathcal{U}(s+k\tau,s+(i+1)\dt,\hx_{i\dt}^{s,x})   \notag \\
		&\hspace*{-2.5cm}+   \rbrac{\frac{\partial}{\partial t} - \frac{\partial}{\partial s}}\E \mathcal{U}(s+t,s,\hx_{i\dt}^{s,x})\dt\bigg\vert_{s=s',t=t'}    +R_{2,i}^s(\dt)^2.
	\end{align}
	The coefficients $R_{1,i}^s$ and $R_{2,i}^s$ have the following form:
	\begin{align}\label{ito-taylor expansions of u}
		\E\sbrac{ \psi(\hx_{i\dt}^{s,x})\cdot \partial_J \mathcal{U}\rbrac{s+k\tau,s+i\dt,\hx_{i\dt}^{s,x}+  \vartheta\rbrac{\hx_{(i+1)\dt}^{s,x}-\hx_{i\dt}^{s,x}}} },
	\end{align}
	where $0<\vartheta<1$ and the function $\psi(x)$ is a product of functions $b$, $\sigma$ and their derivatives. 
	One can obtain the boundedness of $\psi(x)$ from Condition (1). 
	Combining (\ref{equality of weak appro u_i}) and (\ref{equality of weak appro u_i+1}), we have
	\begin{align}
		&\E\mathcal{U}(s+k\tau,s+(i+1)\dt,\hx_{(i+1)\dt}^{s,x})-\E\mathcal{U}(s+k\tau,s+i\dt,\hx_{i\dt}^{s,x})\notag\\
		=&\rbrac{ \frac{\partial}{\partial s} - \frac{\partial}{\partial t}+ {\cL}(s)}\E \mathcal{U}(s+t,s,x)\dt\bigg\vert_{s=s',t=t',x=\hx_{i+1}^{s,x}} +(R_{2,i}^s-R_{1,i}^s)(\dt)^2.
	\end{align}
	As $\frac{\partial}{\partial s}+ {\cL}(s)=\frac{\partial}{\partial t},$ we take summation on both sides of the above and from (\ref{equation of exact solution and numerical approximation}), periodicity of $\mathcal{U}$,
	{\small \begin{align*}
			&\E\phi(\hx_{s}^{s-k\tau,x}) - \E\phi(X_{s}^{s-k\tau,x})\\
			=&\sum_{i=0}^{kN-1}\rbrac{\E\mathcal{U}(s+k\tau,s+(i+1)\dt,\hx_{(i+1)\dt}^{s,x})-\E\mathcal{U}(s+k\tau,s+i\dt,\hx_{i\dt}^{s,x})}=\sum_{i=0}^{kN-1}R_{i}^s(\dt)^2,
	\end{align*}}\\
	where $R_i^s = R_{1,i}^s - R_{2,i}^s.$
	Combining this with Proposition \ref{proposition of bound of n-th moments of discrete process for weak approximation} and Theorem \ref{theorem of exponential contraction of spacial derivatives for u}, there exists a constant $\lambda>0$ and an integer $l\in \N$, such that 
	\begin{align*}
		&\abs{\sum_{i=0}^{kN-1}\frac{1}{\tau}\int_0^\tau R_{i}^s ds}\\
		\leq& \sum_{i=0}^{kN-1} C\E \sbrac{  \frac{1}{\tau}\int_0^{\tau} \abs{\partial_J \mathcal{U}\rbrac{s+k\tau,s+i\dt,\hx_{i\dt}^{s,x}+  \vartheta\rbrac{\hx_{(i+1)\dt}^{s,x}-\hx_{i\dt}^{s,x}}}}ds }\\
		\leq&   \frac{C}{\tau}\sup_{i\geq 0} \E\rbrac{1  +  \abs{\hx_{-k\tau+i\dt}^{-k\tau,x}}^l   + \abs{\hx_{-k\tau+(i+1)\dt}^{-k\tau,x}}^l }\sum_{i=0}^{kN-1}\exp\rbrac{-\lambda(kN-i)\dt}  \\
		\leq& \frac{1-e^{-\lambda k N\dt}}{1-e^{-\lambda\dt}}e^{-\lambda\dt}C_l\rbrac{1+\abs{x}^l } .
	\end{align*}
	Let $k$ go to infinity and $\dt$ be small enough, we have
	\begin{align}\label{inequality of cumulation of error}
		\sum_{i=0}^{+\infty}\frac{1}{\tau}\int_0^\tau \abs{R_i^s}(\dt)^2ds\leq  \tilde{C}(1+\abs{x}^l)(\dt).
	\end{align}
	This is then followed by
	\begin{align*}
		&\lim_{n\to\infty}\frac{1}{n}\sum_{k=1}^{n}\abs{\frac{1}{\tau}\int_0^\tau\E\phi(\hx_{s}^{s-k\tau}(x))ds-\frac{1}{\tau}\int_0^\tau\mathcal{U}(s,s-k\tau,x)ds}\leq \tilde{C}(1+\abs{x}^l)(\dt).
	\end{align*}
	It then follows from (\ref{law of large numbers for exact solution}) and a triangle inequality argument that 
	\begin{align}\label{5.9a}
		\abs{\lim_{n\to\infty}\frac{1}{n}\sum_{k=1}^{n}\frac{1}{\tau}\int_0^\tau\phi\rbrac{\hx_{s+kN\dt}^{s,x}}ds-\int_\mathbb{S}\tilde{\phi}(\tilde{x})d\apm(\tilde{x})}=\mathcal{O}(\dt),   \quad a.s.,
	\end{align}
	Recall (\ref{law of large numbers for exact solution}) and (\ref{law of large numbers for numerical approximation}).
	Then by a triangle inequality argument, we obtain
	\begin{align}\label{20204g}
		\abs{\int_{\R^d}{\phi}({x})\bar{\rho}(d{x})-\int_{\R^d}{\phi}({x})\bar{\hat \rho}^\dt(d{x})}\leq 2\varepsilon+ \tilde{C}(1+\abs{x}^l)\dt.
	\end{align}
	Thus (\ref{error between average of periodic measure and its numerical approximation}) follows
	as the left hand side of 
	(\ref{20204g}) is independent $x$ and $\varepsilon$. \end{proof}
\begin{remark}
	Consider the modification of Benzi-Parisi-Sutera-Vulpiani's stochastic resonance model mentioned in the introduction, the coefficients of Linear growth is estimated as $L\leq \frac{\partial b(t,x)}{\partial_x}\big\vert_{x=4}\leq 48$. Hence the step size $\dt< \frac{1}{24}$ will satisfy our theorem.
\end{remark}

\section{Numerical examples}
In this section, we carry out some numerical experiments to support the theoretical results obtained in the last section. We give the error analysis for the  numerical scheme  of the average of periodic measures of two specific models arising in modelling daily temperature and climate dynamics respectively. For each example, we firstly generate discrete random periodic paths $\hx_{s+k\dt}^{s,x}$, $k=0,\ldots,N-1$ and test the convergence with different initial values. The numerical error we calculate in this section is 
\begin{align}\label{numerical error}
	\abs{\frac{1}{N}\sum_{k=0}^{N-1}\phi\rbrac{\hx_{s+k\dt}^{s,x}}-\int_\mathbb{S}{\phi}({x})d\bar{\rho}({x})},
\end{align}
which consists of three parts of errors: those influenced by the finiteness of $N$, the discretization error of time integral on $s$ in (\ref{5.9a}) and the error given in (\ref{5.9a}). Here $\phi\in C_p^{\infty}$. The main task is to estimate the error in (\ref{numerical error}) in terms of rate with respect to $\dt$. We choose large enough $N$ to reduce its impact on the error. We also compare numerically the errors with different initial time $s$ and find that the convergence of solutions to the random periodic paths in both models are very fast, as seen in Figure \ref{figure2} and Figure \ref{figure5}, so the effect of the initial time and position to the overall error is negligible as we take the average over a large number of iterations.

To carry out numerical experiments, we use Python 3.8.6 on Linux Fedora 32 with 3.40 GHz Intel(R) Core(TM) i7-3770 CPU and RAM 32.00 GB. There are two cores having higher computing speed (2958.762 MHz and 2121.630 MHz) compared with others (1600 MHz). We would not feel surprise to notice some abnormal computing times.
\begin{example}\label{example1}
	To present the error of our approximation scheme, we study the following temperature model considered by F. Benth and J. Benth \cite{Benth-Benth},
	\begin{align*}
		dX_t=(a_0+a_1 \cos(2\pi (t-a_2)/365) - \pi X_t)dt+\sigma dW_t,
	\end{align*}
	with $a_0 = 6.4$, $a_1=10.4$, $a_2=-166$ and $\sigma=0.3$. From the discussion in \cite{Feng-Zhao-Zhong2019measure}, it is known that the periodic measure of this model exists and is a Gaussian distribution with mean $$\frac{a_0}{\pi}+a_1 \frac{k \sin(k(t-a_2))+\pi \cos(k(t-a_2))}{k^2+\pi^2},$$
and  variance $\frac{\sigma^2}{2\pi}$, 
where $k=\frac{2\pi}{365}$. This is the case where the periodic measure is known explicitly. But a numerical experiment of calculating the numerical error is carried out here in order to verify the accuracy of our scheme. To simplify the calculation, we take the test function $\phi(x)=x^2$ as $\E \sbrac{X^2}=\E\sbrac{X}^2+\text{Var}\sbrac{X}$. Under the average of periodic measure, one can derive the exact value $$\int_{\R^d}x^2\bar{\rho}(d{x}) = \frac{a_0^2}{\pi^2}+\frac{a_1^2}{2(\pi^2+k^2)}+\frac{\sigma^2}{2\pi}.$$
On the other hand, conducting numerical approximation with Euler-Maruyama scheme, we obtain $\hx_{s+k\dt}^{s,x}$ for a range of different $\dt<2/\pi=0.63662$ varying from $0.2$ to $0.004$. We apply the Euler-Maruyama scheme with same length of time of 10000 periods for different step size $\dt$. The error is presented in Table \ref{table1} and as a log-log graph in Figure \ref{figure1}. Our numerical results show very good order 1 line fitting. Note the exact value $\int_{\R^d}x^2\bar{\rho}(d{x})=10.266021$ (rounded off in 6 decimal places). 

In Figure \ref{figure2}, we present two numerical approximations, $\hx_{s}^{0,-10}$ and $\hx_{s}^{0,10}$, to random periodic path with different initial values. These two trajectories are generated with the same realisation of noise. They merge before $t=2$ and show inconspicuous difference after that. We then generate one numerical approximation to random periodic path with 10000 periods and step size $\dt=0.01$. We collect the points on time $t=k\tau$ to build the histogram in left hand side graph of Figure \ref{figure3}(\subref{fig1:rho_0}) compared with its theoretical result $\rho_0$. In Figure \ref{figure3}, we give 8 more comparisons between numerical approximations to the periodic measure and its theoretical results. 
\end{example}
\begin{table}
	\begin{tabular}{ |c||r|r|r|r|r|  }
		\hline
		Step sizes      & $\dt=0.2$& $\dt=0.1$&$\dt=0.08$&$\dt=0.05$&$\dt=0.04$\\
		\hline
		Approximation& 10.560044& 10.383899& 10.357855& 10.318126& 10.307453\\
		Numerical error           &  0.294024&  0.117879&  0.091834&  0.052105&  0.041433\\
		CPU(seconds)    &    175.41&    371.39&    465.71&    756.28&    951.37\\
		\hline
		Step sizes      &$\dt=0.02$&$\dt=0.01$&$\dt=0.008$&$\dt=0.005$&$\dt=0.004$\\
		\hline
		Approximation& 10.283706& 10.277765& 10.275399& 10.271213& 10.270654\\
		Numerical error           &  0.017686&  0.011745&  0.009379&  0.005192&  0.004634\\
		CPU(seconds)    &   1600.47&   3711.75&   4407.75&   6551.42&   7600.27\\
		\hline
	\end{tabular}
	\caption{Numerical results of Example \ref{example1} where approximations and numerical errors are rounded off in 6 decimal places}	\label{table1}
\end{table}

\begin{figure}
	\centering
	\includegraphics[width=0.6\textwidth]{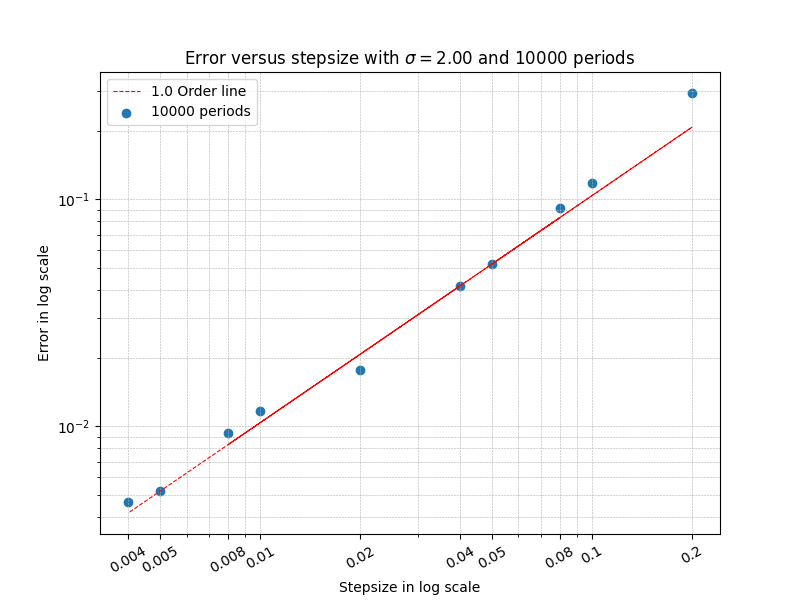}
	\caption{Error of approximation to the average of periodic measure versus step size in log-log graph (Example \ref{example1})}\label{figure1}
\end{figure}
\begin{figure}
	\centering
	\includegraphics[width=0.8\textwidth]{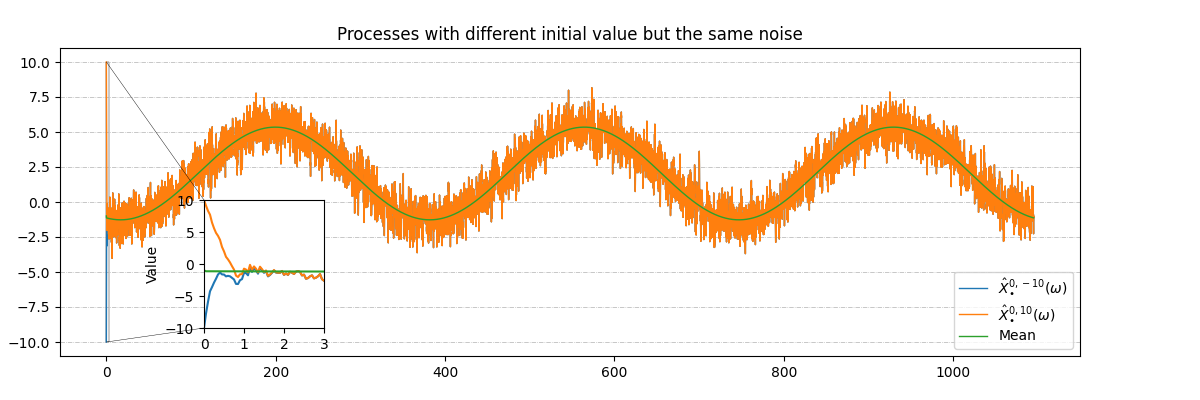}
	\caption{Paths of the temperature model (Example \ref{example1})}\label{figure2}
\end{figure}
\begin{figure}
	\centering
	\begin{subfigure}[b]{0.3\textwidth}
		\centering
		\includegraphics[width=\textwidth]{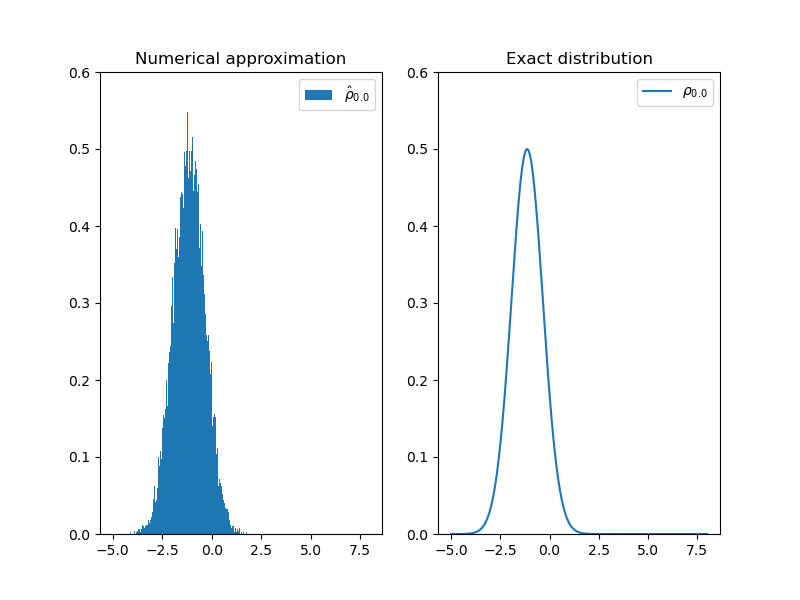}
		\caption{$\hat{\rho}_0$ and $\rho_0$}
		\label{fig1:rho_0}
	\end{subfigure}
	\hfill
	\begin{subfigure}[b]{0.3\textwidth}
		\centering
		\includegraphics[width=\textwidth]{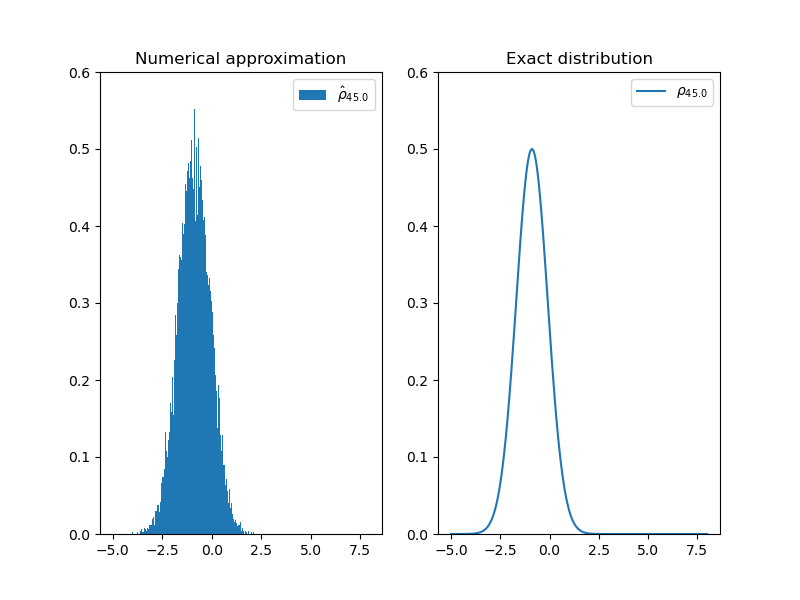}
		\caption{$\hat{\rho}_{45}$ and $\rho_{45}$}
		\label{fig1:rho_45}
	\end{subfigure}
	\hfill
	\begin{subfigure}[b]{0.3\textwidth}
		\centering
		\includegraphics[width=\textwidth]{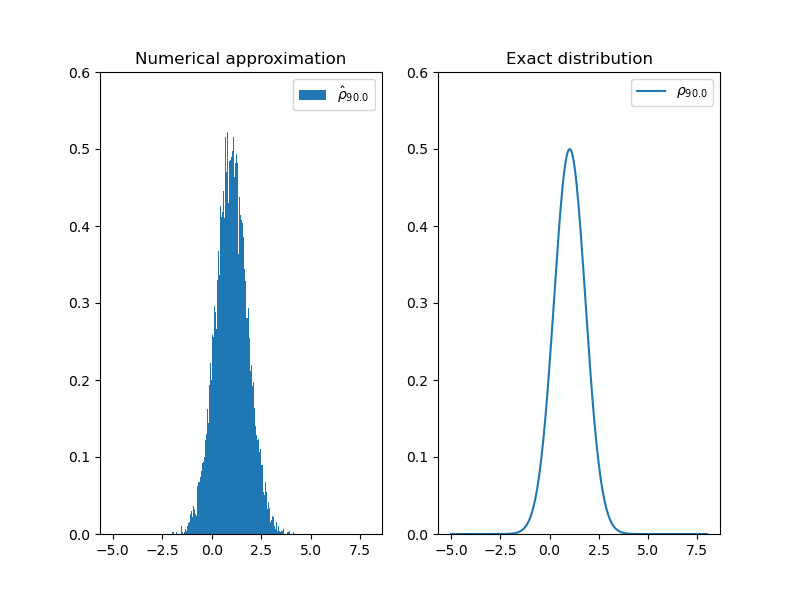}
		\caption{$\hat{\rho}_{90}$ and $\rho_{90}$}
		\label{fig1:rho_90}
	\end{subfigure}
	\centering
	\begin{subfigure}[b]{0.3\textwidth}
		\centering
		\includegraphics[width=\textwidth]{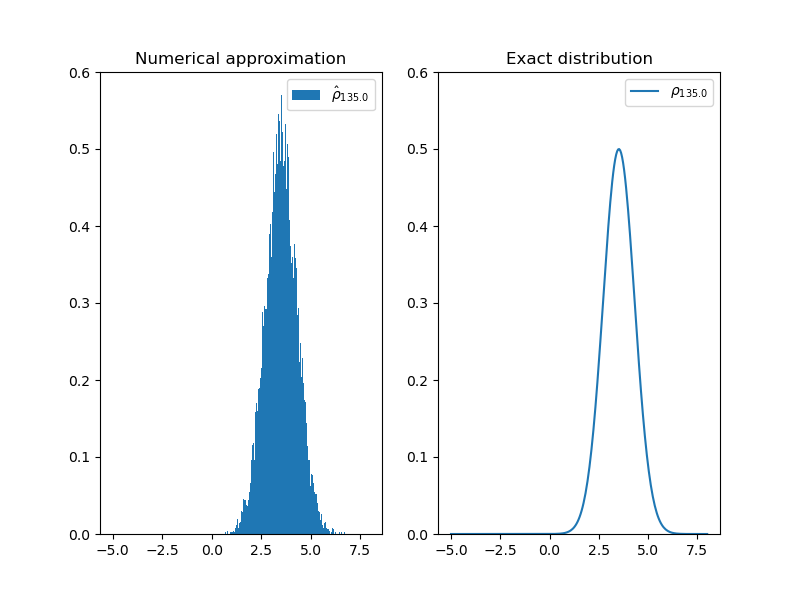}
		\caption{$\hat{\rho}_{135}$ and $\rho_{135}$}
		\label{fig1:rho_135}
	\end{subfigure}
	\hfill
	\begin{subfigure}[b]{0.3\textwidth}
		\centering
		\includegraphics[width=\textwidth]{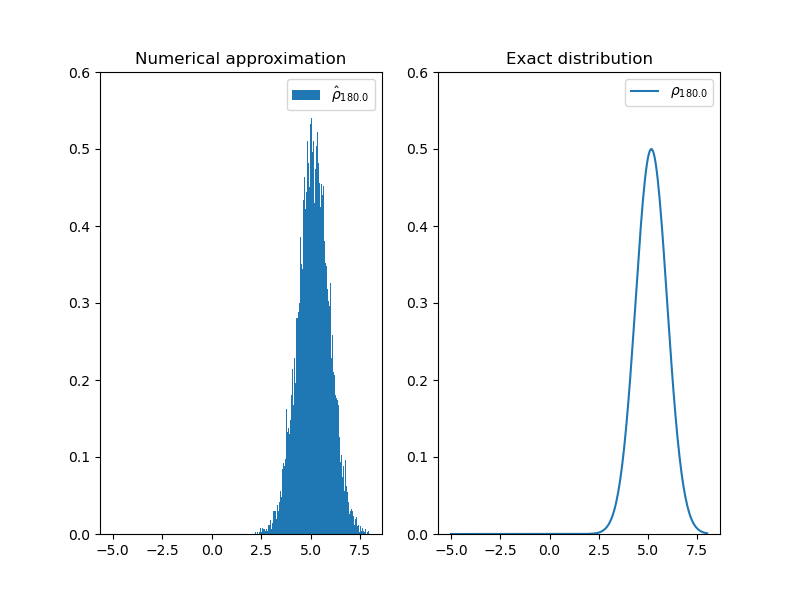}
		\caption{$\hat{\rho}_{180}$ and $\rho_{180}$}
		\label{fig1:rho_180}
	\end{subfigure}
	\hfill
	\begin{subfigure}[b]{0.3\textwidth}
		\centering
		\includegraphics[width=\textwidth]{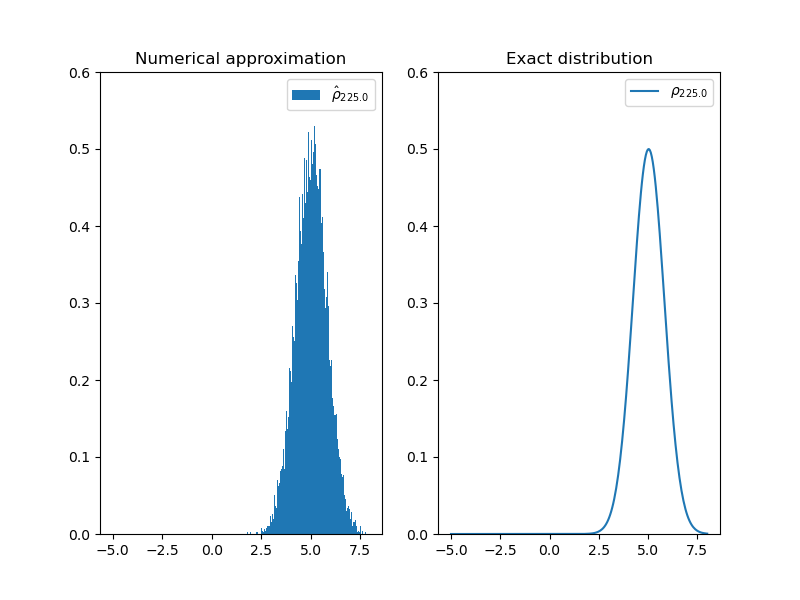}
		\caption{$\hat{\rho}_{225}$ and $\rho_{225}$}
		\label{fig1:rho_225}
	\end{subfigure}
	\centering
	\begin{subfigure}[b]{0.3\textwidth}
		\centering
		\includegraphics[width=\textwidth]{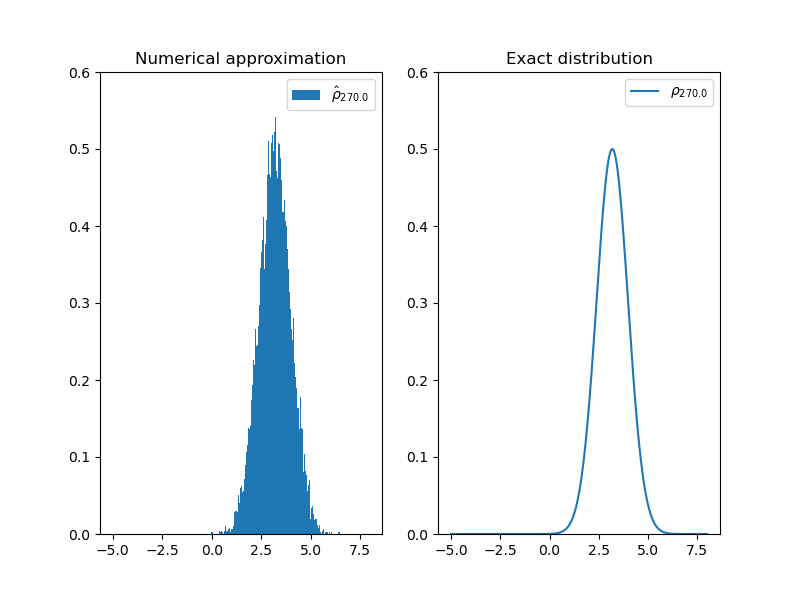}
		\caption{$\hat{\rho}_{270}$ and $\rho_{270}$}
		\label{fig1:rho_270}
	\end{subfigure}
	\hfill
	\begin{subfigure}[b]{0.3\textwidth}
		\centering
		\includegraphics[width=\textwidth]{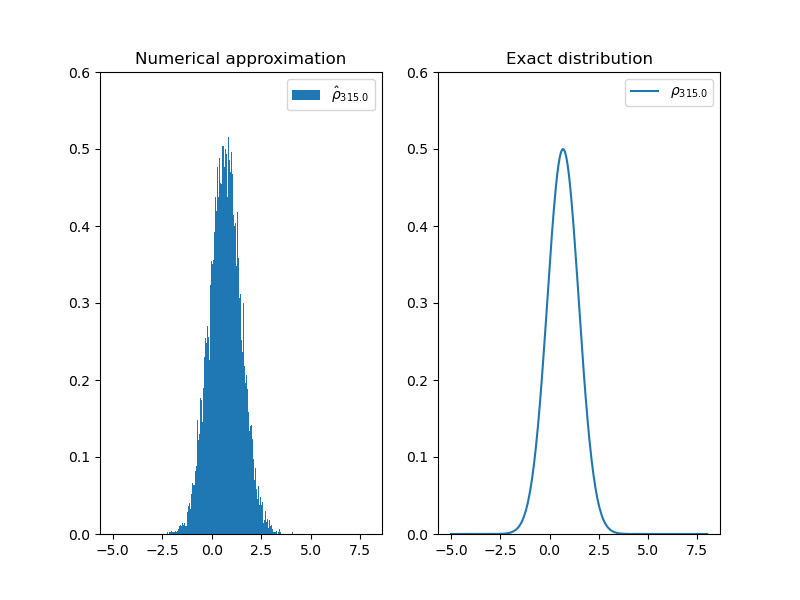}
		\caption{$\hat{\rho}_{315}$ and $\rho_{315}$}
		\label{fig1:rho_315}
	\end{subfigure}
	\hfill
	\begin{subfigure}[b]{0.3\textwidth}
		\centering
		\includegraphics[width=\textwidth]{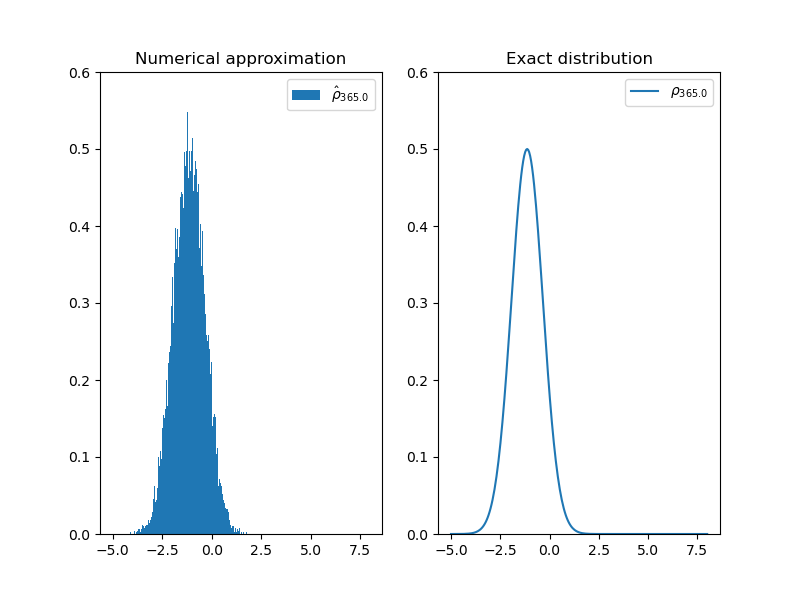}
		\caption{$\hat{\rho}_{365}$ and $\rho_{365}$}
		\label{fig1:rho_365}
	\end{subfigure}
	\caption{Comparisons between approximation of periodic measure with $\dt=0.01$ and 10000 periods and its theoretical result (Example \ref{example1})}
	\label{figure3}
\end{figure}

In practice, we run the computation with 10 different step sizes simultaneously under "multiprocessing" package of Python with 7 cores of CPU. The above results of error analysis took 7600.286 seconds of computing time where the CPU time of each step size is given in Table \ref{table1}. One can see the majority of time was consumed under the small step sizes such as $\dt=0.004$ and $\dt=0.005$. 

If necessary, one can split the approximation of random periodic path with small step sizes into several jobs. This works well due to ergodicity and fast convergence to random periodic path under our scheme. We do not need it in Example \ref{example1} as the computing time is reasonably short. 
But the possibility to split the computation into several independent jobs plays a crucial role in the case when a model has a large period. For such a problem, we need to consider the long time behaviour of $N\tau$ where both $N$ and $\tau$ are large. We will see that in the following example.

\begin{example}\label{example2}
We consider Benzi-Parisi-Sutera-Vulpiani's climate dynamics model given by SDE (\ref{equations of SDE}) with $b(t,x)=x-x^3+A\cos (B t)$ and $\sigma (x)=\sigma$. The coefficients are chosen as $A=0.12$, $B=0.001$ and $\sigma=0.285$ as discussed in \cite{Cherubini-Lamb-Rasmussen-Sato2017} and \cite{Feng-Zhao-Zhong2019resonance}. We make a time scaling by taking $b(t,x)=0.4\pi(x-x^3+0.12\cos(0.0004\pi t))$ and $\sigma(x)=0.285\times\sqrt{0.4\pi}\approx 0.3195$, so the period $\tau=5000$ in the system. Thus when we apply numerical approximation to the model, we can ensure our time step size dividing the period $\tau=5000$. It is also mollified to satisfy the global Lipschitz assumption in this paper. For this, what we could do is to modify the function $x-x^3$ by a linear function $128-47x$ when $x\geq 4$ and $-128-47x$ when $x\leq -4$ and smooth this function by mollifier $\eta_\varepsilon(x)=\eta(x)$, where 
$
	\eta(x)=
	\begin{cases}
		C\exp\left(\frac{1}{\abs{x}^2-1}\right)&\abs{x}\leq 1\\
		0&otherwise
	\end{cases} 
$ and $C$ is chosen such that $\int \eta(x)dx=1$. But this adds a lot of computing time as integration of convolution is needed in every step of the computation.

In our approximation, the drift term is $\hat b(t,x)=0.4\pi((1-\exp(-\frac{50}{x^2}))(x-x^3)+0.12\cos(0.0004\pi t))$. This function makes a very good approximation to function $b(t,x)$ when $\abs{x}\leq 4$ though it is not the case globally. Note this function is Lipschitz and smooth. As we mentioned in the introduction, our modified model provides the same climate dynamics as the original one of Benzi-Parisi-Sutera-Vulpiani. Our numerical simulations presented in Figure \ref{figure5} for the modified equation provide strong evidence that is the case as seen in the Figure \ref{figure5} that the trajectory rarely goes outside of $[-4,4]$. In fact, during the approximation with 10000 periods, we did not find any point in the whole data set running out of this interval.
	
Note this model does not have an explicit solution, so we cannot carry out the error analysis as we did in Example \ref{example1}. To overcome this difficulty we use the approximation of solution with $\dt=0.001$  to replace our exact solution in the error analysis. To make the computation more efficient, we split the approximation involving $\frac{5000\times10000}{0.001} = 5\times 10^{10}$ iterations to 8 individual jobs of $6.25\times 10 ^{9}$ iterations with independent Brownian motions. The results are shown in the left hand side table of Table \ref{table2}. We then conduct the numerical experiment for step size varying from $1/125$ to $1/50$. The error is in the right hand side table of Table \ref{table2} and the log-log graph is presented in Figure \ref{figure4}. We carry out numerical simulation with 10000 periods for each step size and our total cost is 155547.50 seconds with 7 cores. 

We consider numerical simulation with $\dt=0.01$ to show the stochastic resonance phenomenon in Figure \ref{figure5}. The simulations start from different initial condition but the same realisation of noise in each sub-graph, where one can see the convergence to random periodic path is also very fast. Together with ergodicity, it provides the possibility of splitting 10000 periods into several independent approximations for the step size $\dt=0.001$. Without the split, our total computing time would be about $5\times 10^5$ seconds as 6 cores of CPU are idle for long time.
	\begin{table}
		\begin{tabular}{ |c||c|c|  }
			\hline
			& Result& CPU(seconds)\\
			\hline
			1& 1.0120872& 71050.63 \\
			2& 1.0121229& 70247.79 \\
			3& 1.0120366& 70782.51 \\
			4& 1.0121788& 70421.53 \\
			5& 1.0118910& 70679.84 \\
			6& 1.0117187& 69755.99 \\
			7& 1.0115235& 67978.79 \\
			8& 1.0118564& 60069.49 \\
			\hline
			\multicolumn{3}{|l|}{\shortstack{Mean value of $\dt=0.001$\\ is 1.0119269}}\\
			\hline
		\end{tabular}
		\begin{tabular}{ |c||r|r|r|  }
			\hline
			Step sizes      &$\dt=1/50$&$\dt=1/64$&$\dt=1/80$\\
			\hline
			Approximation   & 1.0104580& 1.0106331& 1.0109203\\
			Numerical error & 0.0014689& 0.0012938& 0.0010066\\
			CPU(seconds)    &  28630.88&  36177.61&  45509.97\\
			\hline
			Step sizes      &$\dt=1/100$&$\dt=1/125$&\\
			\hline
			Approximation   & 1.0111015& 1.0112500&\\
			Numerical error & 0.0008254& 0.0006769&\\
			CPU(seconds)    &  56691.58&  49771.05&\\
			\hline
		\end{tabular}
		\caption{Numerical results of Example \ref{example2} where numerical results and errors are rounded off in 7 decimal places}\label{table2}
	\end{table}
	\begin{figure}
		\centering
		\includegraphics[width=0.6\textwidth]{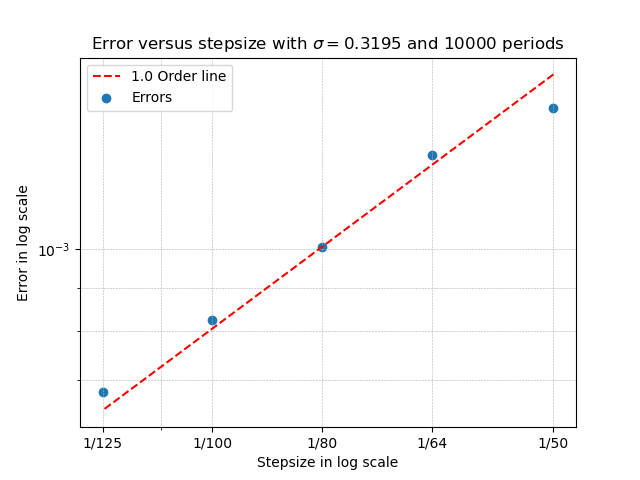}
		\caption{Error of approximation to average of periodic measure versus step size in log-log graph (Example \ref{example2})}\label{figure4} 
	\end{figure}

	 We also generate the periodic measure approximations from two paths each with 10000 periods and the step size $\dt=0.01$ under two different realisations of noise. The distributions of periodic measure are presented in Figure \ref{figure6}. One can see the distributions produced by two different Brownian motions are very similar. There are some minor differences due to insufficient amount of data. If we utilise sufficiently large amount of computations, the differences will eventually disappear. 
	\begin{figure}
		\centering
		\includegraphics[width=0.8\textwidth]{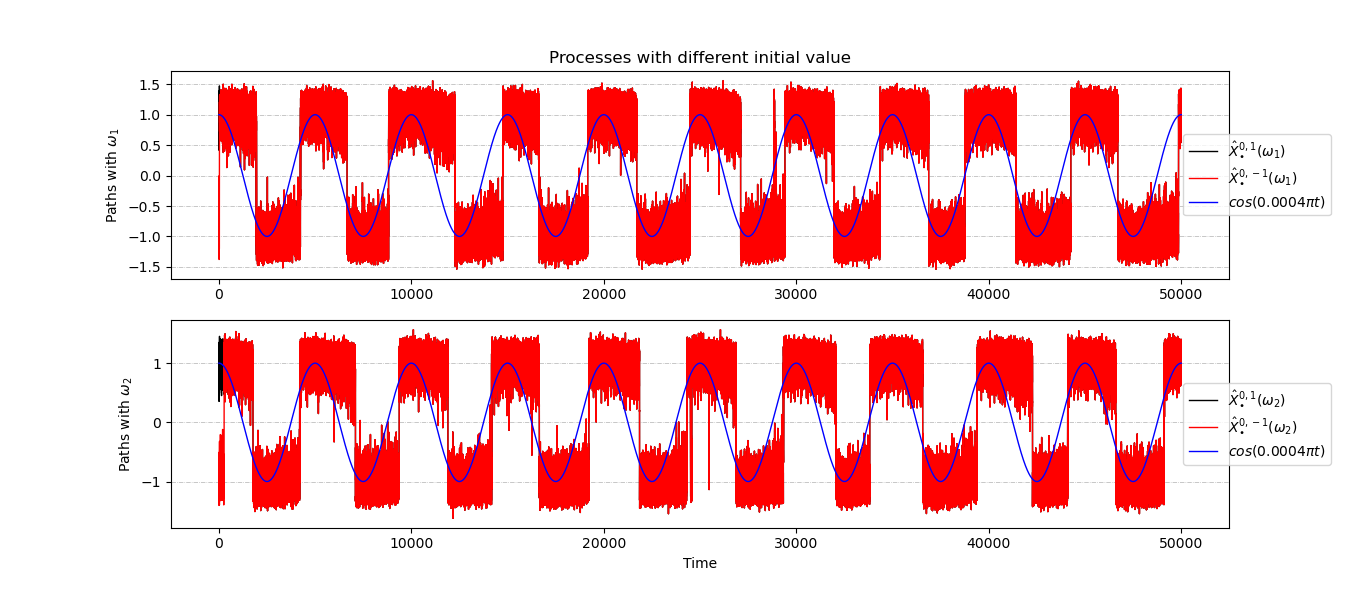}
		\caption{Paths of stochastic resonance model  (Example \ref{example2})}\label{figure5}
	\end{figure}
	
	We can apply time scaling on the model to rescale its period to a much smaller number. By doing this, the Lipschitz coefficient of the drift term will become very large. According to the upper bound of step size in Theorem \ref{theorem of main result}, the total cost of approximation will remain the same as the step size has to be very small. 


\end{example}

\section*{Acknowledgements} 
We would like to acknowledge the financial support of an EPSRC grant (ref. EP/S005293/2). We are very grateful to the referees for their constructive comments which led to significant improvements of this paper.

	\begin{figure}
		\centering
		\begin{subfigure}[b]{0.3\textwidth}
			\centering
			\includegraphics[width=\textwidth]{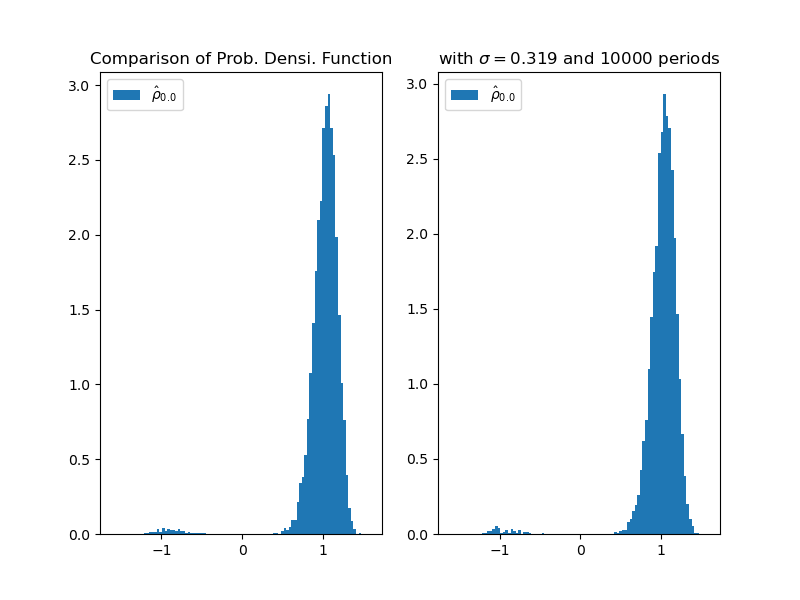}
			\caption{Distribution of $\hat\rho_0$}
			\label{fig:rho_0}
		\end{subfigure}
		\hfill
		\begin{subfigure}[b]{0.3\textwidth}
			\centering
			\includegraphics[width=\textwidth]{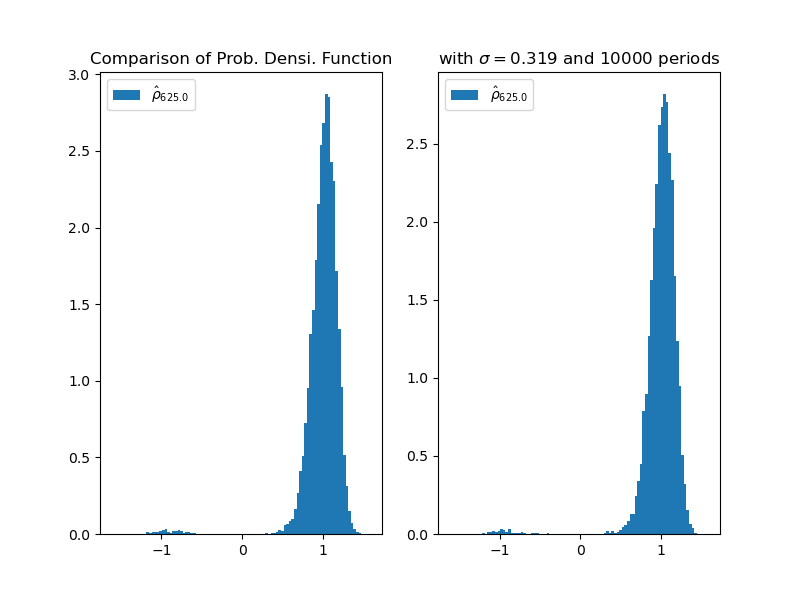}
			\caption{Distribution of $\hat\rho_{625}$}
			\label{fig:rho_625}
		\end{subfigure}
		\hfill
		\begin{subfigure}[b]{0.3\textwidth}
			\centering
			\includegraphics[width=\textwidth]{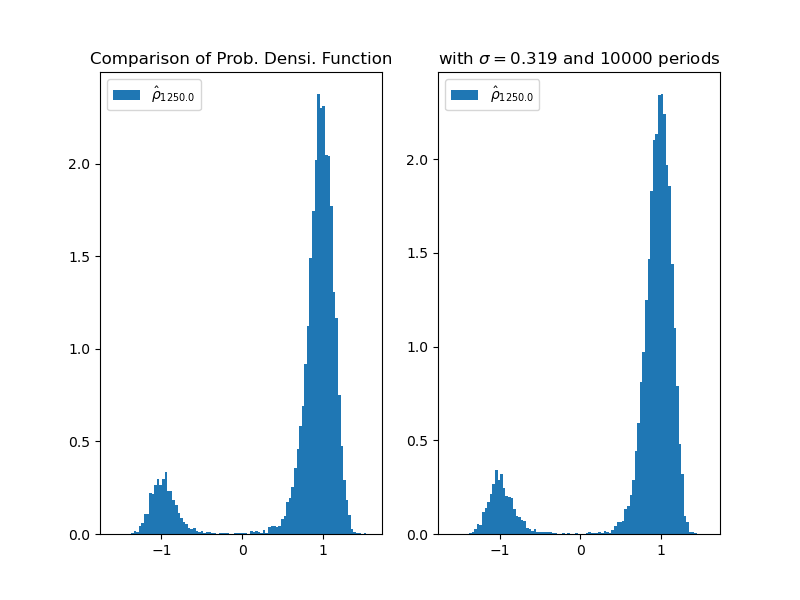}
			\caption{Distribution of $\hat\rho_{1250}$}
			\label{fig:rho_1250}
		\end{subfigure}
		\centering
		\begin{subfigure}[b]{0.3\textwidth}
			\centering
			\includegraphics[width=\textwidth]{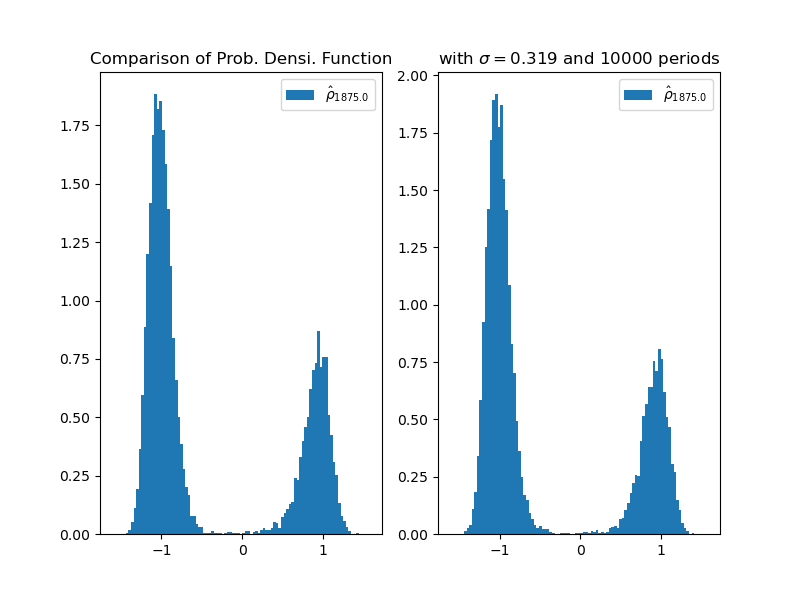}
			\caption{Distribution of $\hat\rho_{1875}$}
			\label{fig:rho_1875}
		\end{subfigure}
		\hfill
		\begin{subfigure}[b]{0.3\textwidth}
			\centering
			\includegraphics[width=\textwidth]{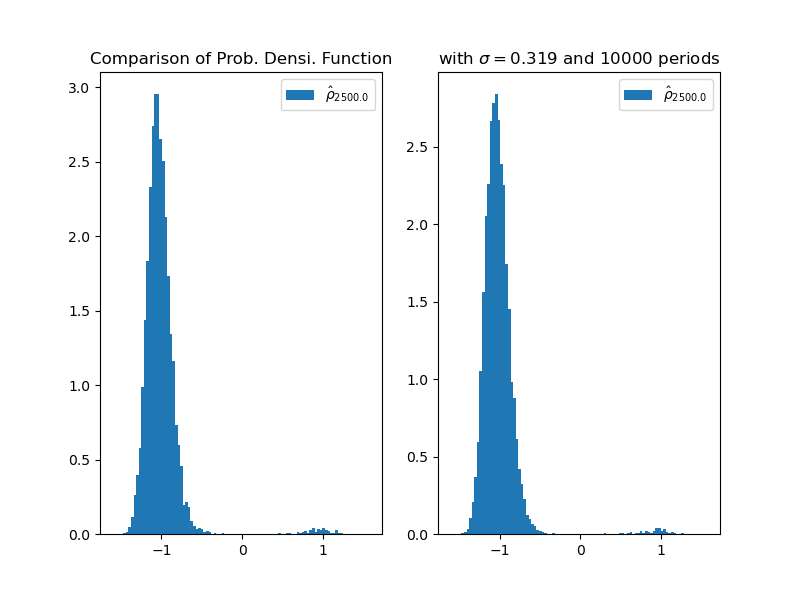}
			\caption{Distribution of $\hat\rho_{2500}$}
			\label{fig:rho_2500}
		\end{subfigure}
		\hfill
		\begin{subfigure}[b]{0.3\textwidth}
			\centering
			\includegraphics[width=\textwidth]{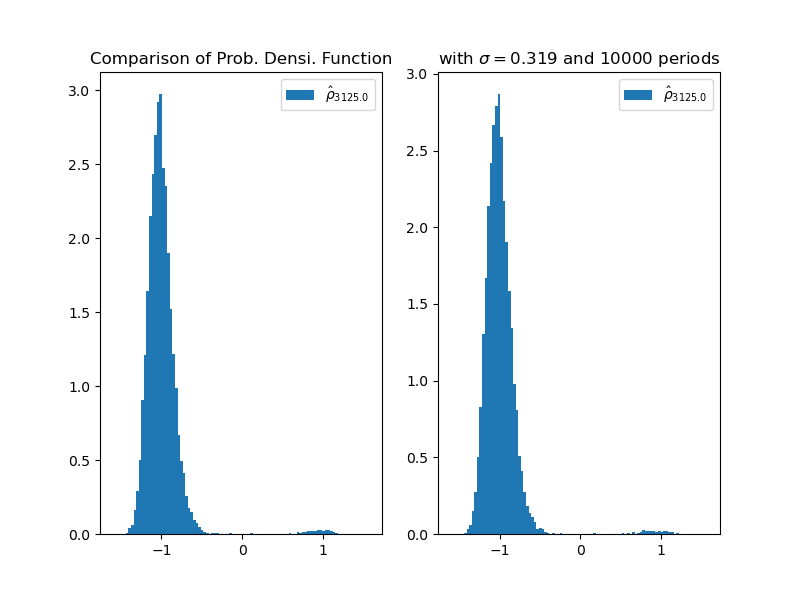}
			\caption{Distribution of $\hat\rho_{3125}$}
			\label{fig:rho_3125}
		\end{subfigure}
		\centering
		\begin{subfigure}[b]{0.3\textwidth}
			\centering
			\includegraphics[width=\textwidth]{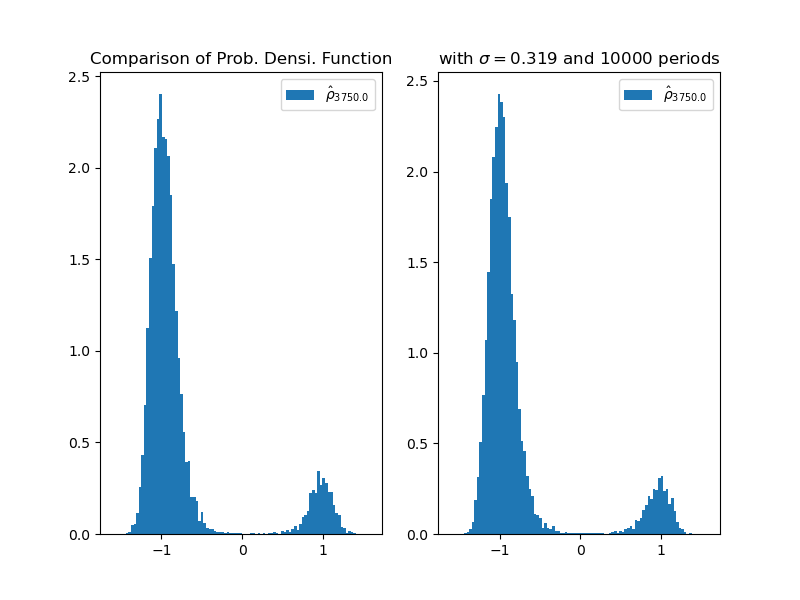}
			\caption{Distribution of $\hat\rho_{3750}$}
			\label{fig:rho_3750}
		\end{subfigure}
		\hfill
		\begin{subfigure}[b]{0.3\textwidth}
			\centering
			\includegraphics[width=\textwidth]{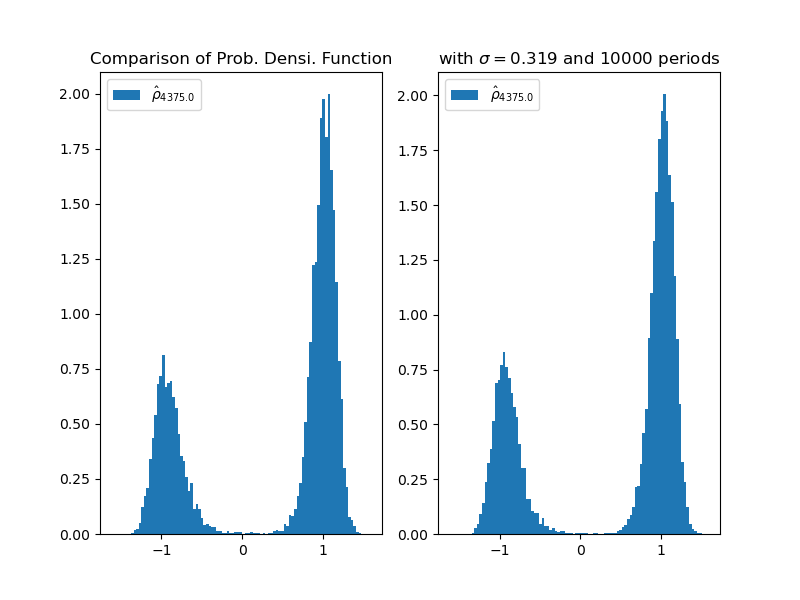}
			\caption{Distribution of $\hat\rho_{4375}$}
			\label{fig:rho_4375}
		\end{subfigure}
		\hfill
		\begin{subfigure}[b]{0.3\textwidth}
			\centering
			\includegraphics[width=\textwidth]{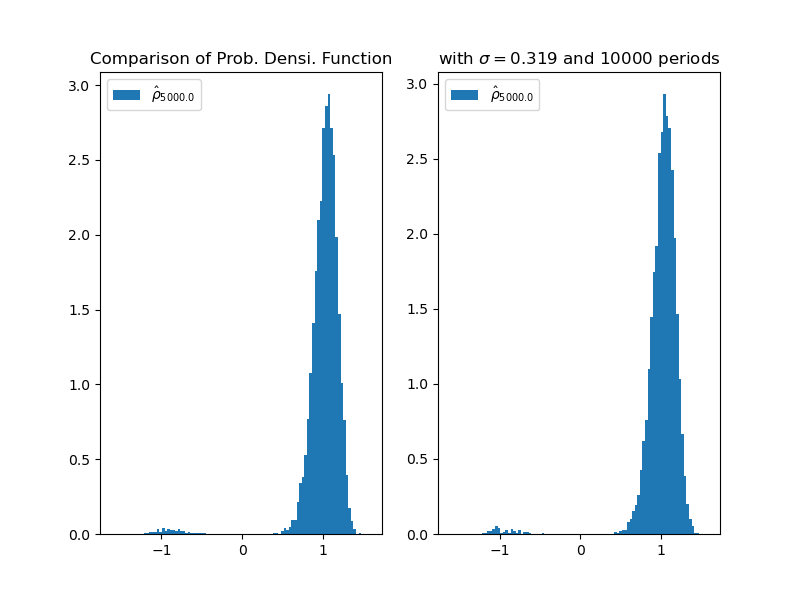}
			\caption{Distribution of $\hat\rho_{5000}$}
			\label{fig:rho_5000}
		\end{subfigure}
		\caption{Approximations of periodic measure with $\dt=0.01$ and 10000 periods (Example \ref{example2})}
		\label{figure6}
	\end{figure}
\end{document}